\newtheorem{theorem}{Theorem}[section]
\newtheorem{lemma}[theorem]{Lemma}
\newtheorem{proposition}[theorem]{Proposition}
\newtheorem{corollary}[theorem]{Corollary}
\theoremstyle{definition}
\newtheorem{definition}[theorem]{Definition}
\newenvironment{example}
  {\pushQED{\qed}  \examplex}
  {\popQED\endexamplex}
\theoremstyle{remark}
\newtheorem{remark}[theorem]{Remark}
\numberwithin{equation}{section}
\def\tr{\mathop\mathrm{tr}\nolimits}
\def\nor{\mathop\mathrm{nor}\nolimits}
\newcommand{\restric}[2]{\left.#1\right|_{#2}}
\def\beq{\begin{equation}}
\def\eeq{\end{equation}}
\begin{document}

\title[Spacelike submanifolds into null hypersurfaces]
{Codimension two spacelike submanifolds 
into null hypersurfaces  of generalized Robertson-Walker spacetimes}

%    Information for first author
\author{Luis J. Al\'\i as}
\address{Departamento de Matem\'{a}ticas, Universidad de Murcia, E-30100 Espinardo, Murcia, Spain}
\email{ljalias@um.es}

\author{Josu\'e Mel\'endez}
\address{Departamento de Matem\'{a}ticas, Universidad Aut\'onoma Metropolitana-Iztapalapa, CP 09340,  M\'exico.} 
\email{jms@xanum.uam.mx}

\author{Matias Navarro}
\address{
Facultad de Matem\'aticas, Universidad Aut\'onoma de Yucat\'an, Perif\'erico Norte, Tablaje 13615, M\'erida, Yucat\'an, M\'exico}
\email{matias.navarro@correo.uady.mx}

\author{Didier A. Solis}
\address{Facultad de Matem\'aticas, Universidad Aut\'onoma de Yucat\'an, Perif\'erico Norte, Tablaje 13615, M\'erida, Yucat\'an, M\'exico} 
\email{didier.solis@uady.mx}

\subjclass[2010]{53C40, 53C42}

%\date{\today}
\date{May 30, 2025}

\dedicatory{To the memory of No\'e Mel\'endez Hern\'andez}

\keywords{spacelike submanifolds; Lorentz-Minkowski spacetime; de Sitter spacetime; trapped submanifolds,  light cone, cylinder over light cone,  De Sitter spacetime}

\begin{abstract}
We study codimension two spacelike submanifolds contained into a general class of null hypersurfaces in generalized Robertson-Walker spacetimes, refer to as nullcones. In particular we analyze light cones and lightlike cylinders in Lorentz-Minkowski spacetime, as well as null cones in de Sitter spacetime. We give conditions on a radial coordinate that guarantee that such a spacelike submanifold is conformally diffeomorphic to the hyperbolic space, a round cylinder  a sphere; respectively. We also provide some non-existence results for weakly trapped submanifolds. 
\end{abstract}

\maketitle

\tableofcontents

\section{Introduction}\label{s1}

Lightlike hypersurfaces play a key role in general relativity, as they realize several geometric constructions and objects related to the causal structure of spacetime\footnote{That is to say, a connected and time oriented Lorentzian manifold $(\overline{M}^{n+2}, \langle\,,\rangle)$.}. Such is the case for event, Killing and Cauchy horizons, achronal boundaries, or the conformal boundary of asymptotically flat spacetimes \cite{CHB}. Due to the Lorentzian signature of spacetime, every lightlike hypersurface $S^{n+1}\subset \overline{M}^{n+2}$ carries a  $1$ dimensional distribution in which the metric degenerates $Rad (TS)=TS\cap TS^\perp$. Therefore, the ambient metric $\langle ,\rangle$ induces a Riemannian metric in any codimension 2 smooth manifold $\Sigma^n\subset S^{n+1}$ whose tangent bundle $TS$ is complementary to $Rad(TM)$ in $TM$. For globally hyperbolic spacetimes $(\overline{M}^{n+2}, \langle\,,\rangle)$, such manifolds arise naturally as cross sections of a horizon when intersected with a Cauchy surface. It is in these context where the concept of marginally trapped surface enters as a key hypothesis on the celebrated singularity theorems of Penrose \cite{BH,penrose}. Furthermore, some of the main accomplishments of black hole theory deal with the characterization of possible topological types of the cross sections of the event horizon, dating back to the seminal work of Hawking on the spherical topology of black hole horizons for stationary spacetimes \cite{ER,Galloway,Hawking}.

From the geometrical point of view, we say that a Riemannian manifold $(\Sigma^n,\langle ,\rangle)$ {\em factors through a lightlike hypersurface} $S^{n+1}$ if there is an isometric immersion $\psi :\Sigma\to \overline{M}$ such that $\psi (\Sigma )\subset S$.  Several results pertaining the rigidity of marginally trapped $\Sigma$ and their topological type have been established recently in different contexts, as Lorentz-Minkowski space \cite{ACR,PPR}, Lorentzian space forms \cite{ACR2,CFP}, Robertson Walker spacetimes \cite{ACC} and gravitational waves \cite{CPR}. 

 In \cite{GO}, the authors introduce the notion of {\em null cones} and use to it characterize  totally umbilical lightlike hypersurfaces in Robertson-Walker spacetimes (see also \cite{NPS2016} for related results). In this work we develop a framework in order to analyze the geometry of submanifolds $\Sigma$ that factor through a null cone. As applications we analyze the non compact case and show that under mild assumptions $\Sigma$ is conformally diffeomorphic to hyperbolic space. Moreover, we establish the non existence of weakly trapped $\Sigma$. We also study codimension two spacelike submanifolds of Lorentz-Minkowski space that factor through a lightlike cylinder and show they are conformally diffeomorphic to a spherical cylinder.
 
 This paper is organized as follows. In Section \ref{sec:prelim} we establish the notation that will be used throughout the paper and set up our general framework. Then, in Section \ref{sec:examples} we analyze the particular cases of spacetimes of the form $\overline{M}=-I\times M^{n+1}$ and $\overline{M}=-i\times_f\mathbb{R}^{n+1}$. This is further specialized to the cases of spacelike submanifolds that factor through the future component and the lightcone in Section \ref{sec:LC}, and  a cylinder over a lightcone in Section \ref{sec:cylinder}. As a last application, we study spacelike submanifolds that factor through nullcones in De Sitter space.

\section{Preliminaries}\label{sec:prelim}

\subsection{Lightlike hypersurfaces in generalized Robertson-Walker spacetimes}

Let $(M, \langle\,,\rangle_M)$ be a Riemannian manifold and $I$ an open interval of $\mathbb{R}$. We consider the ($n + 2$)-dimensional generalized Robertson-Walker (GRW) spacetime\footnote{Recall that a Robertson-Walker spacetime is a GRW spacetime where the fiber $M$ has constant sectional curvature.}
\[
\overline{M}^{n+2}=-I\times_f M^{n+1}, 
\]
endowed with the Lorentzian warped metric
 \[
 \langle\,,\rangle= -dt^2+f^2(t) \ \langle\,,\rangle_M.
 \]
Further, we choose on $\overline{M}$ the time-orientation given by the globally defined timelike unit vector field
\[
\partial_t = 
\restric{(\partial/\partial t)}{(t,x)} \in \mathfrak{X}(\overline{M}),
\]
for every $(t,x) \in \overline{M}$.

All lightlike hypersurfaces in GRW spacetimes can be realized as graphs of transnormal functions associated to a (signed) distance to a fixed submanifold \cite[Prop. 3.4]{NPS2016}. We consider here the particular case of a singleton. Thus,  we define the function
$F:\overline{M}^{n+2} \to \mathbb{R}$
by
\[
F(t,x)=-\left( \int_{t_0}^t \frac{ds}{f(s)} \right)^2 + r^2(x), 
\]
where $p_0=(t_0,x_0)\in \overline{M}$ and $r(x)=d_M(x_0,x)$ denotes the Riemannian distance function from the fixed point $x_0\in M$.  If $0$ is a regular value, we define the {\em future nullcone at} $p_0$ as the  hypersurface\footnote{Compare \eqref{eq:lightcones} to the local form of null cones as described in \cite[Prop. 3.1]{GO}.} 
\begin{equation}\label{eq:lightcones}
 \Lambda^+=\{(t,x)\in \overline{M}\,:\,F(t,x)=0, t>t_0\}.
\end{equation}

The gradient $\overline{\nabla}F \in \mathfrak{X}(\overline{M})$ of $F$ decomposes as  
\[
\overline{\nabla}F(t,x)=2\varphi(t) \ \partial_t + { U}(t,x)
\]
where ${ U}$ is the tangent component to the fiber $M$ and $\varphi$ is given by
\[
\varphi(t)
=
-\dfrac{1}{2}\langle \overline{\nabla}F, \partial_t\rangle=-\dfrac{1}{2}\dfrac{\partial F}{\partial t}(t,x)=\dfrac{1}{f(t)}\left( \int_{t_0}^t \dfrac{ds}{f(s)}\right). \]
Let ${ V}\in \mathfrak{X}(M)$, hence the lift ${\hat{V}} \in \mathfrak{L}(M) $ of ${ V}$ to $\overline{M}$ satisfies 
\[
{\hat{V}}(F)=\langle \overline{\nabla}F,{\hat{V}}\rangle =f^2(t)\langle { U},{ V} \rangle_M.
\]
On the other hand, 
\[
{ \hat{V}}(F)={V}(r^2(x)) =2r(x)\langle { V}, Dr(x) \rangle_M
\]
where $D$ denotes the  gradient operator on $M$. Consequently,
\[
2r(x) Dr(x) = f^2(t) { U},
\]
which implies
\[
{ U}(t,x)=\dfrac{2r(x)}{f^2(t)}Dr(x).
\]
Thus 
\begin{equation}\label{eq:gradF}
\frac{1}{2}\overline{\nabla}F(t,x)=\varphi(t)\partial_t+\dfrac{r(x)}{f^2(t)} Dr(x)=
\dfrac{1}{f(t)}\left( \int_{t_0}^t \dfrac{ds}{f(s)}\right) \partial_t + \dfrac{r(x)}{f^2(t)} Dr(x)
\end{equation}
and
\[
    \frac{1}{4}\langle \overline{\nabla}F, \overline{\nabla}F \rangle = -\dfrac{1}{f^2(t)}\left( \int_{t_0}^t \dfrac{ds}{f(s)}\right)^2 + \dfrac{r^2(x)}{f^4(t)}\langle Dr,Dr \rangle 
    =\dfrac{1}{f^2(t)}F(t,x). 
\]
Therefore,  
\begin{equation}\label{eq:null_lambda}
\langle \overline{\nabla}F(t,x), \overline{\nabla}F(t,x) \rangle = 0,
\quad \text{for all $(t,x)\in \Lambda^+$}. 
\end{equation}
Moreover,
\[
\langle \overline{\nabla}F, X \rangle = 0 \quad \text{for every   $X\in \mathfrak{X}(\Lambda^+)$} .
\]
It follows that $\overline{\nabla}F$ is a lightlike future-pointing vector field, and hence $\Lambda^+$ is a lightlike hypersurface with tangent space given by
\[
T_{(t,x)} \Lambda^+ = \{ {\bf v} \in T_{(t,x)} \overline{M}\,:\,\langle \overline{\nabla}F(t,x), {\bf v} \rangle =0 \},
 \qquad (t,x) \in \Lambda^+.
\]

\begin{example}\label{ex:L}
If $M=\mathbb{R}^{n+1}$ and $x_0=0$, we get  $\overline{M} =-I\times_f \mathbb{R}^{n+1}$. In this case,  $r(x)=\Vert x\Vert$ with $Dr(x)=\dfrac{x}{\Vert x\Vert}$. Hence, we have the expression 
\[
\frac{1}{2}\overline{\nabla}F(t,x)=\dfrac{1}{f(t)}\left( \int_{t_0}^t \dfrac{ds}{f(s)}\right) \ \partial_t + \dfrac{x}{f^2(t)},
\qquad t>t_0. 
\]
In particular,  taking $f=1$, $I=\mathbb{R}$ and $t_0=0$, we have $\overline{M}$ is nothing but the ($n+2$)-dimensional Lorentz-Minkowski spacetime $\mathbb{L}^{n+2}$ and $\Lambda^+$ is the future component of light cone,
\[
\Lambda^+= \{ 
(t,x) \in \mathbb{L}^{n+2} : -t^2+ \Vert x \Vert ^2 =0, \,t>0\}.\]
In this case,
\[
F(t,x)= -t^2 + \Vert x\Vert^2
\qquad
\text{and}
\qquad
 \frac{1}{2}\overline{\nabla}F(t,x)=t\, \partial_t + x.
\]
\end{example}

\subsection{Submanifolds that factor through a nullcone}

Let us consider an isometric immersion $\psi:\Sigma^n \to \overline{M}=-I\times_f M^{n+1}$  of an $n$-dimensional connected Riemannian manifold $\Sigma$ with $\psi (\Sigma^n)\subset \Lambda^+$. In that case, we will say the $\Sigma$ factors through the nullcone $\Lambda^+$. Hence
\[
\xi(p)=\frac{1}{2}\overline{\nabla}F(\psi(p))=\frac{1}{2}\overline{\nabla}F(t(p),x(p))
\]
is a smooth future lightlike vector field everywhere orthogonal to $\Sigma$.

Denote by $\pi_I:\overline{M} \to I$ the projection $\pi_I(t,x)=t$ and by $\pi_M:\overline{M} \to M$ the projection $\pi_M(t,x)=x$. We define the {\em timelike coordinate function} and the {\em spacelike coordinate function} of the immersion $\psi:\Sigma \to \overline{M}$ by
\[
u=\pi_I \circ \psi:\Sigma \to I
\]
and
\[
\phi=\pi_M \circ \psi:\Sigma \to M.
\]

Then the  lightlike vector field $\xi\in T^\perp (\Sigma)$ satisfies
\begin{equation}\label{eq:xid}
\xi=\dfrac{1}{f(u)}\left( \int_{t_0}^u \dfrac{ds}{f(s)}\right) \ \partial_t \mid_{(u,\phi)}+\dfrac{r(\phi)}{f^2(u)} Dr(\phi).
\end{equation}
Consider now the expression
\[
\partial_t=\partial_t^\top + \partial_t^\perp,
\]
where $\partial_t^\top\in \mathfrak{X}(\Sigma)$ and $\partial_t^\perp\in\mathfrak{X}^\perp(\Sigma)$. In particular, 
\[
\langle\partial_t^\perp, \partial_t^\perp\rangle=-1-\Vert \partial_t^\top \Vert^2 \leq -1<0.
\]
Note that 
\[
\nu=\dfrac{\partial_t^\perp}{\Vert \partial_t^\perp \Vert}= \frac{\partial_t^\perp}{\sqrt{1+\Vert \partial_t^\top \Vert^2}}
\]
determines a globally defined future-pointing unit timelike vector field along  $\Sigma$. Since
\[
\overline{\nabla}\pi_I=-\partial_t
\quad \text{and}
\quad 
u=\pi_I\circ \psi,
\]
we have that
\begin{equation}\label{eq:toppartialt}
\nabla u = (\overline{\nabla} \pi_I)^\top = -\partial_t^\top.
\end{equation}
It follows that
\[
\nu=\dfrac{\partial_t^\perp}{\sqrt{1+\Vert \partial_t^\top \Vert^2}}=\dfrac{1}{\sqrt{1+\Vert \nabla u \Vert^2}}\left( \partial_t - \partial_t^\top \right) = \dfrac{1}{\sqrt{1+\Vert \nabla u \Vert^2}}\left( \partial_t + \nabla u \right).
\]
Finally, let us define
\[
\eta = -\dfrac{1}{2\langle \xi, \nu \rangle^2} \xi - \dfrac{1}{\langle \xi, \nu \rangle} \nu.
\]
Then we have
\[
\langle \xi, \xi \rangle =  \langle \eta, \eta \rangle = 0, \quad \langle \xi, \eta \rangle = -1,
\]
and
\[
    \langle \xi, \nu \rangle = \langle \frac{1}{2} \overline{\nabla}F, \nu \rangle 
    = \dfrac{1}{2\sqrt{1+\Vert \nabla u \Vert^2}}\langle \overline{\nabla}F, \partial_t \rangle 
    = -\dfrac{1}{f(u)\sqrt{1+\Vert \nabla u \Vert^2}}\int_{t_0}^u \dfrac{ds}{f(s)}<0.
\]

The following result summarizes our findings.
\begin{proposition}\label{prop:null_frame}
Let $\psi:\Sigma^n \to  \overline{M}$ be a codimension two spacelike submanifold of $\overline{M}$ that factors through $\Lambda^+$. Then there exists a globally defined future-pointing normal  null frame $\{\xi, \eta \}$  given by
\begin{equation*}
\xi=\frac{1}{f(u)}\left( \int_{t_0}^u \frac{ds}{f(s)}\right)\partial_t + \dfrac{r(\phi)}{f^2(u)}Dr(\phi) 
\end{equation*}
and 
\begin{equation*}
\eta=-\dfrac{f^2(u)(1+\Vert \nabla u \Vert^2)}{2\left( \int_{t_0}^u \frac{ds}{f(s)} \right)^2}\xi + \dfrac{f(u)}{\int_{t_0}^u \frac{ds}{f(s)}}\partial_t^\perp,
\end{equation*}
and satisfies $\langle\xi, \eta\rangle=-1$. 
\end{proposition}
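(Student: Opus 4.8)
The statement is essentially an organized summary of the computations carried out above, so the plan is to assemble those pieces and check the three defining relations of a null frame. The conceptual starting point is that, since $\Sigma^n$ is spacelike of codimension two in the Lorentzian manifold $\overline{M}$, its normal bundle $T^\perp\Sigma$ is a rank two bundle of Lorentzian signature; hence at each point it contains exactly two null directions, and exhibiting explicit \emph{global} representatives of them, normalized so that their mutual product is $-1$, is precisely what the proposition asks for.

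For the first null field I would simply take $\xi=\tfrac12\overline{\nabla}F$ evaluated along $\psi$. By \eqref{eq:gradF} it has the asserted expression \eqref{eq:xid}; by \eqref{eq:null_lambda} it is lightlike on $\Lambda^+$; and since $\langle\overline{\nabla}F,X\rangle=0$ for every $X\in\mathfrak{X}(\Lambda^+)$ it is normal to $\Sigma$. It is future-pointing because its $\partial_t$-coefficient $\varphi(u)=f(u)^{-1}\int_{t_0}^u ds/f(s)$ is positive for $u>t_0$, so $\langle\xi,\partial_t\rangle=-\varphi(u)<0$. Nothing new is required here.

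To produce the second null field I would first isolate the future-pointing timelike normal $\nu=\partial_t^\perp/\sqrt{1+\|\partial_t^\top\|^2}$, using \eqref{eq:toppartialt} to write $\partial_t^\top=-\nabla u$, so that $\langle\nu,\nu\rangle=-1$. Then I would set $\eta=-\tfrac{1}{2\langle\xi,\nu\rangle^{2}}\xi-\tfrac{1}{\langle\xi,\nu\rangle}\nu$ and verify the frame relations by a short bilinear computation: writing $\eta=a\xi+b\nu$ with $a=-1/(2\langle\xi,\nu\rangle^{2})$ and $b=-1/\langle\xi,\nu\rangle$, and using $\langle\xi,\xi\rangle=0$ together with $\langle\nu,\nu\rangle=-1$, one gets $\langle\xi,\eta\rangle=b\langle\xi,\nu\rangle=-1$ and $\langle\eta,\eta\rangle=2ab\langle\xi,\nu\rangle+b^{2}\langle\nu,\nu\rangle=\langle\xi,\nu\rangle^{-2}-\langle\xi,\nu\rangle^{-2}=0$, while $\langle\xi,\xi\rangle=0$ is already known.

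Finally I would pass from this intrinsic description to the explicit formula in the statement by substituting the value of $\langle\xi,\nu\rangle=-\big(f(u)\sqrt{1+\|\nabla u\|^2}\big)^{-1}\int_{t_0}^u ds/f(s)$ computed above, together with $\nu=\partial_t^\perp/\sqrt{1+\|\nabla u\|^2}$. The coefficient $-1/(2\langle\xi,\nu\rangle^{2})$ of $\xi$ becomes $-f^2(u)(1+\|\nabla u\|^2)\big/\big(2(\int_{t_0}^u ds/f(s))^2\big)$, while $-\nu/\langle\xi,\nu\rangle$ becomes $\big(f(u)/\int_{t_0}^u ds/f(s)\big)\partial_t^\perp$ once the factor $\sqrt{1+\|\nabla u\|^2}$ cancels, reproducing the stated expression. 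The only point requiring care, and the place where the hypothesis $t>t_0$ defining $\Lambda^+$ is essential, is that $\langle\xi,\nu\rangle\neq0$, so that $\eta$ is well-defined and smooth on all of $\Sigma$; this holds because $\int_{t_0}^u ds/f(s)>0$ whenever $u>t_0$, which simultaneously guarantees that every denominator appearing in the two displayed formulas is nonvanishing.
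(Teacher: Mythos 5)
Your proposal is correct and follows essentially the same route as the paper: the paper's proof of this proposition is precisely the chain of computations preceding it (taking $\xi=\tfrac12\overline{\nabla}F$ along $\psi$, forming $\nu=\partial_t^\perp/\sqrt{1+\|\nabla u\|^2}$ via \eqref{eq:toppartialt}, defining $\eta=-\tfrac{1}{2\langle\xi,\nu\rangle^2}\xi-\tfrac{1}{\langle\xi,\nu\rangle}\nu$, and substituting the computed value of $\langle\xi,\nu\rangle$), which is exactly what you assemble. Your explicit remark that $\int_{t_0}^u ds/f(s)>0$ for $u>t_0$ keeps all denominators nonvanishing is a point the paper leaves implicit, but it is the same argument.
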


In particular, and as a direct application of Proposition \ref{prop:null_frame}, we  extend \cite[Prop. 3.1]{ACR}.

\begin{corollary}
\label{coro:prop:null_frame}
Let $\psi:\Sigma^n \to \Lambda^+ \subset \mathbb{L}^{n+2}$ be a codimension two spacelike submanifold which is contained in the future component of the light cone $\Lambda^+$ of the Lorentz-Minkowski spacetime. Let $u= -\langle \psi, {\bf e_{1}}\rangle$. Then,
\begin{align*}
\xi =\frac{1}{2}\overline{\nabla}F(\psi ) =  \psi
\qquad \text{and}
\qquad
\eta=-\dfrac{(1+\Vert \nabla u \Vert^2)}{2 u^2}\xi + \dfrac{1}{u} {\bf e_1}^\perp.   \end{align*}
\end{corollary}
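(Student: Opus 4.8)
The plan is to specialize Proposition \ref{prop:null_frame} to the Lorentz-Minkowski setting described in Example \ref{ex:L}, namely $f\equiv 1$, $I=\mathbb{R}$, $t_0=0$, $M=\mathbb{R}^{n+1}$ and $x_0=0$, and then to recognize each resulting expression in terms of the position vector. First I would pin down the coordinate $u$. Since $\partial_t$ is the constant field $\mathbf{e_1}$ and the metric is $-dt^2+\langle\,,\rangle_{\mathbb{R}^{n+1}}$, the timelike coordinate function of the immersion satisfies $u=\pi_I\circ\psi=t\circ\psi$, while $\langle\psi,\mathbf{e_1}\rangle=-t$ because of the sign of the timelike direction. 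Hence $u=-\langle\psi,\mathbf{e_1}\rangle$, which matches the hypothesis and lets me use $u$ and $t\circ\psi$ interchangeably.

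Next I would substitute into the formula for $\xi$. With $f\equiv 1$ the integral $\int_{t_0}^{u}ds/f(s)=\int_0^{u}ds=u$, and with $M=\mathbb{R}^{n+1}$, $x_0=0$ we have $r(\phi)=\Vert\phi\Vert$ and $Dr(\phi)=\phi/\Vert\phi\Vert$, as in Example \ref{ex:L}. The expression from Proposition \ref{prop:null_frame} then collapses to $\xi=u\,\partial_t+\phi$. The only genuinely conceptual point is to observe that, viewing $\mathbb{L}^{n+2}$ as a vector space with $\partial_t=\mathbf{e_1}$ a parallel field, the combination $u\,\partial_t+\phi=u\,\mathbf{e_1}+\phi$ is exactly the position vector $\psi$. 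This yields $\xi=\tfrac12\overline{\nabla}F(\psi)=\psi$, recovering the first identity together with the classical fact that the position field is normal and null along a submanifold of the light cone.

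Finally I would read off $\eta$ from the same proposition. Substituting $f\equiv 1$ and $\int_0^{u}ds=u$ into the stated formula gives
\[
\eta=-\dfrac{1+\Vert\nabla u\Vert^2}{2u^2}\,\xi+\dfrac{1}{u}\,\partial_t^\perp,
\]
and since $\partial_t=\mathbf{e_1}$ we have $\partial_t^\perp=\mathbf{e_1}^\perp$, producing precisely the claimed expression. The pairing $\langle\xi,\eta\rangle=-1$ and the nullity of both fields are then inherited directly from Proposition \ref{prop:null_frame}, so no further verification is needed.

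I would expect no real obstacle here: the corollary is essentially a direct substitution. The two points to state carefully are the sign convention that gives $u=-\langle\psi,\mathbf{e_1}\rangle$ and the identification of the frame vector $u\,\partial_t+\phi$ with the position vector $\psi$; both are immediate once $\mathbb{L}^{n+2}$ is treated as a vector space with $\partial_t=\mathbf{e_1}$ parallel, and I would remark on them explicitly rather than leave them implicit.
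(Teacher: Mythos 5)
Your proposal is correct and follows exactly the route the paper intends: the corollary is stated there as a direct application of Proposition \ref{prop:null_frame}, specialized via Example \ref{ex:L} ($f\equiv 1$, $t_0=0$, $M=\mathbb{R}^{n+1}$, $x_0=0$), so that the integral becomes $u$, $r\,Dr(\phi)=\phi$, and $\xi=u\,\partial_t+\phi=\psi$ since $\partial_t=\mathbf{e}_1$ is parallel. Your two explicit remarks (the sign convention giving $u=-\langle\psi,\mathbf{e}_1\rangle$ and the identification of $u\,\partial_t+\phi$ with the position vector) are precisely the points the paper leaves implicit, and both are handled correctly.
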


Recall that $\displaystyle{\varphi(t) = \frac{1}{f(t)} \int_{t_0}^t \frac{ds}{f(s)}}$ and $\displaystyle{\frac{1}{2}\overline{\nabla}F(t,x)=\varphi(t)\partial_t+\dfrac{r(x)}{f^2(t)} Dr(x)}$.
For a vector field $Z \in T{\overline{M}}$, consider   
\[
Z=-\langle Z, \partial_t \rangle  \partial_t + \hat{Z}, \qquad \hat{Z} = (\pi_M)_*(Z). 
\]
Thus, we get
\begin{align*}
\frac{1}{2}\overline{\nabla}_Z\overline{\nabla}F = -\frac{1}{2}\langle Z, \partial_t \rangle  \overline{\nabla}_{\partial_t}\overline{\nabla}F + \frac{1}{2}
\overline{\nabla}_{\hat{Z}}\overline{\nabla}F 
\end{align*}
where, 
$\overline{\nabla}$ denotes the Levi-Civita connection of $\overline{M}$. Since $\overline{\nabla}_{\partial_t}\partial_t=0$  and $\overline{\nabla}_{\partial_t}Dr=\cfrac{\partial_t(f)}{f}Dr$, we have
\begin{eqnarray}
\frac{1}{2}\overline{\nabla}_{\partial_t}\overline{\nabla}F&=&\varphi'(t) \partial_t +  \partial_t \left( \frac{r}{f^2(t)}\right) Dr + \frac{r}{f^2(t)}\overline{\nabla}_{\partial_t} Dr \label{a-part1}\\ 
&=&\left( -\frac{f'(t)}{f^2(t)} \left( \int_{t_0}^{t} \frac{ds}{f(s)} \right) + \frac{1}{f^2(t)} \right) \partial_t - \frac{rf'(t)}{f^3(t)}Dr.\nonumber
\end{eqnarray}
As $\hat{Z}$ is tangent to $M$ we also have
\[
\frac{1}{2}\overline{\nabla}_{\hat{Z}}\overline{\nabla}F  = \overline{\nabla}_{\hat{Z}}(\varphi(t)\partial_t)+\overline{\nabla}_{\hat{Z}}\left(\frac{1}{f^2(t)}rDr \right) 
= \varphi(t) \overline{\nabla}_{\hat{Z}}\partial_t + \frac{1}{f^2(t)} \overline{\nabla}_{\hat{Z}}(r Dr).
\]
Notice that
\begin{equation}
\overline{\nabla}_{\hat{Z}}\partial_t=\overline{\nabla}_{\partial_t}\hat{Z}=\frac{\partial_t(f)}{f} \hat{Z}=\frac{f'(t)}{f(t)} \hat{Z},
\end{equation}
and
\begin{eqnarray*}
\overline{\nabla}_{\hat{Z}}(r Dr)&=&\nor(\overline{\nabla}_{\hat{Z}}(rDr) ) + \tan(\overline{\nabla}_{\hat{Z}}  ( r Dr ) ) \\
&=&-\frac{\langle \hat{Z}, r Dr \rangle}{f(t)} \overline{\nabla}f + \nabla_{\hat{Z}}^M (r Dr) \\
&=&\frac{f'(t) r \langle \hat{Z}, Dr \rangle}{f(t)} \partial_t + \nabla_{\hat{Z}}^M(r Dr),
\end{eqnarray*}
where   $\nor v$ and $\tan v$ denote, respectively, the orthogonal and tangent  projections of $v \in T_{(t,x)} {\overline{M}}$ on $T_{(t,x)} (I \times \{ x\} )$, and $\overline{\nabla}f=-f'(t) \ \partial_t$.

As $\hat{Z}=Z + \langle Z, \partial_t \rangle \partial_t$, we then have 
\begin{eqnarray}\label{b-part1}
\frac{1}{2}\overline{\nabla}_{\hat{Z}}\overline{\nabla}F & = & \frac{\varphi(t)f'(t)}{f(t)} \hat{Z} + \frac{1}{f^2(t)}\left( \frac{1}{f(t)} f'(t) r(x) \langle \hat{Z}, Dr \rangle \partial_t + \nabla_{\hat{Z}}^M (r Dr)  \right) \nonumber \\
	&=&\frac{f'(t)}{f^2(t)}\left( \int_{t_0}^{t} \frac{ds}{f(s)}\right) \hat{Z} + \frac{f'(t) r}{f^3(t)} \langle Z, Dr \rangle \partial_t + \frac{1}{f^2(t)} \nabla_{\hat{Z}}^M (r Dr)\nonumber \\
	&=&\left( \frac{f'(t)}{f^2(t)}\langle Z, \partial_t \rangle \left(\int_{t_0}^{t}\frac{ds}{f(s)} \right) + \frac{f'(t) r}{f^3(t)} \langle Z, Dr \rangle \right) \partial_t \\
& &+\frac{f'(t)}{f^2(t)}\left( \int_{t_0}^{t} \frac{ds}{f(s)} \right)Z + \frac{1}{f^2(t)}\nabla_{\hat{Z}}^M(r Dr). \nonumber
\end{eqnarray}

Thus by combining \eqref{a-part1} and \eqref{b-part1} we obtain, for any $Z$ tangent to $\overline{M}$,
\begin{eqnarray}\label{nablaxi}
\frac{1}{2}\overline{\nabla}_{Z}\overline{\nabla}F & = & \frac{\langle Z, \partial_t \rangle}{f^2(t)}\left(2f'(t)\left( \int_{t_0}^{t} \frac{ds}{f(s)} \right) - 1 \right) \partial_t \nonumber \\
& &+\frac{f'(t)r}{f^3(t)}\left( \langle Z, Dr \rangle \partial_t + \langle Z, \partial_t \rangle Dr \right)  \\
& &+\frac{f'(t)}{f^2(t)}\left( \int_{t_0}^{t}\frac{ds}{f(s)} \right)Z + \frac{1}{f^2(t)} \nabla_{\hat{Z}}^M(r Dr). \nonumber
\end{eqnarray}

\section{The cases $\overline{M}=-I\times M^{n+1}$ and $\overline{M}=-I\times_fM^{n+1}$.}\label{sec:examples}

In this section we work out in detail the form of the shape operators $A_\xi$ and $A_\eta$ associated to $\Sigma$ and derive some general consequences for the cases of a Lorentzian product spacetime $\overline{M}=-I\times M^{n+1}$ and for a GRW spacetime of the form $\overline{M}=-I\times_fM^{n+1}$.

At this point, it is worth pointing out that throughout all this paper we are using for the second fundamental form $II$ and for the shape operator $A_\eta$ associated to a normal vector field $\eta$ the standard convention in General Relativty, and opposite to the one usually used in Differential Geometry. Therefore, for a general spacelike submanifold $\Sigma$ in an arbitrary Lorentzian ambient space $\overline M$, the Gauss and Weingarten formulas are given, respectively, by
\begin{equation}
\label{Gauss_luis}
\overline\nabla _XY=\nabla_XY-II(X,Y)
\end{equation}
and 
\begin{equation}
\label{Wein_luis}
\overline\nabla _X\eta=A_\eta X+\nabla^\perp_X\eta
\end{equation}
for every $X,Y\in T\Sigma$ and $\eta\in T^\perp\Sigma$. 

First we consider  the case  $f\equiv 1$  which corresponds to the immersion in the Cartesian product 
\[
\psi:\Sigma \to \Lambda^+\subset \overline{M}=-\mathbb{R}\times M^{n+1}.
\]
In this case, $F(t,x)$ is given simply by 
\[
F(t,x)=-t^2+r(x)^2
\]
and, choosing $t_0=0$, (\ref{eq:gradF}) becomes
\[
\frac{1}{2}\overline{\nabla}F(t,x)=t\partial_t+r(x)Dr(x).
\]
Then  \eqref{nablaxi} reduces to
\begin{equation}
\label{luis1}
    \frac{1}{2}\overline{\nabla}_Z\overline{\nabla}F=-\langle Z, \partial_t \rangle \partial_t + \nabla_{\hat{Z}}^M(r Dr),
\end{equation}
for every $Z$ tangent to $\overline{M}$. 
Letting $t_0=0$. from Proposition \ref{prop:null_frame} we obtain along the submanifold the null frame
\[
\xi=\frac{1}{2}\overline{\nabla}F(\psi)=u \partial_t + r Dr
\qquad
\text{and}
\qquad
\eta=-\frac{1+\Vert \nabla u \Vert^2}{2u^2}\xi+\frac{1}{u}\partial_t^{\perp},
\]
and from (\ref{eq:toppartialt}) and (\ref{luis1}) their corresponding shape operators are given by
\begin{equation*}
    A_\xi X=(\overline \nabla_X\xi)^{\top}=\frac{1}{2}(\overline{\nabla}_X\overline{\nabla}F)^\top=
    -\langle X, \nabla u\rangle \nabla u + (\nabla^M_{\hat{X}}(r Dr))^{\top},
\end{equation*}
and 
\begin{equation*}
A_\eta X
=
(\overline \nabla_X\eta)^{\top}
=
-\frac{1+\Vert \nabla u \Vert^2}{2u^2}A_\xi X+\frac{1}{u}A_{\partial_t^{\perp}}X,
\end{equation*}
for every tangent vector field $X\in T\Sigma$. 

We now proceed to compute $A_{\partial_t^\perp}$. This computation is independent from the Riemannian factor $M^{n+1}$ and from the warping function $f$, so that it can be done it its full generality for a GRW spacetime, as showed in the next result.
\begin{lemma}
	\label{lem:shape}
	Let $\psi:\Sigma^n \to\Lambda^{+}\subset\overline{M}=-I\times_fM^{n+1}$ be a codimension two spacelike submanifold that factors through the nullcone $\Lambda^{+}$. Then,
	\begin{equation*}
	A_{\partial_t^\perp} X
	=     
	\nabla_X \nabla u +
	\frac{f'(u)}{f(u)} \left( 
	X+ \langle X, \nabla u \rangle \nabla u \right),
	\qquad 
	X\in T\Sigma.    
	\end{equation*}
	In particular, $$\theta_{\partial_t^\perp} =\frac{1}{n} \tr A_{\partial_t^\perp} = 
	\frac{1}{n} \Delta u
	+
	\frac{f'(u)}{nf(u)}
	\left(n+\|\nabla u\|^2 \right).$$
\end{lemma}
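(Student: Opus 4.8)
The plan is to compute $A_{\partial_t^\perp}$ directly from the Weingarten formula \eqref{Wein_luis}, which in the General Relativity convention adopted here reads $A_{\partial_t^\perp}X=(\overline{\nabla}_X\partial_t^\perp)^\top$ for every $X\in T\Sigma$. The natural first move is to rewrite the normal field in terms of intrinsic data: since $\partial_t=\partial_t^\top+\partial_t^\perp$ and $\nabla u=-\partial_t^\top$ by \eqref{eq:toppartialt}, one gets $\partial_t^\perp=\partial_t+\nabla u$. Hence
\[
A_{\partial_t^\perp}X=(\overline{\nabla}_X\partial_t)^\top+(\overline{\nabla}_X\nabla u)^\top,
\]
and I would treat the two summands separately.

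For the first summand I would use the warped-product connection. Writing $X=-\langle X,\partial_t\rangle\partial_t+\hat{X}$ and recalling both $\overline{\nabla}_{\partial_t}\partial_t=0$ and the identity $\overline{\nabla}_{\hat{Z}}\partial_t=(f'/f)\hat{Z}$ established above, I obtain $\overline{\nabla}_X\partial_t=(f'(u)/f(u))\hat{X}$. The essential bookkeeping step is that, because $X$ is tangent to $\Sigma$, $\langle X,\partial_t\rangle=\langle X,\partial_t^\top\rangle=-\langle X,\nabla u\rangle$, so $\hat{X}=X+\langle X,\partial_t\rangle\partial_t=X-\langle X,\nabla u\rangle\partial_t$. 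Projecting onto $T\Sigma$ and using $\partial_t^\top=-\nabla u$ then yields $(\overline{\nabla}_X\partial_t)^\top=(f'(u)/f(u))(X+\langle X,\nabla u\rangle\nabla u)$.

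For the second summand I would invoke the Gauss formula \eqref{Gauss_luis}: since $\nabla u\in T\Sigma$, the decomposition $\overline{\nabla}_X\nabla u=\nabla_X\nabla u-II(X,\nabla u)$ shows that its tangential part is exactly the intrinsic Hessian $\nabla_X\nabla u$. Adding the two contributions produces the stated expression for $A_{\partial_t^\perp}X$.

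Finally, for the trace I would fix an orthonormal frame $\{e_i\}$ of $T\Sigma$. The operator $X\mapsto\nabla_X\nabla u$ is the Hessian of $u$, whose trace is $\Delta u$; the identity operator $X\mapsto X$ contributes $n=\dim\Sigma$; and the rank-one operator $X\mapsto\langle X,\nabla u\rangle\nabla u$ has trace $\sum_i\langle e_i,\nabla u\rangle^2=\Vert\nabla u\Vert^2$. Collecting these and dividing by $n$ gives the asserted formula for $\theta_{\partial_t^\perp}$. The computation is essentially routine; the only genuine subtlety, and the step most prone to sign errors, is the careful handling of the tangential projections, in particular translating $\langle X,\partial_t\rangle$ into $-\langle X,\nabla u\rangle$ and consistently using $\partial_t^\top=-\nabla u$ throughout.
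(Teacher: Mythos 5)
Your proof is correct and takes essentially the same route as the paper: both arguments rest on the warped-product identity $\overline{\nabla}_X\partial_t=\frac{f'(u)}{f(u)}\left(X+\langle X,\partial_t\rangle\partial_t\right)$, the relation $\partial_t^\top=-\nabla u$, and the Gauss formula identifying the tangential part of $\overline{\nabla}_X\nabla u$ with the intrinsic Hessian. The only difference is bookkeeping: the paper expands $\overline{\nabla}_X\partial_t$ in two ways (warped-product formula versus Gauss--Weingarten applied to $\partial_t^\top+\partial_t^\perp$) and equates tangent parts, whereas you compute $A_{\partial_t^\perp}X=\bigl(\overline{\nabla}_X(\partial_t+\nabla u)\bigr)^\top$ directly --- the same computation rearranged.
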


\begin{proof}
	By the standard formula for warped products
	\begin{equation}
	\label{eq:U}
	\overline{\nabla}_Z \partial_t = \frac{f'(t)}{f(t)}(Z+\langle Z,\partial_t\rangle\partial_t)
	\end{equation}
	for every $Z \in T\overline{M}$. In particular, if $X\in T\Sigma$ and using
	$\partial_t=\partial_t^\top+\partial_t^\perp=-\nabla u+\partial_t^\perp$, 
	this reduces to
	\[
	\overline{\nabla}_X \partial_t=
	\frac{f'(u)}{f(u)}(X+\langle X,\nabla u\rangle\nabla u)-\frac{f'(u)}{f(u)}\langle X,\nabla u\rangle\partial_t^\perp.
	\]
	On the other hand, using Gauss (\ref{Gauss_luis}) and Weingarten (\ref{Wein_luis}) formulas we have
	\begin{equation*} 
	\overline{\nabla}_X \partial_t  = 
	\overline{\nabla}_X\partial_t^\top+
	\overline{\nabla}_X\partial_t^\perp
	= 
	-\nabla_X\nabla u+II(X,\nabla u)
	+ A_{\partial_t^\perp} X
	+ \nabla_X^{\perp}\partial_t^\perp,
	\end{equation*}
	from which we conclude
	\begin{equation}
	\label{luis5}
	A_{\partial_t^\perp} X=\frac{f'(u)}{f(u)} \left( 
	X+ \langle X, \nabla u \rangle \nabla u \right)+\nabla_X\nabla u
	\end{equation}
	and
	\begin{equation}
	\label{luis6}
	\nabla_X^\perp\partial_t^\perp=-\frac{f'(u)}{f(u)}\langle X,\nabla u\rangle\partial_t^\perp-II(X,\nabla u).
	\end{equation}
	This completes the proof of our lemma.
\end{proof}
As a consequence, we obtain the following result.
\begin{proposition}
	\label{prop:shapeoperators_product}
	Let $\psi:\Sigma^n \to \Lambda^+\subset\overline{M}=-\mathbb{R}\times M^{n+1}$ be a codimension two spacelike submanifold of a Lorentzian product spacetime that factors through te nullcone. Then, for every $X\in T\Sigma$ we have
	\begin{equation*}
	A_\xi X =  -\langle X, \nabla u\rangle \nabla u + (\nabla^M_{\hat{X}}(r Dr))^{\top}
	\end{equation*}    
	and
	\begin{equation*}
	A_\eta X = \frac{1+\|\nabla u\|^2}{2u^2}\left(\langle X, \nabla u\rangle \nabla u - (\nabla^M_{\hat{X}}(r Dr))^{\top}\right)+\frac{1}{u}\nabla_X\nabla u
	\end{equation*}
	where $\hat{X}=X-\langle X,\nabla u\rangle\partial_t$.  
	%In particular,
	%\begin{equation*}
	%\theta_\xi = \frac{1}{n} \tr (A_\xi) = 
	%\end{equation*}
	%and
	%\begin{equation*}
	%\theta_\eta = \frac{1}{n} \tr (A_\eta)=
	%\end{equation*}
\end{proposition}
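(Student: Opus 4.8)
The plan is to assemble the statement from three facts already in hand: the explicit null frame $\{\xi,\eta\}$ produced by Proposition~\ref{prop:null_frame} (specialized to the product case $f\equiv 1$, $t_0=0$), the Hessian identity \eqref{luis1} for $\tfrac12\overline{\nabla}_Z\overline{\nabla}F$, and Lemma~\ref{lem:shape}. In the product case the frame reads $\xi=\tfrac12\overline{\nabla}F(\psi)=u\,\partial_t+rDr$ and $\eta=-\tfrac{1+\|\nabla u\|^2}{2u^2}\xi+\tfrac1u\partial_t^\perp$, so once I have $A_\xi$ and $A_{\partial_t^\perp}$ in hand, the operator $A_\eta$ will follow by linearity.

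First I would compute $A_\xi$. Since $\xi=\tfrac12\overline{\nabla}F(\psi)$ is a normal field, $A_\xi X=(\overline{\nabla}_X\xi)^\top=\tfrac12(\overline{\nabla}_X\overline{\nabla}F)^\top$, and I would feed $Z=X$ into \eqref{luis1}. The normal-bundle term $-\langle X,\partial_t\rangle\partial_t$ projects tangentially to $-\langle X,\partial_t\rangle\partial_t^\top$; using $\partial_t^\top=-\nabla u$ from \eqref{eq:toppartialt} together with $\langle X,\partial_t\rangle=\langle X,\partial_t^\top\rangle=-\langle X,\nabla u\rangle$, this equals $-\langle X,\nabla u\rangle\nabla u$. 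The remaining term contributes $(\nabla^M_{\hat X}(rDr))^\top$, where $\hat X=X+\langle X,\partial_t\rangle\partial_t=X-\langle X,\nabla u\rangle\partial_t$, yielding the claimed expression for $A_\xi X$.

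Next I would pass to $A_\eta$. Writing $\eta=a\xi+b\,\partial_t^\perp$ with scalar functions $a=-\tfrac{1+\|\nabla u\|^2}{2u^2}$ and $b=\tfrac1u$ on $\Sigma$, I differentiate, $\overline{\nabla}_X\eta=X(a)\,\xi+a\,\overline{\nabla}_X\xi+X(b)\,\partial_t^\perp+b\,\overline{\nabla}_X\partial_t^\perp$, and take the tangential projection. The one point deserving care is that both $\xi$ and $\partial_t^\perp$ are normal, so the scalar-derivative terms $X(a)\xi$ and $X(b)\partial_t^\perp$ drop out of the tangential part; hence $A_\eta X=a\,A_\xi X+b\,A_{\partial_t^\perp}X$. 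This is the only genuinely nontrivial reduction, and it is what makes the coefficients in the final formula appear cleanly.

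Finally I would invoke Lemma~\ref{lem:shape}: for $f\equiv 1$ the factor $f'/f$ vanishes, so $A_{\partial_t^\perp}X=\nabla_X\nabla u$. Substituting this together with the expression for $A_\xi X$ and the explicit $a,b$ gives
\[
A_\eta X=\frac{1+\|\nabla u\|^2}{2u^2}\Big(\langle X,\nabla u\rangle\nabla u-(\nabla^M_{\hat X}(rDr))^\top\Big)+\frac1u\nabla_X\nabla u,
\]
as claimed. I expect no real obstacle here: the whole argument is bookkeeping layered on top of Lemma~\ref{lem:shape} and \eqref{luis1}, the only subtlety being the observation that differentiating the frame coefficients contributes solely to the normal bundle and therefore leaves the shape operators unaffected.
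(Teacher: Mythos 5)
Your proposal is correct and follows essentially the same route as the paper: compute $A_\xi X=\tfrac12(\overline{\nabla}_X\overline{\nabla}F)^\top$ from \eqref{luis1} using $\partial_t^\top=-\nabla u$, decompose $A_\eta=-\tfrac{1+\|\nabla u\|^2}{2u^2}A_\xi+\tfrac1u A_{\partial_t^\perp}$ (the frame-coefficient derivatives being purely normal), and conclude with Lemma~\ref{lem:shape} specialized to $f\equiv 1$. This is exactly the chain of computations the paper assembles before and after Lemma~\ref{lem:shape}, so there is nothing to add.
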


As an immediate consequence, when $M^{n+1}=\mathbb{R}^{n+1}$ is the Euclidean space, using that $rDr(x)=x$ for every $x\in\mathbb{R}^{n+1}$ we get
\[
(\nabla^{\mathbb{R}^{n+1}}_{\hat{X}}(rDr))^\top=(\hat{X})^\top=X+\langle X, \nabla u \rangle\nabla u,
\]
and we find the Weingarten endomorphisms associated to the future light cone in Lorentz-Minkowski spacetime, in agreement with \cite[Prop. 3.2]{ACR}. 
\begin{corollary}
	\label{coro:prop:shapeoperators}
	Let $\psi:\Sigma^n \to \Lambda^+ \subset \mathbb{L}^{n+2}$ be a codimension two spacelike submanifold which factors through the future component of the light cone $\Lambda^+$ of the Lorentz-Minkowski spacetime. Let $u= -\langle \psi, {\bf e_{1}}\rangle$. Then,
	\begin{equation*}
	A_\xi  =   I
	\qquad
	\text{and}
	\qquad 
	A_\eta = 
	- \frac{(1+\|\nabla u\|^2)}{2u^2}I  
	+ 
	\frac{1}{u} 
	\nabla^2 u  
	\end{equation*}
	where $\nabla^2 u (X) =\nabla_X \nabla u$.
	In particular,
	\begin{equation*}
	\theta_\xi = \frac{1}{n} \tr (A_\xi)= 1
	\qquad
	\text{and}
	\qquad
	\theta_\eta
	=
	\frac{1}{n} \tr (A_\eta) 
	=
	\frac{-n(1+\|\nabla u\|^2) + 2u  \Delta u}{2 n u^2}.
	\end{equation*}
\end{corollary}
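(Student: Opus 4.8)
The plan is to obtain this corollary as a direct specialization of Proposition \ref{prop:shapeoperators_product} to the fiber $M^{n+1}=\mathbb{R}^{n+1}$, so that no new geometric computation is required beyond the Euclidean identity for the radial position field already recorded in the excerpt. First I would note that in the Lorentz--Minkowski setting (with $f\equiv 1$, $I=\mathbb{R}$, $t_0=0$ and ${\bf e_1}=\partial_t$) the timelike coordinate function of Section \ref{sec:prelim} is precisely $u=\pi_I\circ\psi=-\langle\psi,{\bf e_1}\rangle$, so the hypothesis of the corollary is consistent with the general framework and the null frame of Corollary \ref{coro:prop:null_frame} applies.

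Next I would invoke the Euclidean identity $rDr(x)=x$, which gives $\nabla^{\mathbb{R}^{n+1}}_{\hat{X}}(rDr)=\hat{X}$ for the position field. Projecting onto $T\Sigma$ and using $\partial_t^\top=-\nabla u$ from \eqref{eq:toppartialt} together with $\hat{X}=X-\langle X,\nabla u\rangle\partial_t$, one finds
\[
(\nabla^{\mathbb{R}^{n+1}}_{\hat{X}}(rDr))^\top=(\hat{X})^\top=X+\langle X,\nabla u\rangle\nabla u,
\]
which is exactly the identity stated immediately before the corollary. The decisive step is then a one-line substitution of this expression into the two formulas of Proposition \ref{prop:shapeoperators_product}. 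For $A_\xi$ the term $-\langle X,\nabla u\rangle\nabla u$ cancels against the $\langle X,\nabla u\rangle\nabla u$ coming from $(\hat{X})^\top$, leaving $A_\xi X=X$, i.e. $A_\xi=I$. For $A_\eta$ the same cancellation turns the bracketed expression $\langle X,\nabla u\rangle\nabla u-(\hat{X})^\top$ into $-X$, so that the first summand becomes $-\frac{1+\|\nabla u\|^2}{2u^2}X$ and the formula reduces to $A_\eta=-\frac{1+\|\nabla u\|^2}{2u^2}I+\frac{1}{u}\nabla^2 u$, with $\nabla^2 u(X)=\nabla_X\nabla u$.

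Finally I would compute the traces by linearity: $\tr A_\xi=\tr I=n$ gives $\theta_\xi=1$, while $\tr\nabla^2 u=\Delta u$ gives
\[
\theta_\eta=\frac{1}{n}\left(-\frac{n(1+\|\nabla u\|^2)}{2u^2}+\frac{\Delta u}{u}\right)=\frac{-n(1+\|\nabla u\|^2)+2u\Delta u}{2nu^2}.
\]
There is no genuine obstacle in this argument, since it is a corollary rather than an independent result; the only point deserving a line of verification is the tangential-projection identity for $rDr$, and its derivation reduces to the elementary fact that the covariant derivative of the position vector field $x$ on flat $\mathbb{R}^{n+1}$ along $\hat{X}$ equals $\hat{X}$ itself.
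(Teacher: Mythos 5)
Your proposal is correct and follows essentially the same route as the paper: the authors likewise obtain the corollary by specializing Proposition \ref{prop:shapeoperators_product} to $M^{n+1}=\mathbb{R}^{n+1}$, using $rDr(x)=x$ to get $(\nabla^{\mathbb{R}^{n+1}}_{\hat{X}}(rDr))^\top=X+\langle X,\nabla u\rangle\nabla u$, and then substituting and taking traces. All of your intermediate identities and cancellations match the paper's computation.
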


For our next application,  we let the manifold $M$ be the Euclidean space $\mathbb{R}^{n+1}$ and consider an arbitrary warping function $f:I\to \mathbb{R}^+$, that is to say, $\overline{M}=-I\times_f \mathbb{R}^{n+1}$. Choosing $x_0=0\in\mathbb{R}^{n+1}$, $r(x)=\|x\|^2$ and $F(t,x)$ is given by 
\[
F(t,x)=-\left( \int_{t_0}^t \dfrac{ds}{f(s)}\right)^2+\|x\|^2,
\]
with $r(x)Dr(x)=x$ and 
\[
\frac{1}{2}\overline{\nabla}F(t,x)=\dfrac{1}{f(t)}\left( \int_{t_0}^t \dfrac{ds}{f(s)}\right) \partial_t + \dfrac{1}{f^2(t)} x.
\]
In this case, 
\begin{eqnarray*}
	Dr &=&\frac{x}{r},\\
	\nabla^{\mathbb{R}^{n+1}}_{\hat{Z}}(rDr) &=& \nabla^{\mathbb{R}^{n+1}}_{\hat{Z}}(x)=\hat{Z} = Z+\langle Z, \partial_t \rangle \partial_t,
\end{eqnarray*}
and, after a direct computation, \eqref{nablaxi} takes the form
\begin{eqnarray}
	\label{luis4}
\nonumber	\frac{1}{2}\overline{\nabla}_Z\overline{\nabla}F & = & \frac{f'(t)}{f(t)} \left(   \left\langle  Z,  \overline{\nabla} F \right\rangle -  \frac{1}{f(t)^2}  \langle Z,x  \rangle \right) \partial_t \\
	{} & {} & + \frac{f'(t)}{f^3(t)} \langle Z, \partial_t \rangle x +\frac{1}{f^2(t)} \left( f'(t)\left( \int_{t_0}^t \frac{ds}{f(s)} \right) + 1 \right) Z.
\end{eqnarray}

Let $\psi:\Sigma\rightarrow\Lambda^{+}\subset \overline{M}=-I\times_f\mathbb{R}^{n+1}$ the spacelike immersion, with $\psi=(u,\phi)$, for which
\begin{equation}
\label{luis3}
F(\psi)=-\left( \int_{t_0}^u \dfrac{ds}{f(s)}\right)^2+\|\phi\|^2=0 \Leftrightarrow
\textcolor{black}{\|\phi\|=\int_{t_0}^u \dfrac{ds}{f(s)}>0}.
\end{equation}
Therefore, in this case,
\begin{equation}
\label{luis2}
\xi=\frac{1}{2}\overline\nabla F(\psi)=\dfrac{1}{f(u)}\left( \int_{t_0}^u \dfrac{ds}{f(s)}\right) \partial_t + \dfrac{1}{f^2(u)}\phi=
\frac{\|\phi\|}{f(u)}\partial_t+\frac{1}{f^2(u)}\phi
\end{equation}
and from (\ref{luis4}) 
\begin{eqnarray*}
	\overline\nabla_X\xi & = & \frac{1}{2}\overline{\nabla}_X\overline{\nabla}F=\frac{f'(u)}{f^3(u)} \left( -\langle X,\phi  \rangle\partial_t +\langle X, \partial_t \rangle \phi \right) \\
	{} & {} & +\frac{1}{f^2(u)} \left( f'(u)\left( \int_{t_0}^u \frac{ds}{f(s)} \right) + 1 \right) X.
\end{eqnarray*}
for every tangent vector field $X\in T\Sigma$, since \textcolor{black}{$\langle X,\overline{\nabla}F\rangle=0$}. In particular, from the Weingarten equation 
\begin{equation*}
\overline{\nabla}_{X} \xi 
= A_\xi X + \nabla^\perp_X \xi, \qquad X \in T\Sigma,
\end{equation*}
it immediately follows
\[
A_{\xi}X=(\overline\nabla_X\xi)^\top=\frac{f'(u)}{f^3(u)}
(-\langle X,\phi^\top\rangle\partial_t^\top+\langle X,\partial_t^\top\rangle\phi^\top)+\frac{1}{f^2(u)} \left( f'(u)\left( \int_{t_0}^u \frac{ds}{f(s)} \right) + 1 \right)X
\]
and
\[
\nabla^\perp_X{\xi}=(\overline\nabla_X\xi)^\perp=
\frac{f'(u)}{f^3(u)}
(-\langle X,\phi^\top\rangle\partial_t^\perp+\langle X,\partial_t^\top\rangle\phi^\perp).
\]
Since $\xi^\top=0$, from (\ref{luis2}) we get
\[
\phi^\top=-f(u)\|\phi\|\partial_t^\top=f(u)\|\phi\|\nabla u,
\]
so that, 
\[
-\langle X,\phi^\top\rangle\partial_t^\top+\langle X,\partial_t^\top\rangle\phi^\top=
f(u)\|\phi\|\langle X,\nabla u\rangle\nabla u-f(u)\|\phi\|\langle X,\nabla u\rangle\nabla u=0,
\]
which, jointly with (\ref{luis3}) gives
\[
A_{\xi}X=\frac{1}{f^2(u)} \left( f'(u)\left( \int_{t_0}^u \frac{ds}{f(s)} \right) + 1 \right)X=
\frac{1}{f^2(u)} \left( f'(u)\|\phi\|+ 1 \right)X
\]
for every $X\in T\Sigma$. 

On the other hand, in order to obtain the expression of $A_\eta$
we observe from Proposition \ref{prop:null_frame} and \ref{luis3} that
\begin{eqnarray*}
A_\eta & = & -\dfrac{f^2(u)(1+\Vert \nabla u \Vert^2)}{2\left( \int_{t_0}^u \frac{ds}{f(s)} \right)^2}A_\xi + \dfrac{f(u)}{\int_{t_0}^u \frac{ds}{f(s)}}A_{\partial_t^\perp}\\
{} & = & -\dfrac{f^2(u)(1+\Vert \nabla u \Vert^2)}{2\|\phi\|^2}A_\xi + \dfrac{f(u)}{\|\phi\|}A_{\partial_t^\perp},
\end{eqnarray*}
where $\textcolor{black}{A_{\partial^\perp_t}}$ is given by Lemma \ref{lem:shape}. As a consequence, we can give the following result.
\begin{proposition}
\label{prop:shapeoperators}
Let $\psi:\Sigma^n \to \Lambda^+ \subset -I\times_f \mathbb{R}^{n+1}$ be a codimension two spacelike submanifold of $\overline{M}$ that factors through the nullcone. Then, for every $X\in T\Sigma$
\begin{equation*}
A_\xi X =  \frac{f'(u)\|\phi\|+1}{f^2(u)} X
\end{equation*}    
and
\[
A_\eta X=-\left(\frac{1+\|\nabla u\|^2}{2\|\phi\|^2}+\frac{f'(u)(\|\nabla u\|^2-1)}{2\|\phi\|}\right)X
+ \frac{f'(u)}{\|\phi\|}\langle X, \nabla u \rangle \nabla u
+ \frac{f(u)}{\|\phi\|} \nabla_X \nabla u,
\]
where $\|\phi\|=\int_{t_0}^u \frac{ds}{f(s)}$, $u>t_0$. In particular,
\begin{equation*}
\theta_\xi = \frac{1}{n} \tr (A_\xi) = \frac{f'(u)\|\phi\|+1}{ f^2(u)}    
\end{equation*}
and
\begin{equation*}
\theta_\eta=\frac{1}{n} \tr (A_\eta) 
=\frac{-n(1+\|\nabla u\|^2)+2f(u)\|\phi\|\Delta u}{2n\|\phi\|^2}+
\frac{f'(u)}{2n\|\phi\|}(n-(n-2)\|\nabla u\|^2).
\end{equation*}
\end{proposition}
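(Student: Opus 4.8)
The plan is to assemble the stated formulas from three ingredients already available in the excerpt: the expression for $A_\xi$ derived in the paragraph immediately preceding the statement, the decomposition $A_\eta = -\frac{f^2(u)(1+\|\nabla u\|^2)}{2\|\phi\|^2}A_\xi + \frac{f(u)}{\|\phi\|}A_{\partial_t^\perp}$ that follows from the null frame of Proposition \ref{prop:null_frame}, and the formula for $A_{\partial_t^\perp}$ supplied by Lemma \ref{lem:shape}. No further geometric input is needed; the argument is a substitution followed by an algebraic simplification and a trace computation.

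First I would record that the computation preceding the proposition already gives $A_\xi X = \frac{f'(u)\|\phi\|+1}{f^2(u)}X$, the key point being that the rank-one piece $-\langle X,\phi^\top\rangle\partial_t^\top + \langle X,\partial_t^\top\rangle\phi^\top$ cancels once the identity $\phi^\top = f(u)\|\phi\|\nabla u$ (coming from $\xi^\top=0$) is inserted. Since $A_\xi$ is then a scalar multiple of the identity, $\theta_\xi = \frac{1}{n}\tr A_\xi$ is read off at once.

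For $A_\eta$ I would substitute this $A_\xi$ together with Lemma \ref{lem:shape} into the decomposition above. The first summand becomes $-\frac{(1+\|\nabla u\|^2)(f'(u)\|\phi\|+1)}{2\|\phi\|^2}X$, while the second yields $\frac{f(u)}{\|\phi\|}\nabla_X\nabla u + \frac{f'(u)}{\|\phi\|}\bigl(X + \langle X,\nabla u\rangle\nabla u\bigr)$. Collecting the coefficients of $X$ and using $\frac{f'(u)}{\|\phi\|}\bigl(1 - \tfrac{1+\|\nabla u\|^2}{2}\bigr) = -\frac{f'(u)(\|\nabla u\|^2-1)}{2\|\phi\|}$ reproduces the stated identity-part coefficient, while the $\langle X,\nabla u\rangle\nabla u$ and $\nabla_X\nabla u$ terms carry over directly.

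Finally, taking traces I would use $\tr I = n$, $\tr\bigl(X\mapsto\langle X,\nabla u\rangle\nabla u\bigr) = \|\nabla u\|^2$, and $\tr(\nabla^2 u) = \Delta u$. The one point requiring care is the compaction of $\theta_\eta$: the $\|\nabla u\|^2$ contributions arise from two distinct sources — the identity-part coefficient and the trace of the rank-one term — and combining them via $\bigl(-\tfrac{1}{2}+\tfrac{1}{n}\bigr)\|\nabla u\|^2 = -\tfrac{n-2}{2n}\|\nabla u\|^2$ is precisely what produces the factor $n - (n-2)\|\nabla u\|^2$ in the stated form. This bookkeeping is the only real obstacle; everything else is routine substitution.
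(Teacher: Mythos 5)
Your proposal is correct and follows essentially the same route as the paper: the paper's proof is precisely the chain of computations preceding the statement (the cancellation of the rank-one term in $\overline\nabla_X\xi$ via $\phi^\top = f(u)\|\phi\|\nabla u$, the decomposition $A_\eta = -\tfrac{f^2(u)(1+\|\nabla u\|^2)}{2\|\phi\|^2}A_\xi + \tfrac{f(u)}{\|\phi\|}A_{\partial_t^\perp}$, and Lemma \ref{lem:shape}), which you substitute and simplify exactly as the authors do. Your algebraic bookkeeping, including the combination $\bigl(-\tfrac{1}{2}+\tfrac{1}{n}\bigr)\|\nabla u\|^2 = -\tfrac{n-2}{2n}\|\nabla u\|^2$ yielding the factor $n-(n-2)\|\nabla u\|^2$, checks out.
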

The proof of Proposition \ref{prop:shapeoperators} follows directly from our previous computations.

\section{Submanifolds that factor through the  light cone}\label{sec:LC}

In this section we establish some results pertaining complete codimension $2$ spacelike submanifolds that factor through the future lightcone \textcolor{black}{of the Lorentz-Minkowski space} that are conformally diffeomorphic to the hyperbolic space. Furthermore, we study some obstructions to the existence of (weakly) trapped $\Sigma^n$ for $2\le n\le 4$. Our analysis is based on the growth of a radial function, and complements previous works that analyze the compact case via a height function (see \cite{ACR}).

We denote by $\mathbb{L}^{n+2}$ the ($n + 2$)-dimensional Lorentz-Minkowski spacetime, and $\Lambda^+$ the future component of the light cone, as described in Example \ref{ex:L}.  We recall that the $n$-dimensional hyperbolic space $\mathbb{H}^n$ is the complete simply connected Riemannian manifold with sectional curvature $-1$,
\[
\mathbb{H}^n=\{x \in \mathbb{L}^{n+1} \,| \, \langle x, x\rangle =-1, \, x_1>0 \}.
\]

First, we state a couple of technical Lemmas. The following result was proved in  \cite[Lemma 5.1]{ACR} (see also \cite[Lemma 5.2]{AM})

\begin{lemma}
\label{lem:complete}
Let $g$ be a complete metric on a Riemannian manifold $\Sigma$ and let $r$ denote the Riemannian distance function from a fixed origin $o\in \Sigma$. If a function $\lambda \in C^{\infty}(\Sigma)$ satisfies 
\beq
\label{hypLem}
\lambda^{2/(n-2)}(p)  \geq \frac{C }{ r(p) \log(r(p))} , \qquad r(p)\gg1,
\eeq 
$C$ a positive constant, then the conformal metric $\tilde{g}=\lambda^{4/(n-2)} g$ is also complete.
\end{lemma}

Moreover, in \cite[Chap. VIII, Lemma 8.1]{KN} we find the next auxiliary lemma:
\begin{lemma} 
\label{lem:coveringmap}
Let $f$ be a map from a connected complete Riemannian manifold $\Sigma$ onto another connected Riemannian manifold $M$ of the same dimension. If $f$ increases the distance, that is, 
\[
\| {\bf v} \| \leq \| df_p ({\bf v})\|
\]
for all $p\in \Sigma$ and ${\bf v} \in T_p \Sigma$.
Then $f$ is a covering map and $M$ is also complete.  
\end{lemma}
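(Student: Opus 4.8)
The plan is to prove this via the classical geodesic path-lifting argument for distance-increasing local diffeomorphisms. First I would observe that the infinitesimal hypothesis $\|{\bf v}\| \le \|df_p({\bf v})\|$ forces $df_p$ to be injective at every $p$; since $\dim \Sigma = \dim M$, each $df_p$ is then a linear isomorphism, so by the inverse function theorem $f$ is a local diffeomorphism, and in particular an open map. The whole proof then reduces to upgrading this local diffeomorphism to a covering map, and the engine for that is a path-lifting property for geodesics.

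The key lemma I would establish is: for every geodesic $\gamma:[0,\ell]\to M$ and every $p\in f^{-1}(\gamma(0))$, there is a unique lift $\tilde\gamma:[0,\ell]\to\Sigma$ with $f\circ\tilde\gamma=\gamma$ and $\tilde\gamma(0)=p$. Uniqueness is automatic since $f$ is a local diffeomorphism. For existence, I would lift $\gamma$ locally starting at $p$ and let $[0,a)$ be the maximal subinterval on which a lift $\tilde\gamma$ exists. Differentiating $f\circ\tilde\gamma=\gamma$ gives $df(\tilde\gamma'(t))=\gamma'(t)$, so the distance-increasing hypothesis yields $\|\tilde\gamma'(t)\|\le\|\gamma'(t)\|$; hence the lift has finite length, bounded by $L(\gamma)$. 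Consequently $\tilde\gamma(t)$ is Cauchy as $t\to a^-$, and completeness of $\Sigma$ guarantees it converges to a point of $\Sigma$. Applying the local diffeomorphism property at that limit point lets me continue the lift, contradicting maximality unless $a=\ell$. I expect this extension step to be the main obstacle, as it is precisely where both hypotheses---distance-increasing (to bound the length of the lift) and completeness of $\Sigma$ (to make the bounded-length lift converge)---are simultaneously essential.

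With geodesic lifting in hand, I would show every point $q\in M$ has an evenly covered neighborhood. Fix a normal ball $V=B(q,\epsilon)$ on which the radial geodesics from $q$ are defined, and for each $p\in f^{-1}(q)$ define a section $\Phi_p:V\to\Sigma$ by sending $y\in V$ to the endpoint of the lift, starting at $p$, of the radial geodesic from $q$ to $y$. Using uniqueness of lifts and the local diffeomorphism property one checks that $f\circ\Phi_p=\mathrm{id}_V$, that each $\Phi_p(V)$ is open, and that distinct $p$ give disjoint images (run the two lifts backward from a common point and invoke uniqueness). Finally, lifting the reversed radial geodesic from an arbitrary point of $f^{-1}(V)$ shows $f^{-1}(V)=\bigsqcup_{p\in f^{-1}(q)}\Phi_p(V)$, with $f$ restricting to a diffeomorphism on each sheet; thus $V$ is evenly covered and $f$ is a covering map.

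For the final assertion that $M$ is complete, I would verify geodesic completeness and invoke Hopf--Rinow. Let $\gamma:[0,a)\to M$ be a maximal unit-speed geodesic; since $f$ is onto, $\gamma(0)=f(p)$ for some $p$, and the lifting lemma produces $\tilde\gamma$ on $[0,a)$ with $\|\tilde\gamma'\|\le 1$, so $\tilde\gamma$ has length at most $a$. If $a<\infty$ the lift converges as $t\to a^-$ by completeness of $\Sigma$, whence $\gamma=f\circ\tilde\gamma$ extends continuously to $t=a$ and then smoothly as a geodesic beyond $a$, contradicting maximality. Hence $a=\infty$, every geodesic of $M$ is complete, and Hopf--Rinow finishes the proof.
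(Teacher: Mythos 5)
The paper gives no proof of this lemma: it is quoted directly from Kobayashi--Nomizu \cite[Chap.~VIII, Lemma 8.1]{KN}, and your argument is essentially the classical proof from that source --- geodesic lifting via the bound $\|\tilde\gamma'(t)\|\le\|\gamma'(t)\|$ combined with completeness of $\Sigma$, evenly covered normal balls built from lifts of radial geodesics, and geodesic extension plus Hopf--Rinow for the completeness of $M$. The proposal is correct; the only step a complete write-up would need to expand is the continuity and openness of the sections $\Phi_p$, which follows from the standard finite-chain argument along the lifted radial geodesic, using exactly the two ingredients you name (uniqueness of lifts and the local-diffeomorphism property).
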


\textcolor{black}{Let $\psi:\Sigma^n \to\Lambda^+\subset \mathbb{L}^{n+2}$ be a codimension two spacelike submanifold that factors through the future lightcone $ \Lambda^+$. Under the existence of a non-vanishing spacelike coordinate function, our first main result provides appropriate bounds on the growth of that function that guarantees that $\Sigma$ is conformally diffeomorphic to the hyperbolic space.}

\begin{theorem}\label{thm:02}
Let $\psi:\Sigma^n \to\Lambda^+\subset \mathbb{L}^{n+2}$ be a codimension two  complete spacelike submanifold that factors through the future lightcone $ \Lambda^+$. Assume that  \textcolor{black}{there exists a non-vanishing spacelike coordinate} function $w=\psi_i=\langle \psi, {\bf e}_{i}\rangle$. with $2\leq i\leq n+2$, satisfying 
\begin{equation} \label{hyp:1bis}
0<w(p) \leq C r(p) \log(r(p)), \qquad r(p)\gg1 ,
\end{equation}
where $C$ is a positive constant and $r$ denotes the Riemannian distance function from a fixed origin $o \in \Sigma$. Then 
$\Sigma$ is conformally diffeomorphic to the hyperbolic space $\mathbb{H}^{n}$.
\end{theorem}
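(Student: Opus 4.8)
The plan is to build an explicit map from $\Sigma$ into an affine copy of $\mathbb{H}^n$ sitting inside the light cone, show it is a conformal local diffeomorphism with an explicit conformal factor, and then upgrade it to a global conformal diffeomorphism by means of the two preparatory lemmas.

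First I would exploit that, by Corollary \ref{coro:prop:null_frame}, the normal null field is the position vector itself, $\xi=\psi$, so that $\psi$ is null, $\langle\psi,\psi\rangle=0$, and orthogonal to $\Sigma$, i.e. $\langle\psi,X\rangle=0$ for every $X\in T\Sigma$. Fixing the nonvanishing spacelike coordinate $w=\langle\psi,\mathbf{e}_i\rangle>0$, I define $\Phi=\psi/w$. Since $\langle\Phi,\Phi\rangle=\langle\psi,\psi\rangle/w^2=0$ and $\langle\Phi,\mathbf{e}_i\rangle=1$, the image lies in the affine slice $\{\,y:\langle y,y\rangle=0,\ \langle y,\mathbf{e}_i\rangle=1\,\}=\mathbf{e}_i+\mathbb{H}^n$, which, writing $y=\mathbf{e}_i+z$ with $z\in\mathbf{e}_i^\perp\cong\mathbb{L}^{n+1}$, is precisely the future hyperboloid $\langle z,z\rangle=-1$, an isometric copy of $\mathbb{H}^n$. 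Thus $\Phi$ is a well-defined map $\Sigma\to\mathbb{H}^n$.

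Next I would compute the pullback metric. For $X\in T_p\Sigma$ one gets $d\Phi(X)=\tfrac{1}{w}X-\tfrac{\langle\mathbf{e}_i,X\rangle}{w^2}\psi$, and because $\langle X,\psi\rangle=0$ and $\langle\psi,\psi\rangle=0$ all the cross terms drop out, leaving $\langle d\Phi(X),d\Phi(Y)\rangle=\tfrac{1}{w^2}\langle X,Y\rangle$. Hence $\Phi^\ast\langle\,,\rangle_{\mathbb{H}^n}=w^{-2}g$, so $\Phi$ is a conformal immersion of $(\Sigma,g)$ and, equivalently, a local isometry from $(\Sigma,\tilde g)$ to $\mathbb{H}^n$, where $\tilde g:=w^{-2}g$; in particular $\Phi$ is a local diffeomorphism.

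Finally I would promote this to a global conformal diffeomorphism. Setting $\lambda:=w^{-(n-2)/2}$ gives $\tilde g=\lambda^{4/(n-2)}g$ with $\lambda^{2/(n-2)}=1/w$, so the growth bound \eqref{hyp:1bis}, namely $0<w\le C\,r\log r$, is exactly the hypothesis \eqref{hypLem} of Lemma \ref{lem:complete}; therefore $\tilde g$ is complete. Since $\Phi:(\Sigma,\tilde g)\to\mathbb{H}^n$ is a distance-preserving, hence distance-increasing, local isometry out of a complete manifold, Lemma \ref{lem:coveringmap} shows it is a covering map; as $\mathbb{H}^n$ is simply connected, $\Phi$ is a diffeomorphism, and being conformal it is a conformal diffeomorphism $(\Sigma,g)\cong\mathbb{H}^n$. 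The step I expect to require the most care is the metric computation, since the conclusion hinges entirely on $\psi$ being simultaneously null and normal; one must also check that the constant in \eqref{hyp:1bis} matches the one demanded by Lemma \ref{lem:complete} and that the argument is carried out for $n\ge 3$, the range in which the exponent $4/(n-2)$ is meaningful.
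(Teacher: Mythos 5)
Your proposal is correct and follows essentially the same route as the paper: your map $\Phi=\psi/w$ is exactly the paper's $\Psi=(\psi_1/w,\ldots,\psi_{n+1}/w)$ under the obvious identification of the affine slice $\{y:\langle y,y\rangle=0,\ \langle y,\mathbf{e}_i\rangle=1\}$ with $\mathbb{H}^n$, and the completion of the argument via Lemma \ref{lem:complete} (with $\lambda=w^{-(n-2)/2}$) and Lemma \ref{lem:coveringmap} plus simple connectedness of $\mathbb{H}^n$ is identical. The only difference is presentational: you derive the conformal identity $\Phi^*\langle\,,\rangle_{\mathbb{H}^n}=w^{-2}\langle\,,\rangle_\Sigma$ intrinsically from $\langle\psi,\psi\rangle=0$ and $\psi\perp T\Sigma$ (i.e.\ $\xi=\psi$ from Corollary \ref{coro:prop:null_frame}), whereas the paper verifies the same identity by a coordinate computation.
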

\begin{proof} \textcolor{black}{Without loss of generality, we can assume that $\psi_{n+2}\neq 0$, and --by further applying an isometry if necessary-- that $\psi_{n+2}=\langle \psi ,{\bf e}_{n+2}\rangle >0$.}  

For every $p\in \Sigma$, $\psi(p)=(\psi_1(p), \psi_{2}(p), \ldots, \psi_{n+1}(p), w(p)) \in \Lambda^+$. Note that
\[
\frac{1}{w^2(p)} \left(-\psi_1^2(p) + \psi_2^2(p)+\cdots+\psi_{n+1}^2(p)\right)=-1.
\] 
Define the function $\Psi: \Sigma^n \to 
\mathbb{H}^{n}$ by
\[
\Psi(p)= \left( \frac{\psi_1(p)}{w(p)}, \ldots,   \frac{\psi_{n+1}(p)}{w(p)} \right).
\]
For every $p\in \Sigma$ and ${\bf v} \in T_p \Sigma$ we have
\[
d\Psi_p({\bf v})
=
\frac{1}{w(p)} \left({\bf v}(\psi_1), \ldots, {\bf v}(\psi_{n+1})\right) 
-
\frac{{\bf v}(w)}{w^2(p)} \left(\psi_1(p), \ldots,\psi_{n+1}(p)\right).
\]

Denote by $\langle\,,\rangle_{\mathbb{H}^n}$ the standard metric of the hyperbolic space $\mathbb{H}^{n}$. Hence, for every ${\bf v, w} \in T_p\Sigma$ we have  
\begin{eqnarray*}
\langle d\Psi_p({\bf v}), d\Psi_p({\bf w})\rangle_{\mathbb{H}^n} &=&
 -\frac{{\bf v}(\psi_1) {\bf w}(\psi_1)}{w^2(p)}
+\frac{1}{w^2(p)}\sum_{i=2}^{n+1}{\bf v}(\psi_i) {\bf w}(\psi_i)  
 +\frac{{\bf v}(w) {\bf w}(w) }{ w^2(p)}\\
&=& 
\frac{1}{w^2(p)} \langle d\psi_p({\bf v}),  d\psi_p({\bf w}) \rangle_{\mathbb{L}^{n+2}}\\
&  =&  \frac{1}{w^2(p)} \langle {\bf v},  {\bf w} \rangle_{\Sigma},
\end{eqnarray*}
where $\langle \,,\rangle_{\Sigma}$ denotes the Riemannian metric on $\Sigma$ induced by the immersion $\psi$.  Thus, 
\begin{equation} \label{iso1bis}
\Psi^*(\langle\,,\rangle_{\mathbb{H}^n}) = \frac{1}{w^2} \langle\,,\rangle_\Sigma ,
\end{equation}
thus establishing that $\Psi$ is a local diffeomorphism.

Assume now that $\Sigma$ is complete and $w$ satisfies \eqref{hyp:1bis}. Hence, by applying Lemma \ref{lem:complete} to the function $\lambda=w^{-(n-2)/2}$, we conclude that the conformal metric 
\[
\widetilde{\langle\,,\rangle} = \frac{1}{w^2}\langle\,,\rangle_\Sigma
\]
is also complete on $\Sigma$. Then, equation \eqref{iso1bis} implies that the map
\[
\Psi:(\Sigma^n, \widetilde{\langle\,,\rangle}) \to (\mathbb{H}^n, \langle\,,\rangle_{\mathbb{H}^n})
\]
is a local isometry from a (connected)  complete  Riemannian manifold $\Sigma$ to a Riemannian manifold $\Psi(\Sigma)$.  By Lemma \ref{lem:coveringmap}, it follows $\Psi$ is a covering map and $\Psi(\Sigma)$ is complete. In particular, this implies 
\[
\Psi(\Sigma)= \mathbb{H}^{n}.
\]
Finally, since $\mathbb{H}^{n}$ is simply connected, we conclude that $\Psi$ is, in fact, a global diffeomorphism between $\Sigma$ and the hyperbolic space  $\mathbb{H}^{n}$. 
\end{proof}

The following is an immediate consequence of Theorem  \ref{thm:02}.
\begin{corollary}
Let $\psi:  \Sigma^n \to \Lambda^+ \subset \mathbb{L}^{n+2}$ be a codimension two complete spacelike submanifold that factors through the light cone $\Lambda^+$.  If \textcolor{black}{there exists a non-vanishing spacelike coordinate} function $w=\psi_i=\langle \psi ,{\bf e}_{i}\rangle$, with $2\leq i\leq n+2$, satisfying \eqref{hyp:1bis}, then 
 $\Sigma^n$ cannot be compact.
\end{corollary}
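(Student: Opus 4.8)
The plan is to derive the corollary directly from Theorem~\ref{thm:02} by a simple contradiction argument, exploiting the topological incompatibility between compactness and the hyperbolic space. The hypotheses of the corollary are precisely those of Theorem~\ref{thm:02}: we have a codimension two complete spacelike submanifold $\psi:\Sigma^n\to\Lambda^+\subset\mathbb{L}^{n+2}$ factoring through the light cone, together with a non-vanishing spacelike coordinate function $w=\psi_i$ satisfying the growth bound \eqref{hyp:1bis}. Therefore the conclusion of the theorem applies verbatim, and $\Sigma^n$ is conformally diffeomorphic---in particular, diffeomorphic---to the hyperbolic space $\mathbb{H}^n$.

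First I would invoke Theorem~\ref{thm:02} to obtain a diffeomorphism $\Psi:\Sigma^n\to\mathbb{H}^n$. Next I would argue by contradiction: suppose $\Sigma^n$ were compact. Since $\Psi$ is a diffeomorphism, its image $\mathbb{H}^n=\Psi(\Sigma^n)$ would then be compact as the continuous image of a compact space. But $\mathbb{H}^n$ is the complete simply connected Riemannian manifold of constant sectional curvature $-1$, and as realized in the model $\mathbb{H}^n=\{x\in\mathbb{L}^{n+1}\,:\,\langle x,x\rangle=-1,\ x_1>0\}$ it is an unbounded, non-compact hypersurface of $\mathbb{L}^{n+1}$. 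This contradiction shows that $\Sigma^n$ cannot be compact.

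There is essentially no obstacle here, as the corollary is a formal consequence of the theorem together with the elementary fact that $\mathbb{H}^n$ is non-compact. The only point requiring a word of care is ensuring that a \emph{conformal} diffeomorphism is in particular a diffeomorphism, so that the topological notion of compactness is transported faithfully; this is immediate since a conformal diffeomorphism is by definition a smooth bijection with smooth inverse. One could alternatively phrase the argument purely topologically---$\Sigma^n$ compact would force $\mathbb{H}^n$ to be a closed manifold, contradicting for instance the contractibility and noncompactness of $\mathbb{H}^n$---but the homeomorphism-with-noncompact-target formulation is the cleanest.
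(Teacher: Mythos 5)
Your proposal is correct and follows exactly the route the paper intends: the paper states this corollary as an ``immediate consequence'' of Theorem~\ref{thm:02}, namely that $\Sigma^n$ is conformally diffeomorphic to $\mathbb{H}^n$, which is non-compact, so $\Sigma^n$ cannot be compact. Your contradiction argument merely spells out this one-line deduction carefully, so there is nothing to add.
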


\begin{example} \label{ex:hyperbolic}
Given a positive smooth function $f: \mathbb{H}^n \to (0,+\infty)$ we can construct an embedding $\psi_f : \mathbb{H}^n \to \mathbb{L}^{n+2}$ by setting
\[
\psi_f(p)= (f(p) p , f(p)).
\]
We note that
\[
\langle \psi_f(p),\psi_f(p)\rangle = f^2(p)  \langle  p ,  p \rangle_{\mathbb{H}^n} + f^2(p) =0
\] and $u(p) = -\langle \psi_f, {\bf e}_1\rangle = f(p) p_1 >0$.  Hence $\psi_f( \mathbb{H}^n)$ is contained in  $\Lambda^+$.  

For every $p\in \mathbb{H}^n$, ${\bf v}\in T_p \mathbb{H}^n$, we obtain
\[
d(\psi_f)_{p}({\bf v})= ({\bf v}(f) p + f(p) {\bf v}, {\bf v}(f)).
\]
Finally notice
\[
\langle d(\psi_f)_{p}({\bf v}),d(\psi_f)_{p}({\bf w})\rangle_{\mathbb{L}^{n+2}}= f^2(p) \langle {\bf v},{\bf w}\rangle_{\mathbb{H}^n}.
\]
This implies that $\psi_f$ determines a spacelike immersion of $\mathbb{H}^n$ into $\Lambda^+$ whose induced metric is conformal to the standard metric of the hyperbolic space. 
\end{example}

The previous example is generic in virtue of our next result, which establishes that every codimension two \textcolor{black}{complete} spacelike submanifold contained in $\Lambda^+$ \textcolor{black}{and satisfying \eqref{hyp:1bis} for a non-vanishing spacelike coordinate function} is --up to a conformal diffeomorphism-- as described above.

\begin{theorem}  
Let $\psi:  \Sigma^n \to \Lambda^+ \subset \mathbb{L}^{n+2}$ be a codimension two complete spacelike submanifold that factors through the light cone $\Lambda^+$. Assume that \textcolor{black}{there exists a non-vanishing spacelike coordinate function $w=\psi_i=\langle \psi ,{\bf e}_{i}\rangle$, with $2\leq i\leq n+2$, satisfying \eqref{hyp:1bis}}. Then there exists a conformal diffeomorphism $\Psi: (\Sigma^n, \langle \,,\rangle_\Sigma) \to  (\mathbb{H}^n, \langle \,,\rangle_{\mathbb{H}^n }) $ and  a function $f : \mathbb{H}^{n} \to (0,+\infty)$ such that 
\[
\psi = \psi_f\circ \Psi 
 \]
 where   $\psi_f: \mathbb{H}^n \to \Lambda^+ \subset \mathbb{L}^{n+2}$ is the embedding 
 \[
\psi_f(p)= (f(p) p , f(p)).
\]
In particular, the immersion $\psi$ is an embedding.
%\[
%\xymatrix{
%\Sigma^n  \ar[r]^-{w}  & (0,+\infty)\\
% \mathbb{H}^n \ar[ru]_-{f}   \ar[u]^-{\Psi^{-1}} 
%} 
%\qquad  
%\qquad 
%\xymatrix{
%\Sigma^n  \ar[r]^-{\psi} \ar[d]_-{\Psi}  & \Lambda^+ \\
% \mathbb{H}^n \ar[r]_-{\psi_f}   &  \Lambda^+  \ar[u]_-{\iota}
%} 
%\]
\[
\xymatrix{
\Sigma^n  \ar[r]^-{w}  & (0,+\infty)\\
 \mathbb{H}^n \ar[ru]_-{f}   \ar[u]^-{\Psi^{-1}} 
} 
\qquad  
\qquad 
\textcolor{black}{\xymatrix{
\Sigma^n  \ar[r]^-{\psi} \ar[d]_-{\Psi} & \Lambda^{+}\\
 \mathbb{H}^n \ar[ru]_-{\psi_f}   %\ar[u]^-{\Psi}
}}
%\xymatrix{
%\Sigma^n  \ar[r]^-{\psi} \ar[d]_-{\Psi}  & \Lambda^+ \\
% \mathbb{H}^n \ar[r]_-{\psi_f}   &  \Lambda^+  \ar[u]_-{\iota}
%} 
\]
\end{theorem}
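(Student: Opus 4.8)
The plan is to build directly on Theorem \ref{thm:02}, whose proof already produces the conformal diffeomorphism we need. After normalizing so that $w=\psi_{n+2}=\langle\psi,\mathbf{e}_{n+2}\rangle>0$ (applying an isometry of $\mathbb{L}^{n+2}$ fixing the cone if necessary, exactly as in Theorem \ref{thm:02}), recall that the map
\[
\Psi(q)=\left(\frac{\psi_1(q)}{w(q)},\ldots,\frac{\psi_{n+1}(q)}{w(q)}\right)
\]
takes values in $\mathbb{H}^n$ --- because the cone relation $\langle\psi,\psi\rangle=0$ divided by $w^2$ gives $-(\psi_1/w)^2+\sum_{i=2}^{n+1}(\psi_i/w)^2=-1$, while $\psi_1>0$ and $w>0$ force the first coordinate to be positive --- and that Theorem \ref{thm:02} guarantees $\Psi$ is a global diffeomorphism with $\Psi^*\langle\,,\rangle_{\mathbb{H}^n}=w^{-2}\langle\,,\rangle_\Sigma$. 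Since $\Psi$ is bijective and $w>0$, I would then define
\[
f:=w\circ\Psi^{-1}:\mathbb{H}^n\to(0,+\infty),
\]
which is smooth and strictly positive.

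The second step is to verify the factorization $\psi=\psi_f\circ\Psi$ by a direct substitution, with no analysis involved. For $q\in\Sigma$ write $p=\Psi(q)\in\mathbb{H}^n$, so that $f(p)=w(q)$ by construction while $p$ has components $\psi_i(q)/w(q)$. Then
\[
\psi_f(\Psi(q))=\bigl(f(p)\,p,\,f(p)\bigr)=\bigl(w(q)\,\Psi(q),\,w(q)\bigr)=\bigl(\psi_1(q),\ldots,\psi_{n+1}(q),w(q)\bigr)=\psi(q),
\]
which is precisely the claimed identity and simultaneously confirms that $\psi_f$ maps into $\Lambda^+$, as already observed in Example \ref{ex:hyperbolic}.

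Finally, to obtain that $\psi$ is an embedding I would argue that $\psi_f$ is an embedding and invoke that $\Psi$ is a diffeomorphism. The map $\psi_f$ is an injective immersion --- injectivity follows from reading off the last coordinate $f(p)$ and then recovering $p$ --- and it is a homeomorphism onto its image, since the assignment $(y,s)\mapsto y/s$ furnishes a continuous inverse on $\psi_f(\mathbb{H}^n)\subset\Lambda^+$. Hence $\psi=\psi_f\circ\Psi$ is a composition of embeddings and is itself an embedding. I expect no genuine obstacle here: all the hard analytic content (completeness of the conformal metric via Lemma \ref{lem:complete}, the covering-map argument of Lemma \ref{lem:coveringmap}, and the use of simple connectivity of $\mathbb{H}^n$) is already absorbed into Theorem \ref{thm:02}; the only points requiring care are the well-definedness of $f$, which rests on $\Psi$ being a genuine diffeomorphism, and keeping the coordinate normalization $w=\psi_{n+2}>0$ consistent throughout.
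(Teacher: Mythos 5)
Your proposal is correct and follows essentially the same route as the paper's own proof: normalize so that $w=\psi_{n+2}>0$, reuse the conformal diffeomorphism $\Psi$ constructed in Theorem \ref{thm:02}, set $f=w\circ\Psi^{-1}$, and verify $\psi=\psi_f\circ\Psi$ by direct substitution. Your additional remarks (that $\psi_1/w>0$ so $\Psi$ lands in $\mathbb{H}^n$, and that $\psi_f$ is itself an embedding, hence so is $\psi$) merely make explicit details the paper leaves implicit, and they are accurate.
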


\begin{proof} Assume, without loss of generality, that $w=\psi_{n+2}>0$. The first part of the proof follows the same reasoning as in Theorem \ref{thm:02}, using the \textcolor{black}{conformal diffeomorphism} $\Psi: \Sigma^n \to  \mathbb{H}^{n}$ given by
\[
\Psi= \left( \frac{\psi_1}{w},  \frac{\psi_{2}}{w} ,  \ldots, \frac{\psi_{n+1}}{w} \right) .
\] 
Hence, by setting 
\[
\textcolor{black}{f=w\circ \Psi^{-1}} 
\]
we have
\[
(\psi_f\circ \Psi )(p)=(f(\Psi(p) ) \Psi(p) , f(\Psi(p) ))=(\psi_1(p), \ldots,\psi_{n+1}(p), w(p))=\psi(p),
\]
thus completing the proof.
\end{proof}

In the remainder of this section we study the problem of existence of (weakly) trapped submanifolds that factors through the light cone. Let us recall   the globally defined future-pointing normal null frame $\{\xi,\eta\}$ as  described in Corollary \ref{coro:prop:null_frame}.

The original formulation of trapped surfaces was given by Penrose \cite{penrose} in terms of the signs or the vanishing of the null expansions. Following this approach, for a codimension two spacelike submanifold in \textcolor{black}{$\Lambda^+\subset\mathbb{L}^{n+2}$} we have the following:
\begin{enumerate}
\item[(a)] $\Sigma$ is a trapped submanifold if and only if either both $\theta_\xi < 0$ and $\theta_\eta < 0$ (future trapped), or both $\theta_\xi > 0$ and $\theta_\eta > 0$ (past trapped).
\item[(b)] $\Sigma$ is a marginally trapped submanifold if and only if either $\theta_\xi = 0$ and $\theta_\eta\neq 0$ (future marginally trapped if $\theta_\eta < 0$ and past marginally trapped if $\theta_\eta > 0$), or $\theta_\xi \neq 0$ and $\theta_\eta = 0$ (future marginally trapped if $\theta_\xi < 0$ and past marginally trapped if $\theta_\xi > 0$).
\item[(c)] $\Sigma$ is a weakly trapped submanifold if and only if either both $\theta_\xi \leq  0$ and $\theta_\eta \leq 0$ with $\theta_\xi^2 + \theta_\eta^2 >0$ (future weakly trapped), or both $\theta_\xi \geq 0$ and $\theta_\eta \geq 0$ with $\theta_\xi^2 + \theta_\eta^2 > 0$ (past weakly trapped).
\end{enumerate}

Recall that by Corollary \ref{coro:prop:shapeoperators} the null expansions are given by
\begin{equation*}
\theta_\xi = \frac{1}{n} \mathop\mathrm{tr} A_\xi =1 \qquad \text{and}\qquad \theta_\eta = \frac{1}{n} \mathop\mathrm{tr} A_\eta = \frac{2 u \Delta u- n\textcolor{black}{(1+\|\nabla u\|^2)}}{2n u^2},
\end{equation*}
where $u= -\langle \psi, {\bf e_{1}}\rangle$. Then the mean curvature vector of $\Sigma$ satisfies 
\begin{equation}
\label{eq:mcv}
{\bf H} = -\theta_\xi \eta  -\theta_\eta \xi = -\frac{1}{2nu^2} (2u\Delta u-n\textcolor{black}{(1+\|\nabla u\|^2)} \xi - \eta.    
\end{equation}
Therefore,
\begin{equation}
\label{eq:mcv^2}
\langle {\bf H}, {\bf H} \rangle  =- 2 \theta_\xi \theta_\eta = -\frac{1}{nu^2} (2 u \Delta u-n( 1+ \|\nabla u\|^2)). 
\end{equation}

Hence, from \eqref{eq:mcv} and \eqref{eq:mcv^2}, we assert that
\begin{enumerate}
\item[(a)] $\Sigma$ is trapped if and only if either ${\bf H}$ is timelike and future-pointing (future trapped) or ${\bf H}$ is timelike and past-pointing (past trapped).
\item[(b)] $\Sigma$ is marginally trapped if and only if either ${\bf H}$ is null and future-pointing (future marginally trapped) or ${\bf H}$ is null and past-pointing (past marginally trapped).
\item[(c)] $\Sigma$  is weakly trapped if and only if either ${\bf H}$ is causal and future-pointing (future weakly trapped) or ${\bf H}$ is causal and past-pointing (past weakly trapped).
\end{enumerate}

As a consequence of \ref{eq:mcv^2}, we obtain the following result (see  \cite[Corollary 6.2]{ACR}) 
\begin{corollary}
\label{corollary:trapped}
Let $\psi: \Sigma \to \Lambda^+ \subset \mathbb{L}^{n+2}$ be a codimension two spacelike submanifold that factors through the light cone $\Lambda^+$. Let $u$ be the positive function 
$u =-\langle \psi, {\bf e_1}  \rangle$. Then
\begin{enumerate}
\item[(a)] $\Sigma$ is (necessarily past) trapped if and only if $u$ satisfies the differential inequality
$$2 u \Delta u-n( 1+ \|\nabla u\|^2) >0 \qquad \text{on $\Sigma$}.$$
\item[(b)] $\Sigma$ is (necessarily past) marginally trapped if and only $u$ satisfies the differential equation
$$2 u \Delta u-n( 1+ \|\nabla u\|^2) =0 \qquad \text{on $\Sigma$}.$$
\item[(c)] $\Sigma$ is (necessarily past) weakly trapped if and only if $u$ satisfies the differential inequality
$$2 u \Delta u-n( 1+ \|\nabla u\|^2) \geq 0\qquad \text{on $\Sigma$}.$$
\end{enumerate}
\end{corollary}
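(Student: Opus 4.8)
The plan is to read off the two null expansions from Corollary \ref{coro:prop:shapeoperators} and translate the sign conditions (a)--(c) on the pair $(\theta_\xi,\theta_\eta)$ directly into a single sign condition on the quantity $2u\Delta u - n(1+\|\nabla u\|^2)$. The decisive structural observation is that $\theta_\xi = 1$ is a positive constant; this collapses each future/past dichotomy into a single admissible branch and reduces every trapping criterion to a sign condition on $\theta_\eta$ alone.

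First I would recall that, by Corollary \ref{coro:prop:shapeoperators},
\[
\theta_\xi = 1, \qquad \theta_\eta = \frac{2u\Delta u - n(1+\|\nabla u\|^2)}{2nu^2}.
\]
Since $u$ is a positive function by hypothesis, the denominator $2nu^2$ is strictly positive, so at every point the sign of $\theta_\eta$ coincides with the sign of the numerator $2u\Delta u - n(1+\|\nabla u\|^2)$. In particular $\theta_\eta$ is well defined (as $u\neq 0$) and its vanishing, positivity, or negativity is governed entirely by that numerator.

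Then I would run through the three cases using the null-expansion criteria (a)--(c) recorded just before the corollary. For (a), because $\theta_\xi = 1 > 0$ the future-trapped alternative $\theta_\xi<0,\ \theta_\eta<0$ is vacuous; hence $\Sigma$ is trapped precisely when it is \emph{past} trapped, i.e. $\theta_\eta > 0$, which holds iff $2u\Delta u - n(1+\|\nabla u\|^2) > 0$. For (b), since $\theta_\xi = 1 \neq 0$, marginal trapping forces $\theta_\eta = 0$, equivalently the numerator vanishes, and the past character is dictated by $\theta_\xi > 0$. For (c), once more $\theta_\xi = 1 > 0$ rules out the future alternative, and in the past alternative the requirement $\theta_\xi^2 + \theta_\eta^2 > 0$ is automatic because $\theta_\xi = 1$; therefore weak trapping is equivalent to $\theta_\eta \geq 0$, i.e. $2u\Delta u - n(1+\|\nabla u\|^2) \geq 0$. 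In all three cases the fixed positive sign of $\theta_\xi$ is exactly what forces the ``past'' label asserted in the statement.

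There is no genuine analytic obstacle here: the content is entirely sign bookkeeping, and the only points demanding care are the positivity of $u$ (which guarantees $\theta_\eta$ is defined and fixes the sign of the denominator) together with the constancy and positivity of $\theta_\xi$. The proof is thus immediate once the expressions for $\theta_\xi$ and $\theta_\eta$ from Corollary \ref{coro:prop:shapeoperators} are substituted.
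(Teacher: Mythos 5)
Your proof is correct and is essentially the paper's own argument: the paper obtains the corollary directly from the identity $\langle {\bf H},{\bf H}\rangle=-2\,\theta_\xi\theta_\eta=-\frac{1}{nu^2}\left(2u\Delta u-n(1+\|\nabla u\|^2)\right)$, which is precisely the sign bookkeeping you carry out on the pair $(\theta_\xi,\theta_\eta)$ using $\theta_\xi=1$ and $2nu^2>0$. The only cosmetic difference is that you argue from Penrose's expansion-sign definition while the paper nominally routes through the causal character of the mean curvature vector; both formulations are stated side by side in the paper and coincide here, with $\theta_\xi=1>0$ forcing the past-pointing alternative in either language.
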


%\textcolor{red}{A PARTIR DE AQU\'I, LA REDACCI\'ON DEL RESTO DE LA SECCI\'ON 4 HECHA POR LUIS ES TOTALMENTE NUEVA, INTENTANDO PONER LOS RESULTADOS M\'AS LIMPIOS Y M\'AS CLAROS. POR FAVOR, REVISAD Y CONFIRMAD SI OS GUSTA MEJOR ESTA NUEVA REDACCI\'ON}

\begin{remark}
\label{remark_scal}
Using the Gauss equation of the immersion $\psi: \Sigma \to \Lambda^+ \subset \mathbb{L}^{n+2}$, it easily follows from \eqref{eq:mcv^2} that the scalar curvature of $\Sigma$ is given by
\begin{equation}
 \label{eq:scal-u}
\mathop\mathrm{Scal}=n(n-1)\langle {\bf H},{\bf H} \rangle=-\frac{ 1}{u^2} (n-1)(2 u \Delta u-n( 1+ \|\nabla u\|^2)).
\end{equation}
As a consequence, the immersion $\psi: \Sigma \to \Lambda^+ \subset \mathbb{L}^{n+2}$ is 
\begin{enumerate}
\item[(a)] (necessarily past) trapped if and only if $\mathrm{Scal}<0$
on $\Sigma$.
\item[(b)] (necessarily past) marginally trapped if and only $\mathrm{Scal}\equiv 0$ on $\Sigma$.
\item[(c)] (necessarily past) weakly trapped if and only if $\mathrm{Scal}\leq 0$ on $\Sigma$.
\end{enumerate}
\end{remark}

As a direct consequence of Remark \ref{remark_scal}, we observe that there exists no weakly trapped immersion through the light cone of $\mathbb{L}^{n+2}$ of a Riemannian manifold $\Sigma^n$ with non-negative scalar curvature which is not scalar-flat. More generally, when $n=2$ we can show the following non-existence result about weakly trapped conformal immersions. Before stating our result, recall that a (non necessarily complete) Riemannian manifold $\Sigma^n$ is said to be parabolic if the only subharmonic functions on $\Sigma$ which are bounded from above are the constant functions.
\begin{theorem}
Let $(\Sigma^2,  \langle\,,\rangle)$ be a non-flat parabolic Riemannian surface with non-negative Gaussian curvature. 
Then there exists no codimension two weakly trapped immersion 
 \[
 \psi: \Sigma^2 \to \Lambda^+ \subset \mathbb{L}^{4}
 \]
that factors through the light cone $\Lambda^+$ for which  $\Sigma^2$ is a conformal surface, that is, $\psi^*(\langle\,,\rangle_{\mathbb{L}^{4}})=\lambda^2 \langle\,,\rangle$, where the conformal factor $\lambda$ is bounded from above. In particular, this holds if $(\Sigma^2,  \langle\,,\rangle)$ is a non-flat complete Riemannian surface with non-negative Gaussian curvature.
\end{theorem}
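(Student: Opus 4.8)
The plan is to translate the weakly trapped hypothesis into a one-sided bound on the Gaussian curvature of the induced metric, and then play that bound against the prescribed conformal structure by invoking parabolicity.

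First I would pin down what ``weakly trapped'' means in this setting. Since $\theta_\xi = 1$ by Corollary \ref{coro:prop:shapeoperators}, the nondegeneracy condition $\theta_\xi^2+\theta_\eta^2>0$ is automatic and the immersion can only be past weakly trapped, which amounts to $\theta_\eta\ge 0$. By Remark \ref{remark_scal} this is precisely the statement $\mathrm{Scal}\le 0$ for the scalar curvature of $\Sigma$ in its induced metric $g_\Sigma=\psi^*\langle\,,\rangle_{\mathbb{L}^4}$. In dimension two one has $\mathrm{Scal}=2K_{g_\Sigma}$, so the working hypothesis reduces to
\[
K_{g_\Sigma}\le 0 \qquad\text{on }\Sigma,
\]
where $K_{g_\Sigma}$ is the Gaussian curvature of the induced metric.

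Next I would bring in the conformal relation $g_\Sigma=\lambda^2 g$, writing $\varphi=\log\lambda$. The two-dimensional conformal change of Gaussian curvature reads $K_{g_\Sigma}=\lambda^{-2}\bigl(K_g-\Delta_g\varphi\bigr)$, where $\Delta_g=\mathrm{div}_g\,\mathrm{grad}_g$ is the Laplacian of the background metric $g=\langle\,,\rangle$. As $\lambda^{-2}>0$, the inequality $K_{g_\Sigma}\le 0$ is equivalent to
\[
\Delta_g\varphi\ge K_g\ge 0,
\]
so $\varphi=\log\lambda$ is a $g$-subharmonic function on $\Sigma$; and since $\lambda$ is bounded from above, $\varphi$ is bounded from above as well.

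The decisive step is parabolicity. By the definition recalled before the statement, every subharmonic function on $(\Sigma,g)$ that is bounded from above is constant; hence $\varphi$ is constant and $\Delta_g\varphi\equiv 0$. Substituting this into $\Delta_g\varphi\ge K_g\ge 0$ forces $K_g\equiv 0$, that is, $(\Sigma,g)$ is flat, contradicting the non-flatness assumption. This yields the non-existence claim. For the final ``in particular'', I would invoke the classical fact that a complete surface with $K_g\ge 0$ has at most quadratic area growth and is therefore parabolic, so the hypotheses are satisfied. The main obstacle is essentially bookkeeping of sign conventions: one must match the paper's Laplacian $\Delta=\mathrm{tr}\,\mathrm{Hess}$ with the correct sign in the conformal curvature formula and with the analytic meaning of \emph{subharmonic}, and verify that the weakly trapped condition cannot degenerate (it does not, since $\theta_\xi=1\neq 0$) so that it translates \emph{exactly} into $K_{g_\Sigma}\le 0$. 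Once these are fixed, the argument is short.
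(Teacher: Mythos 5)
Your proposal is correct and follows essentially the same route as the paper's proof: translating the weakly trapped condition into $\widetilde{K}\le 0$ via Remark \ref{remark_scal}, applying the conformal curvature formula \eqref{eq:gauss_conformal} to show $\log\lambda$ is subharmonic and bounded above, and invoking parabolicity to force $\log\lambda$ constant and $K\equiv 0$, a contradiction. The only cosmetic difference is in the final step, where you justify parabolicity of complete surfaces with $K\ge 0$ via quadratic area growth, while the paper cites the classical Ahlfors and Blanc--Fiala--Huber theorem directly.
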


\begin{proof} 
Let $\widetilde{\langle\,,\rangle}=\lambda^2 \langle\,,\rangle$ and $\psi: (\Sigma^2,\widetilde{\langle\,,\rangle}) \to \Lambda^+ \subset \mathbb{L}^{4}$ be a weakly trapped surface such that $\psi(\Sigma^2) \subset \Lambda^+$.  From Remark \ref{remark_scal}, we see that the Gaussian curvature  $\widetilde{K}$ of $(\Sigma^2,\widetilde{\langle\,,\rangle})$ is non-positive.

On the other hand, from \eqref{eq:gauss_conformal} 
we know that the Gaussian curvatures $K$ and $\widetilde{K}$ of the conformal metrics $\langle\,,\rangle$ and $\widetilde{\langle\,,\rangle}=\lambda^2 \langle \,,\rangle$, respectively, satisfy 
\[
\lambda^2 \widetilde{K} =  K - \Delta \log(\lambda) \leq 0,
\]
where $\Delta$ denotes the Laplacian with respect to the metric $\langle\,,\rangle$.  From our hypothesis on $K$ we have
\begin{equation} \label{ineq:K-lambda}
 0\leq K \leq   \Delta \log(\lambda) \qquad \text{with} \qquad \sup_\Sigma \log(\lambda) < + \infty.
 \end{equation}
In other words, the function $\log(\lambda)$ is a subharmonic function on $(\Sigma^2,\langle\,,\rangle)$, which is bounded from above. Since $(\Sigma^2,\langle\,,\rangle)$ is parabolic, it then follows from \eqref{ineq:K-lambda} that $\log(\lambda)$ must be a constant function and $K=0$ on $\Sigma$, which is a contradiction.

In particular, by a classical result of  Ahlfors \cite{A} and Blanc-Fiala-Huber \cite{Huber}, every complete Riemannian surface with non-negative Gaussian curvature is parabolic. Therefore, the result holds true if $(\Sigma^2,  \langle\,,\rangle)$ is a non-flat complete Riemannian surface with non-negative Gaussian curvature.
\end{proof}

The proof of our next result is based also on the parabolicity of the manifold.
\begin{theorem}
 \label{thm:parab}
 Let $(\Sigma^n,  \langle\,,\rangle )$ be a parabolic Riemannian manifold with non-negative scalar curvature and dimension $2\leq n\leq 4$. There exists no codimension two trapped immersion 
 $\psi: \Sigma^n \to \Lambda^+ \subset \mathbb{L}^{n+2}$
for which $\psi^*(\langle\,,\rangle_{\mathbb{L}^{n+2}})=\lambda^2 \langle\,,\rangle$, 
where the conformal factor $\lambda$ is bounded from above.
\end{theorem}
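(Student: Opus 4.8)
The plan is to argue by contradiction and to convert the trapped condition into the strict subharmonicity of the single function $\lambda$, after which parabolicity closes the argument at once. Suppose a trapped immersion $\psi$ as in the statement exists, and write $g=\langle\,,\rangle$ for the given parabolic background metric (so $\mathrm{Scal}_g\geq 0$) and $\widetilde{g}=\psi^*(\langle\,,\rangle_{\mathbb{L}^{n+2}})=\lambda^2 g$ for the induced metric. The first step is to apply Remark \ref{remark_scal}: since $\psi$ is trapped, the induced metric has strictly negative scalar curvature, $\mathrm{Scal}_{\widetilde{g}}<0$ on $\Sigma$. I am thus reduced to a purely conformal statement: the background $g$ has $\mathrm{Scal}_g\geq 0$, the conformal metric $\widetilde{g}=\lambda^2 g$ has $\mathrm{Scal}_{\widetilde{g}}<0$, and $\lambda>0$ is bounded above.

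Next I would feed this into the standard conformal transformation law for the scalar curvature. Writing $\Delta$ and $\nabla$ for the Laplacian and gradient of $g$, a direct rewriting of the identity
\[
\lambda^2\,\mathrm{Scal}_{\widetilde{g}}=\mathrm{Scal}_g-2(n-1)\,\Delta(\log\lambda)-(n-1)(n-2)\,\|\nabla\log\lambda\|^2
\]
in terms of $\lambda$ itself (using $\Delta\log\lambda=\lambda^{-1}\Delta\lambda-\lambda^{-2}\|\nabla\lambda\|^2$ and $\|\nabla\log\lambda\|^2=\lambda^{-2}\|\nabla\lambda\|^2$) yields
\[
\lambda^2\,\mathrm{Scal}_{\widetilde{g}}=\mathrm{Scal}_g-\frac{2(n-1)}{\lambda}\,\Delta\lambda+\frac{(n-1)(4-n)}{\lambda^2}\,\|\nabla\lambda\|^2.
\]
Here is where the dimensional hypothesis enters decisively: for $2\leq n\leq 4$ the coefficient $(n-1)(4-n)$ is nonnegative, so the gradient term contributes with a favorable sign. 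Combining this with $\mathrm{Scal}_{\widetilde{g}}<0$ and $\mathrm{Scal}_g\geq 0$ gives
\[
\frac{2(n-1)}{\lambda}\,\Delta\lambda>\mathrm{Scal}_g+\frac{(n-1)(4-n)}{\lambda^2}\,\|\nabla\lambda\|^2\geq 0,
\]
and since $\lambda>0$ this forces $\Delta\lambda>0$ everywhere; that is, $\lambda$ is a strictly subharmonic function on $(\Sigma,g)$.

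To finish, I observe that $\lambda$ is bounded from above by hypothesis. Thus $\lambda$ is a bounded-above subharmonic function on the parabolic manifold $(\Sigma,g)$, so by the very definition of parabolicity it must be constant; but then $\Delta\lambda\equiv 0$, contradicting $\Delta\lambda>0$. This contradiction shows that no such trapped immersion can exist. The step I expect to require the most care is the conformal bookkeeping in the second paragraph: one must track the sign convention for $\Delta$ (here the trace of the Hessian, consistent with Remark \ref{remark_scal} and Lemma \ref{lem:shape}) and verify that rewriting the law in the variable $\lambda$ rather than $\log\lambda$ is exactly what exposes the favorable coefficient $(4-n)$, thereby pinpointing why the range $2\leq n\leq 4$ is the natural one for this argument. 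I note in passing that an alternative based on the Yamabe variable $\lambda^{(n-2)/2}$ (with $\log\lambda$ in the borderline case $n=2$) would remove the gradient term altogether and work in every dimension, but the present choice is the one that both keeps a single uniform test function and makes transparent the role of $n\leq 4$.
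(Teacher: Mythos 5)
Your proof is correct and follows essentially the same route as the paper: Remark~\ref{remark_scal} gives $\widetilde{\mathop\mathrm{Scal}}<0$ for the induced metric, the conformal transformation law of Lemma~\ref{lem:conformal} (which you re-derive correctly from the $\log\lambda$ form) combined with $\mathop\mathrm{Scal}\geq 0$ and the sign of $(n-1)(4-n)$ for $2\leq n\leq 4$ forces $\Delta\lambda>0$, and parabolicity together with $\sup_\Sigma\lambda<+\infty$ yields the contradiction. The only differences are cosmetic --- you extract strict subharmonicity $\Delta\lambda>0$ at once, whereas the paper first gets $\Delta\lambda\geq 0$, concludes $\lambda$ is constant, and then re-inserts constancy into the inequality --- and your parenthetical remark that the Yamabe substitution $u=\lambda^{(n-2)/2}$ (or $\log\lambda$ when $n=2$) would eliminate the gradient term and hence the restriction $n\leq 4$ is a sound observation that the paper does not make.
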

\begin{proof}
Let $\psi:(\Sigma^n,\widetilde{\langle\,,\rangle}) \to \Lambda^+ \subset \mathbb{L}^{n+2}$ be a codimension two trapped immersion for which $\psi^*(\langle\,,\rangle_{\mathbb{L}^{n+2}})=\widetilde{\langle\,,\rangle}=\lambda^2 \langle\,,\rangle$. In particular, from Remark \ref{remark_scal} we know that the scalar curvature $\widetilde{\mathrm{Scal}}$ of the conformal metric $\widetilde{\langle\,,\rangle}$ satisfies 
\begin{equation}
\label{eq:scaltilde}
\widetilde{\mathrm{Scal}}< 0 \qquad \text{on } \Sigma.
\end{equation}

From Lemma \ref{lem:conformal}, the scalar curvature $\mathrm{Scal}$ of $\langle\,,\rangle$ and $\widetilde{\mathrm{Scal}}$ are related by
\begin{equation}
\label{eq:scal2}
\lambda^2 \, \widetilde{\mathop\mathrm{Scal}} = \mathop\mathrm{Scal} -2(n-1) \frac{\Delta\lambda}{\lambda}- (n-1)(n-4) \frac{\|\nabla\lambda\|^2}{ \lambda^2},
\end{equation}
where $\Delta, \nabla$ and $\|\cdot\|$ denotes the Laplacian, the gradient and the norm with respect to the metric $\langle\,,\rangle$. Recall also that 
\begin{equation}
\label{eq:scal3}
\mathop\mathrm{Scal}\geq 0 \qquad \text{on } \Sigma.
\end{equation}
It follows from \eqref{eq:scaltilde}, \eqref{eq:scal2} and \eqref{eq:scal3} that
\begin{equation}
\label{ineq:scal}
0\leq \mathop\mathrm{Scal}< 2(n-1) \frac{\Delta\lambda}{\lambda} 
+
(n-1)(n-4) \frac{\|\nabla\lambda\|^2}{ \lambda^2}.
\end{equation}
As $2\leq n\leq 4$, we obtain that 
\[
2 \frac{\Delta\lambda}{\lambda} \geq (4-n) \frac{\|\nabla\lambda\|^2}{ \lambda^2}\geq 0.
\]
Hence, $\Delta\lambda \geq 0$ with $\sup_{\Sigma } \lambda<+\infty$, and, by the parabolicity of $(\Sigma,\langle\,,\rangle)$, $\lambda$ is a constant. Then, from \eqref{ineq:scal} and \eqref{eq:scal2} we get $\mathop\mathrm{Scal}=\widetilde{\mathop\mathrm{Scal}}=0$ on $\Sigma$,
which is not possible because of \eqref{eq:scaltilde}. This finishes the proof.
\end{proof}

Our next result makes use of the stochastic completeness of the manifold. Recall that a (non necessarily complete) Riemannian manifold $\Sigma$ is said to be stochastically complete if for some (and therefore, for any) $(x,t)\in\Sigma\times(0,+\infty)$ it holds that $\int_\Sigma p(x,y,t)dy=1$, where $p(x,y,t)$ is the heat kernel of the Laplacian operator. As proved by Pigola, Rigoli and Setti \cite{PRS1} (see also \cite[Theorem 3.1]{PRS}), a Riemannian manifold $\Sigma$ is stochastically complete if and only if the weak Omori-Yau maximum principle holds on $\Sigma$, in the following sense: for every $u\in\mathcal{C}^2(\Sigma)$ with $u^*=\sup_\Sigma<+\infty$, there 
exists a sequence $\{p_k\}\subset\Sigma$ such that 
\[
u(p_k)>u^*-\frac{1}{k} \quad \text{and} \quad \Delta u(p_k)<\frac{1}{k} 
\]
for every $k\geq 1$.

 \begin{theorem}
Let $(\Sigma^n, \langle\,,\rangle )$ be a stochastically complete Riemannian manifold with $S_{*}:=\inf_\Sigma \mathrm{Scal}>0$ and dimension $2\leq n\leq 4$.
There exists no codimension two weakly trapped immersion 
 $\psi: \Sigma^n \to \Lambda^+ \subset \mathbb{L}^{n+2}$ for which
$\psi^*(\langle\,,\rangle_{\mathbb{L}^{n+2}})=\lambda^2 \langle\,,\rangle$,
where the conformal factor $\lambda$ is bounded from above.
\end{theorem}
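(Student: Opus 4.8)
The plan is to mirror the argument of Theorem~\ref{thm:parab}, replacing parabolicity with the weak Omori--Yau maximum principle afforded by stochastic completeness, and exploiting the \emph{strict} positivity $S_*>0$ to compensate for the fact that weak trapping only yields a non-strict sign on the conformal scalar curvature. I would begin by arguing by contradiction: suppose a weakly trapped immersion $\psi:(\Sigma^n,\widetilde{\langle\,,\rangle})\to\Lambda^+$ with $\widetilde{\langle\,,\rangle}=\lambda^2\langle\,,\rangle$ exists.

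By Remark~\ref{remark_scal}(c), weak trapping is equivalent to $\widetilde{\mathop\mathrm{Scal}}\leq 0$ on $\Sigma$. Feeding this into the conformal change of scalar curvature from Lemma~\ref{lem:conformal} (equation \eqref{eq:scal2}) gives
\[
0\;\geq\;\lambda^2\,\widetilde{\mathop\mathrm{Scal}}\;=\;\mathop\mathrm{Scal}-2(n-1)\frac{\Delta\lambda}{\lambda}-(n-1)(n-4)\frac{\|\nabla\lambda\|^2}{\lambda^2}.
\]
Because $2\leq n\leq 4$ the coefficient $(n-1)(n-4)$ is non-positive, so the gradient term is $\leq 0$ and may be discarded to yield $\mathop\mathrm{Scal}\leq 2(n-1)\,\Delta\lambda/\lambda$. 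Combining this with $\mathop\mathrm{Scal}\geq S_*>0$ and using $\lambda>0$, I would obtain the \emph{strict} differential inequality
\[
\Delta\lambda\;\geq\;\frac{S_*}{2(n-1)}\,\lambda\;>\;0\qquad\text{on }\Sigma.
\]

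To close the argument, note that $\lambda$ is positive and bounded above, so $\lambda^*:=\sup_\Sigma\lambda$ satisfies $0<\lambda^*<+\infty$. Since $\Sigma$ is stochastically complete, the weak Omori--Yau maximum principle furnishes a sequence $\{p_k\}\subset\Sigma$ with $\lambda(p_k)>\lambda^*-\frac{1}{k}$ and $\Delta\lambda(p_k)<\frac{1}{k}$. Evaluating the inequality above at $p_k$ then forces
\[
\frac{1}{k}\;>\;\Delta\lambda(p_k)\;\geq\;\frac{S_*}{2(n-1)}\lambda(p_k)\;>\;\frac{S_*}{2(n-1)}\Bigl(\lambda^*-\frac{1}{k}\Bigr),
\]
and letting $k\to\infty$ gives $0\geq \frac{S_*}{2(n-1)}\lambda^*$, contradicting $S_*>0$ and $\lambda^*>0$. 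The main (and essentially only) delicate point is the sign bookkeeping: the non-strict conclusion $\widetilde{\mathop\mathrm{Scal}}\leq 0$ coming from weak (rather than strict) trapping is too weak to contradict a maximum principle on its own, but the uniform lower bound $S_*>0$ upgrades the subharmonicity of $\lambda$ to the estimate $\Delta\lambda\geq c\,\lambda$ with a fixed positive constant $c=S_*/(2(n-1))$, and it is precisely this uniform positive constant that the Omori--Yau sequence at the supremum contradicts.
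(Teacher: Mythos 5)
Your proposal is correct and takes essentially the same route as the paper's proof: both convert weak trapping into $\widetilde{\mathop\mathrm{Scal}}\leq 0$ via Remark \ref{remark_scal}, use $2\leq n\leq 4$ to discard the non-positive gradient term in the conformal scalar curvature formula, and then contradict $S_*>0$ by evaluating along a weak Omori--Yau sequence for the bounded function $\lambda$. The only cosmetic difference is that you first isolate the pointwise inequality $\Delta\lambda\geq\frac{S_*}{2(n-1)}\lambda$ before invoking the sequence, whereas the paper plugs the sequence points directly into the chain $0<S_*\leq\mathop\mathrm{Scal}(p_k)\leq 2(n-1)\Delta\lambda(p_k)/\lambda(p_k)$ and lets $\mathop\mathrm{Scal}(p_k)\to 0$; the two bookkeepings are equivalent.
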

\begin{proof}
Let $\psi: (\Sigma^n, \widetilde{\langle\,,\rangle}) \to \Lambda^+ \subset \mathbb{L}^{n+2}$ be a codimension two weakly trapped immersion such that 
$\psi^*(\langle\,,\rangle_{\mathbb{L}^{n+2}})=\widetilde{\langle\,,\rangle}=\lambda^2 \langle\,,\rangle$. In particular, $\widetilde{\mathop\mathrm{Scal}}\leq 0$. Reasoning as at the beginning of the
proof of Theorem \ref{thm:parab}, we can obtain
\[
0<S_*\leq\mathop\mathrm{Scal} \leq 2(n-1) \frac{\Delta \lambda}{\lambda} 
+
(n-1)(n-4) \frac{\|\nabla \lambda\|^2}{ \lambda^2}\leq 2(n-1) \frac{\Delta \lambda}{\lambda},
\]
since $2\leq n \leq 4$.

Suppose that $\lambda$ is bounded from above, that is, $\lambda^* = \sup_{\Sigma^n} \lambda < +\infty$. In addition, we note that $\lambda^*>0$. Since $(\Sigma, \langle\,,\rangle)$ is
stochastically complete, by the weak maximum principle there exists a sequence
$\{p_k\}_{k\geq 1} \subset \Sigma^n$ such that
\[
\lambda(p_k) > \lambda^*- \frac{1}{k}
\qquad
\text{and}
\qquad
\Delta\lambda(p_k) <\frac{1}{k}
\] 
for every $k\geq 1$. Then 
\[
0<S_* \leq \mathop\mathrm{Scal} (p_k) \leq 2(n-1) \frac{\Delta\lambda (p_k)}{\lambda(p_k)}  <\frac{2(n-1) }{ \lambda(p_k)} \cdot \frac{1}{k}.
\]
Since $\lambda (p_k) \to \lambda^*>0$,  from here it follows  $\mathop\mathrm{Scal} (p_k)  \to 0$ as $k\to +\infty$,  which is a contradiction.
\end{proof}

\section{Submanifolds through the lightlike cylinder \textcolor{black}{$\Lambda^+\times\mathbb{R}$}.}\label{sec:cylinder}

Among the distinguished lightlike hypersurfaces of Lorentz-Minkowski space we encounter those which posses the highest degrees of symmetry. In \cite{NPS1} all lightlike revolution hypersurfaces in Lorentz-Minkowski spacetime are characterized as either lightlike planes, light cones or lightlike cylinders. In this section we analyze this latter class of hypersurfaces. Thus, we consider spacelike submanifolds $\Sigma$ that factors through a cylinder of the form
\[
\Lambda^+ \times \mathbb{R} = \{ (x,y)\in \mathbb{L}^{n+1}\times \mathbb{R}= \mathbb{L}^{n+2} \,:\, \textcolor{black}{\langle x,x \rangle_{\mathbb{L}^{n+1}}=0}, \, x_1 >0,  \, y \in \mathbb{R}\} ,
\]
where $\Lambda^+$ is the future component of the light cone of the Lorentz-Minkowski spacetime $\mathbb{L}^{n+1}$. For another approach to this study, we refer the reader to the recent paper \cite{GYT} where the authors develop a completely different and independent investigation of codimension-two spacelike submanifolds through the lightlike cylinder $\Lambda^+\times\mathbb{R}$. In particular, in \cite{GYT} the authors analyze the geometric interpretations of the associated extrinsic invariants by providing characterizations of (pseudo-)isotropic and pseudo-umbilical submanifolds, as well as local classification of these classes of submanifolds.

Our first result in this context is the following.
\begin{theorem}\label{thm:01}
Let $\psi:  \Sigma^n \to\textcolor{black}{\Lambda^+\times\mathbb{R}\subset} \mathbb{L}^{n+2}$ be a codimension two complete spacelike submanifold that factors through $\Lambda^+ \times \mathbb{R}$  with $n\geq 3$. Assume that the positive function $u= -\langle \psi, {\bf e}_1 \rangle $ satisfies 
\begin{equation}
\label{hyp:1}
u(p) \leq C r(p) \log(r(p)), \qquad r(p)\gg1,
\end{equation}
where $C$ is a positive constant and $r$ denotes the Riemannian distance function from a fixed origin $o \in \Sigma$. Let $w = \langle \psi, {\bf e}_{n+2}\rangle.$
If $\displaystyle{ \frac{dw}{u}}$ is an exact form  on $\Sigma$, then 
$\Sigma$ is conformally diffeomorphic to the cylinder $\mathbb{S}^{n-1} \times \mathbb{R} \subset \mathbb{R}^{n+1}$.
\end{theorem}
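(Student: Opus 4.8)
The plan is to adapt the strategy of Theorem \ref{thm:02} to the cylindrical setting, replacing the target $\mathbb{H}^n$ by the round cylinder $\mathbb{S}^{n-1}\times\mathbb{R}$. Writing $\psi=(\psi_1,\dots,\psi_{n+1},\psi_{n+2})$, the factorization through $\Lambda^+\times\mathbb{R}$ means that the first $n+1$ components lie on the light cone of $\mathbb{L}^{n+1}$, that is, $-\psi_1^2+\psi_2^2+\cdots+\psi_{n+1}^2=0$ with $u=\psi_1>0$, while $w=\psi_{n+2}$ is free. Dividing the spacelike block by $u$ produces a point of the unit sphere, so the natural candidate map is
\[
\Phi=\Big(\tfrac{\psi_2}{u},\dots,\tfrac{\psi_{n+1}}{u},\,\zeta\Big):\Sigma^n\longrightarrow \mathbb{S}^{n-1}\times\mathbb{R},
\]
where $\zeta$ is a primitive of the one-form $dw/u$. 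This is exactly the point at which the hypothesis that $dw/u$ be exact enters: it guarantees that $\zeta$ is a globally well-defined smooth function on $\Sigma$, and hence that $\Phi$ is globally defined.

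The key computation is to show that $\Phi$ is conformal with the same factor $u^{-2}$ that governs the hyperbolic case. First, the light cone relation $\sum_{i=2}^{n+1}\psi_i^2=u^2$ confirms that the first $n$ components of $\Phi$ land in $\mathbb{S}^{n-1}$. Differentiating this relation along $\mathbf{v}\in T_p\Sigma$ gives $\sum_{i=2}^{n+1}\psi_i\,\mathbf{v}(\psi_i)=u\,\mathbf{v}(u)$, which is precisely what makes the cross terms in the pullback collapse. Expanding $\langle d\Phi(\mathbf{v}),d\Phi(\mathbf{w})\rangle$ for the product metric of $\mathbb{S}^{n-1}\times\mathbb{R}\subset\mathbb{R}^{n+1}$, the spherical block reduces to $\frac{1}{u^2}\big(-\mathbf{v}(\psi_1)\mathbf{w}(\psi_1)+\sum_{i=2}^{n+1}\mathbf{v}(\psi_i)\mathbf{w}(\psi_i)\big)$, while the extra coordinate contributes $d\zeta(\mathbf{v})\,d\zeta(\mathbf{w})=\frac{1}{u^2}\,\mathbf{v}(\psi_{n+2})\mathbf{w}(\psi_{n+2})$. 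Adding the two, the bracket reassembles into the full Lorentzian product of $d\psi(\mathbf{v})$ and $d\psi(\mathbf{w})$, yielding
\[
\Phi^*\big(\langle\,,\rangle_{\mathbb{S}^{n-1}\times\mathbb{R}}\big)=\frac{1}{u^2}\langle\,,\rangle_\Sigma,
\]
so $\Phi$ is a conformal local diffeomorphism.

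From here the argument runs parallel to Theorem \ref{thm:02}. I would apply Lemma \ref{lem:complete} with $\lambda=u^{-(n-2)/2}$, for which $\lambda^{2/(n-2)}=u^{-1}$, so that hypothesis \eqref{hyp:1} translates verbatim into \eqref{hypLem}; this requires $n\geq 3$ and yields that the conformal metric $u^{-2}\langle\,,\rangle_\Sigma$ is complete. Then $\Phi$ is a local isometry from the complete manifold $(\Sigma,u^{-2}\langle\,,\rangle_\Sigma)$ onto its image, and Lemma \ref{lem:coveringmap} shows it is a covering map with complete image. Since a complete image is closed while $\Phi(\Sigma)$ is also open (as $\Phi$ is a local diffeomorphism) and $\mathbb{S}^{n-1}\times\mathbb{R}$ is connected, the image is all of $\mathbb{S}^{n-1}\times\mathbb{R}$. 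Finally, because $n\geq 3$ makes $\mathbb{S}^{n-1}$, and hence the cylinder, simply connected, the covering $\Phi$ of a connected manifold must be one-sheeted, i.e. a diffeomorphism, establishing that $\Sigma$ is conformally diffeomorphic to $\mathbb{S}^{n-1}\times\mathbb{R}$.

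I expect the main obstacle to be bookkeeping rather than conceptual: one must verify the precise cancellation of the cross terms in the metric computation (which hinges on the differentiated light cone identity) and check that the contribution of the $\zeta$-coordinate combines correctly with the spherical block to rebuild the ambient Lorentzian metric. The only genuinely new ingredient compared to Theorem \ref{thm:02} is the use of exactness of $dw/u$ to globalize $\zeta$; everything downstream --- completeness via Lemma \ref{lem:complete}, the covering property via Lemma \ref{lem:coveringmap}, and the simple-connectivity endgame --- mirrors the hyperbolic case.
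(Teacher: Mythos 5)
Your proposal is correct and follows essentially the same route as the paper's own proof: the same map $\bigl(\psi_2/u,\dots,\psi_{n+1}/u,\zeta\bigr)$ with $\zeta$ a primitive of $dw/u$ supplied by the exactness hypothesis, the same conformal identity $\Phi^*\langle\,,\rangle_0=u^{-2}\langle\,,\rangle_\Sigma$ obtained from the differentiated cone relation, and the same endgame via Lemma \ref{lem:complete} with $\lambda=u^{-(n-2)/2}$, Lemma \ref{lem:coveringmap}, and the simple connectivity of $\mathbb{S}^{n-1}\times\mathbb{R}$ for $n\geq 3$. There are no gaps; your explicit remarks on why $n\geq 3$ is needed (both for the exponent in Lemma \ref{lem:complete} and for simple connectivity) only make explicit what the paper leaves implicit.
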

\begin{proof}For every $p\in \Sigma$, let $\psi(p)=(u(p), \psi_{2}(p), \ldots, \psi_{n+1}(p), w(p))$, where 
\[
\sum_{i=2}^{n+1} \psi_i^2(p)=u^2(p)>0.
\] 
Further, let $g: \Sigma \to \mathbb{R}$ be  a smooth function such that $dg= \displaystyle{ \frac{dw}{u}}$. Then
define the function $\Psi: \Sigma^n \to \Psi(\Sigma) \subset \mathbb{S}^{n-1} \times \mathbb{R}$ by
\[
\Psi(p)= \left( \frac{\psi_2(p)}{u(p)}, \ldots,   \frac{\psi_{n+1}(p)}{u(p)}, g(p) \right).
\]
For every $p\in \Sigma$ and ${\bf v} \in T_p \Sigma$ we have
\[
d\Psi_p({\bf v})
=
\frac{1}{u(p)} \left({\bf v}(\psi_2), \ldots, {\bf v}(\psi_{n+1}),0\right) 
-
\frac{{\bf v}(u)}{u^2(p)} \left(\psi_2(p), \ldots,\psi_{n+1}(p),0\right) 
+
(0,\ldots,0, {\bf v}(g) ).
\]
Denote by $\langle\,,\rangle_0$ the standard metric of the Euclidean space $\mathbb{R}^{n+1}$. Hence, for every ${\bf v, w} \in T_p\Sigma$ we have ${\bf v}(g)=\displaystyle{ \frac{{\bf v}(w)}{u(p)}}$ and  
\begin{eqnarray*}
\langle d\Psi_p({\bf v}), d\Psi_p({\bf w})\rangle_0 &=& \frac{1}{u^2(p)} \sum_{i=2}^{n+1} {\bf v}(\psi_i) {\bf w}(\psi_i) + \frac{{\bf v}(u) {\bf w}(u)}{u^4(p)} \sum_{i=2}^{n+1} \psi^2_i(p) \\
& &- \frac{{\bf v}(u) }{u^3(p)} \sum_{i=2}^{n+1} {\bf w}(\psi_i) \psi_i - \frac{{\bf w}(u) }{u^3(p)} \sum_{i=2}^{n+1} {\bf v}(\psi_i) \psi_i + {\bf v} (g){\bf w} (g)\\
&=& \frac{1}{u^2(p)} \left( -{\bf v}(u){\bf w}(u) + \sum_{i=2}^{n+1} {\bf v}(\psi_i) {\bf w}(\psi_i)   +   { \bf v}(w) { \bf w}(w) \right) \\
&=&  \frac{1}{u^2(p)} \langle d\psi_p({\bf v}),  d\psi_p({\bf w}) \rangle_{\mathbb{L}^{n+2}}\\  &=&  \frac{1}{u^2(p)} \langle {\bf v},  {\bf w} \rangle_{\Sigma},
\end{eqnarray*}
where $\langle \,,\rangle_{\Sigma}$ denotes the Riemannian metric on $\Sigma$ induced by the immersion $\psi$.  In other words, 
\begin{equation}
\label{iso1}
\Psi^*(\langle\,,\rangle_0) = \frac{1}{u^2} \langle\,,\rangle_\Sigma .
\end{equation} 
It follows that $\Psi$ is a local diffeomorphism.
Assume now that $\Sigma$ is complete and $u$ satisfies \eqref{hyp:1}. By Lemma \ref{lem:complete} applied to the function $\lambda=u^{-(n-2)/2}$, we obtain that the conformal metric 
\[
\widetilde{\langle\,,\rangle} = \frac{1}{u^2}\langle\,,\rangle_\Sigma
\]
is also complete on $\Sigma$. Then, equation \eqref{iso1} means that the map
\[
\Psi:(\Sigma^n, \widetilde{\langle\,,\rangle}) \to (\Psi(\Sigma), \langle\,,\rangle_0)
\]
is a local isometry from a (connected)  complete  Riemannian manifold $\Sigma$ onto another connected Riemannian manifold $\Psi(\Sigma)$ of the same dimension.  By Lemma \ref{lem:coveringmap}, we obtain that $\Psi$ is a covering map and $\Psi(\Sigma)$ is complete. In particular, it implies that  $$\Psi(\Sigma)= \mathbb{S}^{n-1}\times \mathbb{R}.$$ 

Finally, since $\mathbb{S}^{n-1}\times \mathbb{R}$ is simply connected, we conclude that $\Psi$ is, in fact, a global diffeomorphism between $\Sigma$ and the cylinder  $\mathbb{S}^{n-1}\times \mathbb{R}$. 
\end{proof}

In the following example, we show that  the upper bound given by  \eqref{hyp:1} on the growth of $u$ is sharp.

\begin{example} Let  $f: \mathbb{R} \to (0,+\infty)$ be a smooth function.  We consider the warped product $\Sigma^n =\mathbb{R} \times_f \mathbb{S}^{n-1}$, where recall that  the warped product metric is given by 
\[ 
\langle  \, ,  \rangle_\Sigma=dt^2+f^2(t) \langle  \, , \rangle_{\mathbb{S}^{n-1}}.
\]
It is well known that a warped product of complete manifolds is complete for every warping function $f$ (see for instance  \cite[Lemma 7.40]{ONeill}).  Taken this into account, we have that $\Sigma^n$ is a complete manifold. We construct an immersion $\psi : \Sigma^n \to \mathbb{L}^{n+2}$ by setting
\[
\psi(p) = \psi(t,q)= (f(t) , f(t) q, t)
\]
for every $p=(t, q)\in \Sigma^n$.
Observe that
\[
\langle \psi(p),\psi(p) \rangle = -f^2(t)+ f^2(t)  \langle  q,  q \rangle_{\mathbb{S}^{n-1}} + t^2 = t^2 
\] 
and $u(p) = -\langle \psi(p), {\bf e}_1\rangle = f(t)  >0$.  Hence, $\psi(\Sigma)$ is contained in $\textcolor{black}{\Lambda^+ \times \mathbb{R}} \subset \mathbb{L}^{n+2}$. 

On the other hand, given $p = (t,q)\in \Sigma^n$ and ${\bf v} = (s, {\bf w})\in T_p \Sigma$, we obtain
\[ 
d\psi_p({\bf v})= (f'(t), f'(t) 	q + f(t) {\bf w}, s).
\]
Moreover,
\[
\langle d\psi_{p}({\bf v}),d\psi_{p}({\bf v})\rangle_{\mathbb{L}^{n+2}}
= -f'(t)^2+ f'(t)^2    + f^2(t)  \langle   {\bf w},   {\bf w} \rangle_{\mathbb{S}^{n-1}} + s^2
= \langle {\bf v},{\bf v}\rangle_{\Sigma}.
\]
It then follows that $\psi$ determines a spacelike isometric immersion from $\Sigma^n$ into the cylinder over the light cone $\textcolor{black}{\Lambda^+ \times \mathbb{R}}$.
By Theorem \ref{thm:01}, we obtain that 
$\Sigma$ is conformally diffeomorphic to the cylinder $\mathbb{S}^{n-1} \times \mathbb{R}$ if
\[
f(t) \leq C r(t,q) \log r(t,q), \qquad  r(t,q)\gg1.
\]

Now we will show that the upper bound \eqref{hyp:1} in Theorem \ref{thm:01} is sharp.  Let $f(t)=t^2+1$ be the warping function. Then  the function  $g(t) =\arctan t$  satisfies $dg = \frac{dt}{f(t)}$. In this case,  it is direct to see that \[\Psi(p)=\Psi(t,q) = (q, g(t)),\qquad \text{with $p\in \Sigma$},\] is an isometry from $(\Sigma, \widetilde{\langle\,,\rangle}) $ to the open set   $ \mathbb{S}^{n-1}\times (\pi/2,-\pi/{2}) \subset  \mathbb{S}^{n-1}\times  \mathbb{R}$, where
$\widetilde{\langle\,,\rangle} = \frac{1}{f^2}\langle\,,\rangle_\Sigma
 $.   In other words the manifold $\Sigma^n$, with the warped product metric, is conformally diffeomorphic to  $ \mathbb{S}^{n-1}\times (\pi/2,-\pi/{2})$.
In particular,  the metric $\frac{1}{f^2}{\langle\,,\rangle}_{\Sigma}$  is not complete. 
\end{example}

\begin{remark}
Every warped product  $\Sigma^n =\mathbb{R} \times_f \mathbb{S}^{n-1}$ is  conformally diffeomorphic to the cylinder $\mathbb{S}^{n-1} \times \mathbb{R}$ if $f$ is bounded from above. In this case, the map $\Phi$ is given by
\[
(t,q) \in \Sigma^n \mapsto (q, g(t)) \in \mathbb{S}^{n-1} \times \mathbb{R}
\] 
where $g(t) = \int_{t_0}^t \frac{d\tau}{f(\tau)}$ and  $\frac{1}{f^2}$ is the conformal factor.
\end{remark}

\textcolor{black}{Finally, under the existence of a non-vanishing spacelike coordinate function in the factor $\Lambda^+\subset\mathbb{L}^{n+1}$, we can assume without loss of generality that $\psi_{n+1}=\langle \psi, {\bf e}_{n+1} \rangle >0$, by further applying an isometry if necessary. Thus, following the same lines as in the proof of Theorem \ref{thm:02} and Theorem \ref{thm:01} we have the next result.}
\begin{theorem}
Let $\psi:  \Sigma^n \to\textcolor{black}{\Lambda^+ \times \mathbb{R}\subset} \mathbb{L}^{n+2}$ be a codimension two complete spacelike submanifold that factors through $ \Lambda^+ \times \mathbb{R}$, \textcolor{black}{with $n\geq 2$}. Assume that the function $v=\psi_{n+1}=\langle \psi, {\bf e}_{n+1} \rangle $ satisfies 
\begin{equation}
\textcolor{black}{0<v(p) \leq C r(p) \log(r(p))}, \qquad r(p)\gg1,
\end{equation}
where $C$ is a positive constant and $r$ denotes the Riemannian distance function from a fixed origin $o \in \Sigma$. Let $w = \langle \psi, {\bf e}_{n+2}\rangle$.
If $\displaystyle{ \frac{dw}{v}}$ is an exact form  on $\Sigma$, then 
$\Sigma$ is conformally diffeomorphic to the product $\mathbb{H}^{n-1} \times \mathbb{R}.$  
\end{theorem}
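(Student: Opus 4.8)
The plan is to follow the same strategy as in Theorems \ref{thm:02} and \ref{thm:01}, but now normalizing the first $n$ ambient coordinates by the spacelike function $v=\psi_{n+1}$ instead of by the timelike one. The decisive observation is that the lightlike condition on the factor $\Lambda^+\subset\mathbb{L}^{n+1}$, namely $-\psi_1^2+\psi_2^2+\cdots+\psi_{n+1}^2=0$, becomes after dividing by $v^2$
\[
-\left(\frac{\psi_1}{v}\right)^2+\left(\frac{\psi_2}{v}\right)^2+\cdots+\left(\frac{\psi_n}{v}\right)^2=-1.
\]
Since $\psi_1=u>0$ and $v>0$, the point $(\psi_1/v,\ldots,\psi_n/v)$ lies in the hyperboloid model $\mathbb{H}^{n-1}\subset\mathbb{L}^n$ (on the sheet $x_1>0$). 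This motivates defining $\Psi:\Sigma^n\to\mathbb{H}^{n-1}\times\mathbb{R}$ by
\[
\Psi(p)=\left(\frac{\psi_1(p)}{v(p)},\ldots,\frac{\psi_n(p)}{v(p)},\,g(p)\right),
\]
where $g$ is a primitive of the exact $1$-form $dw/v$ with $w=\psi_{n+2}$.

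The core of the argument is to verify that $\Psi^*(\langle\,,\rangle_{\mathbb{H}^{n-1}\times\mathbb{R}})=v^{-2}\langle\,,\rangle_\Sigma$. Writing $P=(\psi_1,\ldots,\psi_n)$, for ${\bf v},{\bf w}\in T_p\Sigma$ the differential reads
\[
d\Psi_p({\bf v})=\frac{1}{v}({\bf v}(\psi_1),\ldots,{\bf v}(\psi_n),0)-\frac{{\bf v}(v)}{v^2}(P,0)+(0,\ldots,0,{\bf v}(g)),
\]
and likewise for ${\bf w}$. The two algebraic identities that drive the computation are $\langle P,P\rangle_{\mathbb{L}^n}=-v^2$ (the rewritten lightlike condition) and $\langle({\bf v}(\psi_1),\ldots,{\bf v}(\psi_n)),P\rangle_{\mathbb{L}^n}=-v\,{\bf v}(v)$, obtained by differentiating that condition along ${\bf v}$. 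Feeding these into the $\mathbb{L}^n$ inner product of the first $n$ slots makes the cross-terms cancel, and after adding the $\mathbb{R}$-contribution ${\bf v}(g){\bf w}(g)=v^{-2}{\bf v}(w){\bf w}(w)$ one recovers exactly $v^{-2}\langle d\psi_p({\bf v}),d\psi_p({\bf w})\rangle_{\mathbb{L}^{n+2}}=v^{-2}\langle{\bf v},{\bf w}\rangle_\Sigma$. In particular $\Psi$ is a local diffeomorphism realizing the asserted conformal relation.

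From here the completeness and covering arguments run exactly as in the earlier proofs. Applying Lemma \ref{lem:complete} to $\lambda=v^{-(n-2)/2}$ together with the growth hypothesis on $v$ shows that $\widetilde{\langle\,,\rangle}=v^{-2}\langle\,,\rangle_\Sigma$ is complete, so $\Psi:(\Sigma^n,\widetilde{\langle\,,\rangle})\to\mathbb{H}^{n-1}\times\mathbb{R}$ is a local isometry out of a complete manifold; Lemma \ref{lem:coveringmap} then makes it a covering map onto its image, which, being complete and open in a connected manifold, must be all of $\mathbb{H}^{n-1}\times\mathbb{R}$. The feature that makes this case valid for every $n\geq 2$ (rather than $n\geq 3$ as in Theorem \ref{thm:01}) is that $\mathbb{H}^{n-1}\times\mathbb{R}$ is simply connected for all such $n$, since $\mathbb{H}^{n-1}$ is contractible; hence the covering is trivial and $\Psi$ is a global conformal diffeomorphism. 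I expect the only delicate point to be the metric bookkeeping in the middle paragraph, specifically the correct use of the differentiated lightlike constraint to cancel the cross-terms; everything else is a direct transcription of the proofs of Theorems \ref{thm:02} and \ref{thm:01}.
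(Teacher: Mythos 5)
Your proposal is correct and takes essentially the same approach as the paper: the authors' (sketched) proof consists precisely of replacing the map of Theorem \ref{thm:01} by $\Psi(p)=\left(\frac{\psi_1(p)}{v},\ldots,\frac{\psi_n(p)}{v},g(p)\right)$ into $\mathbb{H}^{n-1}\times\mathbb{R}$, observing that $\Psi^*(\langle\,,\rangle_0)=\frac{1}{v^2}\langle\,,\rangle_\Sigma$, and then running the completeness and covering arguments of Theorems \ref{thm:02} and \ref{thm:01} verbatim. Your explicit verification of the conformal identity via the differentiated lightlike constraint, and your observation that simple connectedness of $\mathbb{H}^{n-1}\times\mathbb{R}$ (contractibility of $\mathbb{H}^{n-1}$) is what permits $n\geq 2$ rather than $n\geq 3$, are exactly the details the paper leaves implicit.
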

\textcolor{black}{For its proof, replace the map $\Psi:\Sigma^n\to\mathbb{S}^{n-1}\times\mathbb{R}$ in the proof of Theorem \ref{thm:01} by the new map $\Psi:\Sigma^n\to\mathbb{H}^{n-1}\times\mathbb{R}$ given now by
\[
\Psi(p)=\left(\frac{\psi_1(p)}{v},\ldots,\frac{\psi_n(p)}{v},g(p)\right),
\]
and observe that
\[
\Psi^*(\langle\,,\rangle_0) = \frac{1}{v^2} \langle\,,\rangle_\Sigma,
\] 
where $\langle\,,\rangle_0$ stands here for the metric of $\mathbb{H}^{n-1}\times\mathbb{R}$.
} %\textcolor{red}{COMPROBAR ESTO ULTIMO}

\section{Nullcones in the De Sitter spacetime}

\textcolor{black}{As is well-known}, the ($n+2$)-dimensional de Sitter spacetime $\mathbb{S}^{n+2}_1$ is the complete and simply connected manifold with constant sectional curvature $K=1$. It can be represented as the hyperquadric
\[
\mathbb{S}^{n+2}_1 = \{ x \in \mathbb{L}^{n+3} \,\, |\,\, \langle x,x\rangle =1\},
\]
\textcolor{black}{with $x=(x_1, x_2, \ldots, x_{n+3})\in\mathbb{L}^{n+3}$ and
$\langle ,\rangle =-dx_1^2+dx_2^2+\cdots+dx_{n+3}^2$.}

Let ${\bf a} \in \mathbb{S}_1^{n+2}$ be a fixed point of de Sitter spacetime. The {\em future light cone} in $\mathbb{S}_1^{n+2}$  with vertex at ${\bf a}$ is the subset
 \[
\Lambda^+_{\bf a}= \{x \in \mathbb{S}_1^{n+2} \,\,|\,\, \langle {\bf a}, x \rangle=1, \, \textcolor{black}{\langle x-{\bf a}, {\bf e}_1\rangle =-x_1+a_1 <0} \}.
\]
Equivalently, $x \in \mathbb{S}_1^{n+2}$ belongs to $\Lambda^+_{\bf a}$ if and only if $x-{\bf a}$ is a future null vector. By applying an isometry, we may assume that the vertex of the light cone is the point \textcolor{black}{${\bf a}=(0,\ldots, 0,1)\in \mathbb{S}_1^{n+2}$}. 

Let $\psi : \Sigma \to \Lambda^+\subset\mathbb{S}^{n+2}_1$ be a codimension two spacelike submanifold which is contained in the future component of the light cone 
\[
\Lambda^+=\Lambda^+_{\bf a} =\{x \in \mathbb{S}_1^{n+2}\, \,|\, \,x_{n+3}=1, \, x_1>0\}.
\] 
The tools developed in Section \ref{sec:LC} enable us to prove  the following results mutatis mutandis. Therefore we do not include the proofs. We remark that our results rely on the analysis of the radial function \textcolor{black}{$w=\psi_{n+2}=\langle \psi, {\bf e}_{n+2} \rangle$}. For related results pertaining a height function, see \cite{ACR2}.

\begin{theorem} 
Let $\psi: \Sigma^n \to \Lambda^+ \subset \mathbb{S}_1^{n+2}$ be a codimension two  complete spacelike submanifold that factors through $ \Lambda^+$. Assume that the  function \textcolor{black}{$w=\psi_{n+2}=\langle \psi, {\bf e}_{n+2} \rangle$} satisfies 
\begin{equation} \label{hype:deSitter}
0<w(p) \leq C r(p) \log(r(p)), \qquad r(p)\gg1,
\end{equation}
where $C$ is a positive constant and $r$ denotes the Riemannian distance function from a fixed origin $o \in \Sigma$. Then 
$\Sigma^n$ is conformally diffeomorphic to the hyperbolic space $\mathbb{H}^{n}$.
\end{theorem}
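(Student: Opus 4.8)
The plan is to follow the proof of Theorem \ref{thm:02} almost verbatim, the only genuinely new ingredient being the algebraic identity that makes the construction land in hyperbolic space. First I would record the two defining constraints of $\Lambda^+\subset\mathbb{S}^{n+2}_1$ along the immersion $\psi=(\psi_1,\ldots,\psi_{n+2},1)$: the de Sitter hyperquadric equation $\langle\psi,\psi\rangle=1$ combined with the light-cone slice $\psi_{n+3}\equiv 1$ gives
\[
-\psi_1^2+\psi_2^2+\cdots+\psi_{n+1}^2+\psi_{n+2}^2=0,
\]
that is, $-\psi_1^2+\sum_{i=2}^{n+1}\psi_i^2=-w^2$ with $w=\psi_{n+2}$. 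This is precisely the relation that governed the Minkowski light-cone case, so dividing by $w^2>0$ shows that $(\psi_1/w,\ldots,\psi_{n+1}/w)$ satisfies the hyperboloid equation in $\mathbb{L}^{n+1}$, with positive first coordinate since $\psi_1>0$ and $w>0$. I would therefore define
\[
\Psi=\left(\frac{\psi_1}{w},\ldots,\frac{\psi_{n+1}}{w}\right):\Sigma^n\to\mathbb{H}^n.
\]

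Next I would verify that $\Psi$ is conformal with factor $1/w^2$, i.e. $\Psi^*\langle\,,\rangle_{\mathbb{H}^n}=w^{-2}\langle\,,\rangle_\Sigma$. Since $\psi_{n+3}$ is constant, the induced metric on $\Sigma$ is computed only from the first $n+2$ coordinates. Expanding $d\Psi_p({\bf v})$ and contracting with the $\mathbb{L}^{n+1}$ metric (writing $\epsilon_1=-1$ and $\epsilon_i=1$ for $i\geq 2$), a short calculation using the identity above together with its directional derivative $\sum_{i=1}^{n+1}\epsilon_i\psi_i\,{\bf v}(\psi_i)=-w\,{\bf v}(w)$ collapses the cross and quadratic terms into exactly $w^{-2}\big(-{\bf v}(\psi_1){\bf w}(\psi_1)+\sum_{i=2}^{n+1}{\bf v}(\psi_i){\bf w}(\psi_i)+{\bf v}(w){\bf w}(w)\big)=w^{-2}\langle{\bf v},{\bf w}\rangle_\Sigma$. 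This is the same cancellation as in \eqref{iso1bis}, and in particular $\Psi$ is a local diffeomorphism.

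The global conclusion is then identical to that of Theorem \ref{thm:02}. Under hypothesis \eqref{hype:deSitter} I would apply Lemma \ref{lem:complete} with $\lambda=w^{-(n-2)/2}$ to conclude that the conformal metric $\widetilde{\langle\,,\rangle}=w^{-2}\langle\,,\rangle_\Sigma$ is complete. Consequently $\Psi:(\Sigma,\widetilde{\langle\,,\rangle})\to\mathbb{H}^n$ is a local isometry out of a connected complete manifold, so Lemma \ref{lem:coveringmap} makes it a covering map onto $\mathbb{H}^n$; since $\mathbb{H}^n$ is simply connected, this upgrades to a global conformal diffeomorphism.

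The only step that requires genuine care—and the reason the statement is not a literal transcription of Theorem \ref{thm:02}—is the first one: one must use \emph{both} the de Sitter equation $\langle\psi,\psi\rangle=1$ and the slice $x_{n+3}=1$ to recover the Minkowski relation $-\psi_1^2+\sum_{i=2}^{n+1}\psi_i^2=-w^2$. Once this identity is established, the conformality computation and the completeness/covering argument transfer without modification, which is exactly the sense in which the proof carries over \emph{mutatis mutandis}.
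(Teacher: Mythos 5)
Your proof is correct and is precisely what the paper intends: the paper omits the argument, stating that the tools of Section \ref{sec:LC} apply \emph{mutatis mutandis}, and your reconstruction — deriving $-\psi_1^2+\sum_{i=2}^{n+1}\psi_i^2=-w^2$ from the hyperquadric equation together with the slice $\psi_{n+3}\equiv 1$, then running the conformal map, Lemma \ref{lem:complete}, and the covering argument of Theorem \ref{thm:02} verbatim — is exactly that adaptation. No gaps.
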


\begin{example} Given a positive differentiable function $f:\mathbb{H}^n\to (0,\infty)$ we can construct an embedding 
$\psi_f :\mathbb{H}^n\to  \Lambda^+ \subset \mathbb{S}_1^{n+2}$ by setting
\[
\psi_f(p)=( f(p) p , f(p), 1).
\] 
\textcolor{black}{Note that $\langle\psi_f(p),\psi_f(p)\rangle=f^2(p)\langle p,p\rangle_{\mathbb{L}^{n+1}}+f^2(p)+1=1$, 
$(\psi_f)_{n+3}=1$ and $(\psi_f)_1=f(p)p_1>0$, so that $\psi_f(\mathbb{H}^n)\subset\Lambda^+$.}
\textcolor{black}{
For every $p\in\mathbb{H}^n$ and ${\bf v}\in T_p\mathbb{H}^n$, we have
\[
d(\psi_f)_p({\bf v})=({\bf v}(f)p+f(p){\bf v},{\bf v}(f), 0),
\]
from which one gets
\[
\langle 
d(\psi_f)_p({\bf v}),  d(\psi_f)_p({\bf w})
\rangle_{\mathbb{S}_1^{n+2}}
= 
f^2(p)
\langle 
{\bf v}, {\bf w}
\rangle_{\mathbb{H}^n}.
\]
Therefore, $\psi_f$ defines a spacelike immersion of $\mathbb{H}^n$ into $\Lambda^+\subset\mathbb{S}^{n+2}_1$ whose induced metric is conformal to the standard hyperbolic metric.}
\end{example}

\begin{corollary}
Let $\psi:  \Sigma^n \to \Lambda^+ \subset \mathbb{S}^{n+2}_1$ be a codimension two complete spacelike submanifold that factors through the nullcone $\Lambda^+$. Assume that $w= \langle \psi, {\bf e}_{n+2} \rangle$ satisfies the bounds \eqref{hype:deSitter}. Then \textcolor{black}{there exists a conformal} diffeomorphism $\Psi: (\Sigma^n, \langle \,,\rangle_\Sigma) \to  (\mathbb{H}^n, \langle \,,\rangle_{\mathbb{H}^n} ) $ such that 
\[
\psi = \psi_f\circ \Psi 
 \]
 where  $f=\psi_{n+2}\circ \Psi^{-1}$  and $\psi_f: \mathbb{H}^n \to \Lambda^+ \subset \mathbb{S}^{n+1}_1$ is the embedding 
 \[
\psi_f(p)= (f(p) p , f(p), 1).
\]
In particular, the immersion $\psi$ is an embedding.
\[
\xymatrix{
\Sigma^n  \ar[r]^-{w}  & (0,+\infty)\\
 \mathbb{H}^n \ar[ru]_-{f}   \ar[u]^-{\Psi^{-1}} 
} 
\qquad  
\qquad 
%\xymatrix{
%\Sigma^n  \ar[r]^-{\psi} \ar[d]_-{\Psi}  & \Lambda^+ \subset %\mathbb{S}^{n+2}_1\\
% \mathbb{H}^n \ar[r]_-{\psi_f}   &  \Lambda^+  \subset \mathbb{S}^{n+2}_1 %\ar[u]_-{\iota}
%} 
\textcolor{black}{\xymatrix{
\Sigma^n  \ar[r]^-{\psi} \ar[d]_-{\Psi} & \Lambda^{+}\subset\mathbb{S}^{n+2}_1\\
 \mathbb{H}^n \ar[ru]_-{\psi_f}   %\ar[u]^-{\Psi}
}}
\]
\end{corollary}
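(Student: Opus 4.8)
The plan is to transplant, essentially word for word, the structure theorem already established for the Minkowski light cone, exploiting the observation that fixing the last De Sitter coordinate $x_{n+3}=1$ identifies $\Lambda^+\subset\mathbb{S}^{n+2}_1$ with a copy of the Minkowski light cone sitting inside the affine hyperplane $\{x_{n+3}=1\}\cong\mathbb{L}^{n+2}$. Concretely, since $\psi(p)\in\Lambda^+$ we have $\psi_{n+3}\equiv 1$ together with $\langle\psi,\psi\rangle=1$, and these two relations force the null identity
\[
-\psi_1^2+\psi_2^2+\cdots+\psi_{n+1}^2+w^2=0,\qquad w=\psi_{n+2}.
\]
Dividing by $w^2>0$ shows that $\big(\tfrac{\psi_1}{w},\ldots,\tfrac{\psi_{n+1}}{w}\big)$ lies on the hyperboloid $\langle y,y\rangle_{\mathbb{L}^{n+1}}=-1$, and on its future sheet because $\psi_1>0$; hence it represents a point of $\mathbb{H}^n$. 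This is the exact analogue of the computation opening the proof of Theorem \ref{thm:02}.

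First I would invoke the De Sitter structure theorem proved just above: under the growth hypothesis \eqref{hype:deSitter}, the map
\[
\Psi=\Big(\frac{\psi_1}{w},\ldots,\frac{\psi_{n+1}}{w}\Big):\Sigma^n\to\mathbb{H}^n
\]
is a conformal diffeomorphism with $\Psi^*\langle\,,\rangle_{\mathbb{H}^n}=w^{-2}\langle\,,\rangle_\Sigma$. This is obtained by the same computation as in \eqref{iso1bis}: the constant coordinate $\psi_{n+3}=1$ has vanishing differential and so contributes nothing, after which Lemma \ref{lem:complete} (applied to $\lambda=w^{-(n-2)/2}$) and Lemma \ref{lem:coveringmap} yield that $\Psi$ is a covering of the simply connected space $\mathbb{H}^n$, hence a global diffeomorphism.

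Next I would set $f=w\circ\Psi^{-1}:\mathbb{H}^n\to(0,+\infty)$, which is precisely the function $f=\psi_{n+2}\circ\Psi^{-1}$ appearing in the statement, and verify $\psi=\psi_f\circ\Psi$ by direct substitution. Indeed, for $p\in\Sigma$ we have $f(\Psi(p))=w(p)$, so
\[
(\psi_f\circ\Psi)(p)=\big(w(p)\,\Psi(p),\,w(p),\,1\big)=(\psi_1(p),\ldots,\psi_{n+1}(p),w(p),1)=\psi(p),
\]
the last equality using $\psi_{n+3}\equiv 1$. Since $\psi_f$ is an embedding and $\Psi$ is a diffeomorphism, the composition $\psi=\psi_f\circ\Psi$ is an embedding, which gives the final assertion.

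There is no genuine obstacle here; the entire content is the reduction carried out in the first paragraph. The one point deserving a line of checking is that the conformal factor comes out to be exactly $1/w^2$ — with the constant coordinate $x_{n+3}=1$ playing no role in $d\Psi$ — so that the completeness argument of Lemma \ref{lem:complete} applies verbatim with the identical bound \eqref{hype:deSitter}. Everything else is a transcription of the Minkowski argument, which is why the proof may legitimately be carried out \emph{mutatis mutandis}.
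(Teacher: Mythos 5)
Your proposal is correct and is precisely the argument the paper intends: the paper omits the proof of this corollary, stating that the results of Section \ref{sec:LC} apply \emph{mutatis mutandis}, and your proof carries out exactly that transfer --- using $\psi_{n+3}\equiv 1$ to reduce to the null identity, the map $\Psi=(\psi_1/w,\ldots,\psi_{n+1}/w)$ with conformal factor $1/w^2$, Lemmas \ref{lem:complete} and \ref{lem:coveringmap}, and the factorization $\psi=\psi_f\circ\Psi$ with $f=w\circ\Psi^{-1}$, just as in Theorem \ref{thm:02} and the structure theorem following it.
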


In \cite[Section 3]{NPS2016},  the authors found a \textcolor{black}{general} method to construct null hypersurfaces in Lorentzian warped products. In particular they gave several examples of null hypersurfaces in both de Sitter and anti de Sitter spaces. One of them was \textcolor{black}{\cite[Example 4.2]{NPS2016}}; for the sake of completeness we give some details of it.

First we recall that the de Sitter space 
$\mathbb{S}_1^{n+2}\subset \mathbb{L}^{n+3}$ 
also may be described as the warped product
$ -\mathbb{R}\times_{\cosh}\mathbb{S}^{n+1}$ by using the isometry  $\mathfrak{F}:-\mathbb{R}\times_{\cosh} \mathbb{S}^{n+1}  \to \mathbb{S}_1^{n+2} $ given by 
\begin{equation}\label{eq:iso:deSitter}
\mathfrak{F}(t,q)  =(\sinh t, (\cosh t)  q) =x \in \mathbb{S}_1^{n+2}
\end{equation}
for every $(t,q) \in -\mathbb{R}\times_{\cosh} \mathbb{S}^{n+1}$.

We now consider the hypersurface $S\subset \mathbb{S}^{n+1}$ defined by the set of points making an angle $\theta_0$ with the fixed vector \textcolor{black}{${\bf e}_{n+3}=(0,0,\ldots,0,1) \in \mathbb{S}^{n+1}$}.  Hence
\[
S=\{ q \in \mathbb{S}^{n+1}\mid \langle {\bf e}_{n+3} , q\rangle = \cos \theta_0\}
\]
and $S$ can be considered as a parallel in $\mathbb{S}^{n+1}$, see Figure \ref{fig1}. 

\begin{figure}[ht]
\centering
\begin{tikzpicture}[scale=0.67]
\shade[ball color=blue!80,opacity=0.2] (0,0) circle (3.5cm);
\draw (0,0) node at (5.4,1.5){} circle (3.5cm);
\draw (-3.5,0) arc (180:360:3.5 and 0.6);
\draw[dashed] (3.5,0) arc (0:180:3.5 and 0.6);
\draw[-] (0,-4 ) --  (0,5) node[right]{$x_{n+3}$};  
\fill[fill=red] (3,1.86) circle (2pt);  
\draw[-,thick] (0,0) -- (3,1.86) node[right]{$\,q\in S\subset \mathbb{S}^{n+1}$};
\draw[->,thick] (0,0) -- (0,3.5) node[below]{$\,\,\qquad{\bf e}_{n+3}$};
\fill[fill=black] (0,0) circle (1.5pt);
\draw[red,thick] (-3,1.86) arc (180:360:3 and .3);
\draw[red,thick,dashed] (-3,1.86) arc (180:0:3 and .3);
 \draw[black,thick] (0.8,0.4) arc (46:95:1.1)
node[above]{$\,\,\,\qquad\theta_0$};
\end{tikzpicture}
\caption{}
\label{fig1}
\end{figure}

Given $q_0\in S$,   by \textcolor{black}{\cite[Proposition 3.3]{NPS2016}}, there is a neighborhood $U$ of $q_0$ in $\mathbb{S}^{n+1}$ and a function $f: U \to \mathbb{R}$ such that the graph of $f$ given by  
\[
M=\{(f(q),q) \,|\, q\in U\}
\]
 is   a null hypersurface in $-\mathbb{R}\times_{\cosh} \mathbb{S}^{n+1}$.  In this case we have  
\[
f(q) =( g^{-1} \circ r)(q) 
\]
where $r:\mathbb{S}^{n+1}\to \mathbb{R}$ is called the signed distance function  to the parallel $S$ given by 
\[
r(q) = \theta_0 - \cos^{-1}\langle {\bf e}_{n+3} , q\rangle 
\qquad \text{and} 
\qquad
g(s)= \int_{0}^{s} \frac{d\tau}{ \cosh (\tau)}, \qquad s\in \mathbb{R}.
\]
Then the map $f$ becomes 
\[
f(q) = \sinh^{-1} \circ \tan (\theta_0-\cos^{-1}\langle {\bf e}_{n+3} , q\rangle )
\]

Using the isometry \eqref{eq:iso:deSitter}, straightforward computations imply that $M$ can represented as the intersection of a hyperplane with $\mathbb{S}^{n+2}_1 \subset \mathbb{L}^{n+3}$. In fact, 
\[
\mathfrak{F}(M)= \{ (x_1, x_2 , \ldots,x_{n+3}) \in \mathbb{S}^{n+2}_1 \mid x_{n+3}= \alpha + \sqrt{ 1- \alpha^2} x_1 \},
\]
where $\cos \theta_0 = \alpha.$  Note that $\mathfrak{F}(M)$ can be parametrized by
\[
(s, u_1, \ldots , u_n) \mapsto ( s , R(s) \varphi(u_1, \ldots, u_n) , \alpha + \sqrt{1-\alpha^2 }s )
\]
where $R(s)= \alpha s - \sqrt{1-\alpha^2}$ and $\varphi$ is an orthogonal parametrization of $\mathbb{S}^{n}$. Here 
\[
\textcolor{black}{\frac{\partial}{\partial s}}= (1, \alpha \varphi(u_1, \ldots, u_n), \sqrt{1-\alpha^2} ) 
\]
is a null vector field tangent to $\mathfrak{F}(M)$.
\begin{definition} Let $0\leq \alpha \leq 1$. The {\em nullcone} $\Lambda_\alpha^+\subset \mathbb{S}_1^{n+2}$  is the set 
\[
\Lambda_\alpha^+ = \{ (x_1, x_2 , \ldots,x_{n+3}) \in \mathbb{S}^{n+2}_1 
\mid x_{n+3}= \alpha + \sqrt{ 1- \alpha^2} x_1, \  x_1>0 \}.
\] 
\end{definition}
 Observe that the light cone with the vertex in ${\bf e}_{n+3}=(0,\ldots, 0,1)\in \mathbb{S}_1^{n+2}$ corresponds to the case $\alpha=1$.  That is 
 \[
 \Lambda^+=\Lambda^+_1=\{ 
 x \in \mathbb{S}^{n+2}_1 
\mid x_{n+3}=1, \  x_1>0 \}.
\]
On the other hand, the case $\alpha=0$ corresponds to
\[
\Lambda^+_0=\{ 
 x \in \mathbb{S}^{n+2}_1 
\mid x_{n+3}=x_1, \  x_1>0 \}.
\]

%\textcolor{red}{NUEVA REDACCION DEL FINAL DE LA SECCION 6 A PARTIR DE AQUI Y HASTA EL FINAL. POR FAVOR, REVISAR Y CONFIRMAR SI TODO OK}
\begin{theorem}\label{thm:GCone}
Let $\psi:\Sigma^n \to \Lambda_\alpha^+ \subset \mathbb{S}_1^{n+2}$ be a codimension two spacelike submanifold that factors through $\Lambda^+_\alpha$. In the case where $0<\alpha<1$, assume further that $\psi(\Sigma)$ is contained either in $\displaystyle{\Lambda_{\alpha,-}^+ = \{ x\in \Lambda_{\alpha}^+\mid 0<x_1<\frac{\sqrt{1-\alpha^2}}{\alpha} \}}$ or in $\displaystyle{\Lambda_{\alpha,+}^+ = \{ x\in \Lambda_{\alpha}^+\mid x_1>\frac{\sqrt{1-\alpha^2}}{\alpha} \}}.$
Then  $\Sigma^n$ is locally diffeomorphic to $\mathbb{S}^{n}$.
\end{theorem}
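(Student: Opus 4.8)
The plan is to adapt the projection argument from the proofs of Theorems \ref{thm:02} and \ref{thm:01}, replacing the hyperbolic or cylindrical target by the round sphere $\mathbb{S}^n$. The first step is to make the product structure of $\Lambda_\alpha^+$ explicit. Writing $\psi=(\psi_1,\ldots,\psi_{n+3})$ with $u=\psi_1>0$, the de Sitter constraint $\langle\psi,\psi\rangle=1$ together with the defining relation $\psi_{n+3}=\alpha+\sqrt{1-\alpha^2}\,\psi_1$ of $\Lambda_\alpha^+$ yields, after substituting and using $\alpha^2+(1-\alpha^2)=1$, the identity
\[
\sum_{i=2}^{n+2}\psi_i^2 = \left(\alpha u - \sqrt{1-\alpha^2}\right)^2 = R(u)^2,
\]
where $R(s)=\alpha s-\sqrt{1-\alpha^2}$ is the function already appearing in the parametrization of $\mathfrak{F}(M)$. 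Thus the angular block $(\psi_2,\ldots,\psi_{n+2})$ sweeps out a round sphere of radius $|R(u)|$.

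Next I would introduce the candidate map $\Psi:\Sigma^n\to\mathbb{S}^n\subset\mathbb{R}^{n+1}$ by
\[
\Psi = \frac{1}{R(u)}\,(\psi_2,\ldots,\psi_{n+2}),
\]
which lands in $\mathbb{S}^n$ precisely because $\sum_{i=2}^{n+2}\psi_i^2=R(u)^2$. Differentiating and using $\mathbf{v}(R)=\alpha\,\mathbf{v}(u)$ together with $\sum_i\psi_i\,\mathbf{v}(\psi_i)=R\,\mathbf{v}(R)$ (obtained by differentiating the radius identity), a computation entirely parallel to the one in Theorem \ref{thm:01} gives
\[
\langle d\Psi_p(\mathbf{v}),d\Psi_p(\mathbf{z})\rangle_0 = \frac{1}{R(u)^2}\left(\sum_{i=2}^{n+2}\mathbf{v}(\psi_i)\,\mathbf{z}(\psi_i)-\mathbf{v}(R)\,\mathbf{z}(R)\right)
\]
for the standard round metric $\langle\,,\rangle_0$. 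On the other hand, substituting $\psi_{n+3}=\alpha+\sqrt{1-\alpha^2}\,u$ into the ambient Lorentzian form shows that the induced metric satisfies $\langle\mathbf{v},\mathbf{z}\rangle_\Sigma=\sum_{i=2}^{n+2}\mathbf{v}(\psi_i)\,\mathbf{z}(\psi_i)-\mathbf{v}(R)\,\mathbf{z}(R)$, the coefficient $-1+(1-\alpha^2)=-\alpha^2$ of $\mathbf{v}(u)\,\mathbf{z}(u)$ being exactly $-(\partial R/\partial u)^2$. Comparing the two identities yields the conformal relation
\[
\Psi^*\langle\,,\rangle_0 = \frac{1}{R(u)^2}\,\langle\,,\rangle_\Sigma.
\]

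To conclude, since $\langle\,,\rangle_\Sigma$ is positive definite and $1/R(u)^2>0$ wherever $R(u)\neq0$, the pullback $\Psi^*\langle\,,\rangle_0$ is positive definite, so $d\Psi_p$ is injective and hence, matching dimensions, an isomorphism; by the inverse function theorem $\Psi$ is a local diffeomorphism onto an open subset of $\mathbb{S}^n$, which is exactly the assertion that $\Sigma^n$ is locally diffeomorphic to $\mathbb{S}^n$. Because only a local statement is claimed, no completeness or growth bound such as \eqref{hyp:1} enters, and the argument terminates here.

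The main obstacle, and the sole reason for the case distinction in the hypothesis, is guaranteeing $R(u)\neq0$: both the normalization by $R(u)$ and the conformal factor $1/R(u)^2$ degenerate at $R(u)=0$, i.e. at $u=\sqrt{1-\alpha^2}/\alpha$, where the angular sphere collapses to the vertex. For $\alpha=1$ one has $R(u)=u>0$ and for $\alpha=0$ one has $R\equiv-1$, so no restriction is needed; for $0<\alpha<1$ the hypothesis that $\psi(\Sigma)$ lie in $\Lambda_{\alpha,-}^+$ (where $R(u)<0$) or in $\Lambda_{\alpha,+}^+$ (where $R(u)>0$) is precisely what keeps $R(u)$ of constant, nonzero sign, so that $\Psi$ is well defined and smooth throughout.
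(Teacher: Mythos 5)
Your proposal is correct and follows essentially the same route as the paper: the same normalized projection $\Psi=\frac{1}{R(u)}(\psi_2,\ldots,\psi_{n+2})$ onto $\mathbb{S}^n$, the same radius identity $\sum_{i=2}^{n+2}\psi_i^2=R(u)^2$, the same conformal relation $\Psi^*\langle\,,\rangle_{\mathbb{S}^n}=\frac{1}{R(u)^2}\langle\,,\rangle_\Sigma$, and the same observation that the case hypotheses for $0<\alpha<1$ serve only to keep $R(u)$ of constant nonzero sign. Your use of the differentiated radius identity $\sum_i\psi_i\,\mathbf{v}(\psi_i)=R\,\mathbf{v}(R)$ merely streamlines the cross-term cancellation that the paper carries out by explicit expansion, so no substantive difference exists.
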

\begin{proof}For every $p\in \Sigma$, $\psi(p)=(\psi_1(p), \psi_{2}(p), \ldots,  \psi_{n+3}(p)) \in \Lambda_\alpha^+$. Thus
$\psi_{n+3} =\alpha + \sqrt{ 1- \alpha^2}  \psi_1$ and
\[
-\psi_1^2(p) + \psi_2^2(p)+\cdots+\psi_{n+2}^2(p) + \psi_{n+3}^2(p)=1.
\] 
It follows that 
\[
\sum_{i=2}^{n+2} \psi_i^2 = R(\psi_1)^2>0
\qquad
\text{where} 
\quad 
R(\psi_1) = \alpha\,\psi_1  - \sqrt{ 1- \alpha^2}.
\]
\textcolor{black}{Observe that when $\alpha=0$, $R(\psi_1)=-1<0$ and when $\alpha=1$, $R(\psi_1)=\psi_1>0$ on $\Sigma$. On the other hand, when $0<\alpha<1$ it follows that $R(\psi_1)<0$ on $\Sigma$ if $\psi(\Sigma)\subset\Lambda^+_{\alpha,-}$, while $R(\psi_1)>0$ on $\Sigma$ if $\psi(\Sigma)\subset\Lambda^+_{\alpha,+}$.} 
%\textcolor{red}{A PARTIR DE AQU\'I NO ENTIENDO POR QU\'E $R(\psi_1)\neq 0$ Y EN GENERAL POR QU\'E $R(s)\neq 0$ con $s>0$. De hecho cuando $\alpha=0$ se tiene $R(s)=-1<0$ y cuando $\alpha=1$ se tiene $R(s)=s>0$ para todo $s>0$. Pero cuando $0<\alpha<1$ se tiene $R(s)<0$ para $0<s<s_0$, $R(s_0)=0$ y $R(s)>0$ para $s>s_0$. con $s_0=\sqrt{1-\alpha^2}/\alpha$. Entonces no entiendo por qu\'e podemos dividir por $R(\psi_1)$ y continuar con el razonamiento. Tampoco entiendo en el Example 6.8 por qu\'e el factor conforme $R(f)^2$ es positivo, por la misma raz\'on. Obviamente, $R(\psi_1)(p_0)=0$ en algun punto $p_0\in\Sigma$ si y solo si $\psi_2(p_0)=\psi_3(p_0)=\cdots=\psi_{n+2}(p_0)=0$, $\psi_1(p_0)=\sqrt{1-\alpha^2}/\alpha>0$ y $\psi_{n+3}(p_0)=1/\alpha$, de modo que $\psi(p_0)=(\sqrt{1-\alpha^2}/\alpha,0,0,\ldots,0,1/\alpha)\in\Lambda^+_\alpha$. La \'unica forma de solucionar este que se me ocurre, es incluir en las hip\'otesis que $R(\psi_1)>0$ o $R(\psi_1)<0$ en $\Sigma$, que es equivalente a pedir que $0<\psi_1<\sqrt{1-\alpha^2}/\alpha$ o que $\psi_1>\sqrt{1-\alpha^2}/\alpha$ cuando $0<\alpha<1$. Cuando $\alpha=1$ esto siempre se tiene porque en ese caso basta con $\psi_1>0$. En ese caso, en el Example 6.8 habria que pedir que $f:\mathbb{S}^n\to (0,\sqrt{1-\alpha^2}/\alpha)$ o que $f:\mathbb{S}^n\to (\sqrt{1-\alpha^2}/\alpha, +\infty)$. } 
\textcolor{black}{Therefore, in any case $R(\psi_1)^2>0$ on $\Sigma$ and we can define the map} $\Psi:\Sigma^n\to\mathbb{S}^{n}$ by
\[
\Psi(p)= \left( \frac{\psi_2(p)}{R(\psi_1)}, \ldots,   \frac{\psi_{n+2}(p)}{R(\psi_1)} \right).
\]

For every $p\in \Sigma$ and ${\bf v} \in T_p \Sigma$ we have
\[
d\Psi_p({\bf v})
=
\frac{1}{R(\psi_1)} \left({\bf v}(\psi_2), \ldots, {\bf v}(\psi_{n+2})\right) 
-
\frac{\alpha {\bf v}(\psi_1)}{R(\psi_1( p))^2} \left(\psi_2(p), \ldots,\psi_{n+2}(p)\right).
\]

Denote by $\langle\,,\rangle_{\mathbb{S}^n}$ the standard round metric of the sphere $\mathbb{S}^{n}$, then  for every ${\bf v, w} \in T_p\Sigma$ we obtain
\begin{align*}
\langle d\Psi_p({\bf v}), d\Psi_p({\bf w})\rangle_{\mathbb{S}^n} =
&\frac{1}{R(\psi_1)^2}\sum_{i=2}^{n+2}{\bf v}(\psi_i) {\bf w}(\psi_i)  
+\frac{\alpha^2 {\bf v}(\psi_1) {\bf w}(\psi_1)}{R(\psi_1)^4}\sum_{i=2}^{n+2}\psi^2_i(p)\\
& - \frac{\alpha}{R(\psi_1)^3}\left({\bf w}(\psi_1)\sum_{i=2}^{n+2} \psi_i{\bf v}(\psi_i)+{\bf v}(\psi_1)\sum_{i=2}^{n+2} \psi_i {\bf w}(\psi_i)\right) \\
&= 
\frac{1}{R(\psi_1)^2}
\left(
-\alpha^2  {\bf v}(\psi_1) {\bf w}(\psi_1)
+ 
\sum_{i=2}^{n+2}{\bf v}(\psi_i) {\bf w}(\psi_i)  
\right),
\end{align*}
since
\begin{eqnarray*}
{\bf w}(\psi_1)\sum_{i=2}^{n+2} \psi_i{\bf v}(\psi_i)+{\bf v}(\psi_1)\sum_{i=2}^{n+2} \psi_i {\bf w}(\psi_i) & = &
{\bf w}(\psi_1)\frac{1}{2}{\bf v}(\sum_{i=2}^{n+2}\psi^2_i)+{\bf v}(\psi_1)\frac{1}{2}{\bf w}(\sum_{i=2}^{n+2}\psi^2_i)\\
{} & = & {\bf w}(\psi_1)\frac{1}{2}{\bf v}(R(\psi_1)^2)+{\bf v}(\psi_1)\frac{1}{2}{\bf w}(R(\psi_1)^2)\\
{} & = & \alpha R(\psi_1){\bf v}(\psi_1){\bf w}(\psi_1).
\end{eqnarray*}
Then, using that
\[
{\bf v}(\psi_{n+3}){\bf w}(\psi_{n+3})={\bf v}(\psi_{1}){\bf w}(\psi_{1})-\alpha^2{\bf v}(\psi_{1}){\bf w}(\psi_{1}),
\]
we conclude that
\begin{align*}
\langle d\Psi_p({\bf v}), d\Psi_p({\bf w})\rangle_{\mathbb{S}^n}=& 
\frac{1}{R(\psi_1)^2}
\left( -{\bf v}(\psi_1){\bf w}(\psi_1) + \sum_{i=2}^{n+2} {\bf v}(\psi_i) {\bf w}(\psi_i)   +   { \bf v}(\psi_{n+3}) { \bf w}(\psi_{n+3}) \right) \\
=&  \frac{1}{R(\psi_1)^2} \langle d\psi_p({\bf v}),  d\psi_p({\bf w}) \rangle_{\mathbb{S}_1^{n+2}}  =  
\frac{1}{R(\psi_1)^2} \langle {\bf v},  {\bf w} \rangle_{\Sigma}.
\end{align*}
Hence 
\begin{equation}
\label{luisA1}
\Psi^*(\langle \,,\rangle_{\mathbb{S}^n}) = 
\frac{1}{R(\psi_1)^2} \langle \,,  \rangle_{\Sigma},
\end{equation}
which implies that $\Psi$ is a local diffeomorphism.   
\end{proof}

\textcolor{black}{In particular, and as a direct application of Theorem \ref{thm:GCone}, we obtain the following consequence, which extends Proposition 4.1 in \cite{ACR2}. Observe that Proposition 4.1 in \cite{ACR2} corresponds to the case $\alpha=1$, while our proposition below corresponds to the rest of the cases $0\leq \alpha<1$.}
\begin{proposition}
\label{prop:luisA6}
Let $\psi:  \Sigma^n \to \Lambda_\alpha^+ \subset \mathbb{S}_1^{n+2}$ be a codimension two  spacelike  submanifold that factors through $ \Lambda_\alpha^+$. \textcolor{black}{In the case where $0<\alpha<1$, assume further that $\psi(\Sigma)$ is contained either in $\displaystyle{\Lambda_{\alpha,-}^+ = \{ x\in \Lambda_{\alpha}^+\mid 0<x_1<\frac{\sqrt{1-\alpha^2}}{\alpha} \}}$ or in $\displaystyle{\Lambda_{\alpha,+}^+ = \{ x\in \Lambda_{\alpha}^+\mid x_1>\frac{\sqrt{1-\alpha^2}}{\alpha} \}}.$ Assume that $\Sigma$ is complete.
\begin{enumerate}
\item[(a)] If $\alpha=0$, then $\Sigma$ is compact and isometric to the round sphere $\mathbb{S}^n$.
\item[(b)] If $0<\alpha<1$ and $\psi(\Sigma)\subset\Lambda^+_{\alpha,-}$, then $\Sigma$ is compact and conformally diffeomorphic to the round sphere $\mathbb{S}^{n}$.
\item[(c)] If $0<\alpha<1$ and $\psi(\Sigma)\subset\Lambda^+_{\alpha,+}$,
and the  positive function $u=\psi_1=-\langle \psi, {\bf e}_{1} \rangle$ satisfies 
\begin{equation}
\label{luisA2}
u(p) \leq C r(p) \log(r(p)), \qquad r(p)\gg1,
\end{equation} 
where $C$ is a positive constant and $r$ denotes the Riemannian distance function from a fixed origin $o \in \Sigma$, then 
$\Sigma$ is compact and conformally diffeomorphic to the round sphere $\mathbb{S}^{n}$. In particular, this holds if $\sup_\Sigma u < +\infty$ and, more generally, if $\limsup_{r\to \infty} \frac{u}{r \log r} < +\infty$.
\end{enumerate}}
\end{proposition}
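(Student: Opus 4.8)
The plan is to derive all three statements directly from Theorem \ref{thm:GCone}, whose conclusion \eqref{luisA1} already furnishes a local diffeomorphism $\Psi:\Sigma^n\to\mathbb{S}^n$ satisfying $\Psi^*(\langle\,,\rangle_{\mathbb{S}^n})=R(\psi_1)^{-2}\langle\,,\rangle_\Sigma$, where $R(\psi_1)=\alpha\psi_1-\sqrt{1-\alpha^2}$. The only work left is to promote this local diffeomorphism to a global one, and the three cases are governed entirely by the behaviour of the conformal factor $R(\psi_1)^{-2}$, which is in turn dictated by the range of $\psi_1$ on each region. In every case the endgame is identical: once $\Psi$ is exhibited as a local isometry (or a distance-nondecreasing local diffeomorphism) out of a complete manifold, Lemma \ref{lem:coveringmap} forces $\Psi$ to be a covering map onto its image, that image is then open and complete, hence all of the connected manifold $\mathbb{S}^n$, and simple connectedness of $\mathbb{S}^n$ (for $n\geq2$) upgrades the covering to a global diffeomorphism; compactness of $\Sigma$ then follows from that of $\mathbb{S}^n$.

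For parts (a) and (b) no growth hypothesis is needed, precisely because $\psi_1$ stays in a bounded range and the conformal factor is bounded away from $0$. When $\alpha=0$ one has $R(\psi_1)\equiv-1$, so $\Psi$ is an honest local isometry for the induced metric $\langle\,,\rangle_\Sigma$; applying Lemma \ref{lem:coveringmap} to the complete manifold $(\Sigma,\langle\,,\rangle_\Sigma)$ yields that $\Psi$ is a global isometry onto $\mathbb{S}^n$, which proves (a). When $0<\alpha<1$ and $\psi(\Sigma)\subset\Lambda^+_{\alpha,-}$, the constraint $0<\psi_1<\sqrt{1-\alpha^2}/\alpha$ gives $R(\psi_1)\in(-\sqrt{1-\alpha^2},0)$, hence $R(\psi_1)^2<1-\alpha^2<1$ and $R(\psi_1)^{-2}>1$. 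Thus $\|d\Psi_p({\bf v})\|_{\mathbb{S}^n}>\|{\bf v}\|_\Sigma$, so $\Psi$ strictly increases distances, and Lemma \ref{lem:coveringmap} applied once more to $(\Sigma,\langle\,,\rangle_\Sigma)$ produces the global conformal diffeomorphism of (b).

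For part (c) the situation is genuinely different: on $\Lambda^+_{\alpha,+}$ the coordinate $\psi_1$ is unbounded, $R(\psi_1)=\alpha\psi_1-\sqrt{1-\alpha^2}\to+\infty$, and the conformal factor $R(\psi_1)^{-2}$ degenerates to $0$, so $\Psi$ need not increase distances and completeness of the conformal metric has to be argued separately. This is exactly the mechanism of Theorems \ref{thm:02} and \ref{thm:01}: I would apply Lemma \ref{lem:complete} with $\lambda=R(\psi_1)^{-(n-2)/2}$, so that $\lambda^{4/(n-2)}=R(\psi_1)^{-2}$ is the conformal factor and $\lambda^{2/(n-2)}=R(\psi_1)^{-1}$. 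The hypothesis \eqref{luisA2}, namely $u\leq C\,r\log r$ with $u=\psi_1$, gives $R(\psi_1)\leq\alpha C\,r\log r$ for $r\gg1$, whence $\lambda^{2/(n-2)}\geq C'/(r\log r)$, which is precisely the hypothesis of Lemma \ref{lem:complete}. Therefore $\widetilde{\langle\,,\rangle}=R(\psi_1)^{-2}\langle\,,\rangle_\Sigma$ is complete, $\Psi:(\Sigma,\widetilde{\langle\,,\rangle})\to\mathbb{S}^n$ is a local isometry out of a complete manifold, and the common endgame above yields the global conformal diffeomorphism. The closing ``in particular'' clause is then immediate, since both $\sup_\Sigma u<+\infty$ and, more generally, $\limsup_{r\to\infty} u/(r\log r)<+\infty$ plainly imply \eqref{luisA2}.

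The main obstacle here is not any single computation but the correct bookkeeping of the sign and size of $R(\psi_1)$ across the three regimes, which is exactly what separates the cases needing no hypothesis (where $\psi_1$ is bounded, the conformal factor stays bounded below, and the distance-increasing property is automatic) from the single regime where $\psi_1$ is unbounded and one must feed the growth bound \eqref{luisA2} into Lemma \ref{lem:complete} to recover completeness. Once this dichotomy is recognized, each case collapses to one of the two standard completeness-plus-covering arguments already employed in this paper, which is why no separate proof is warranted beyond citing Theorem \ref{thm:GCone}.
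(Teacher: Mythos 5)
Your proposal is correct and follows essentially the same route as the paper: case (a) via $R\equiv-1$ and Lemma \ref{lem:coveringmap}, case (b) via the conformal factor being bounded below by $1/(1-\alpha^2)>1$, and case (c) by feeding \eqref{luisA2} into Lemma \ref{lem:complete} (with the conformal factor $R(\psi_1)^{-2}$) and then running the same covering-map endgame as in Theorems \ref{thm:02} and \ref{thm:01}. The only immaterial difference is in case (b), where you invoke the distance-increasing hypothesis of Lemma \ref{lem:coveringmap} directly on $(\Sigma,\langle\,,\rangle_\Sigma)$, whereas the paper first deduces completeness of the conformal metric from $\widetilde{\langle\,,\rangle}>\frac{1}{1-\alpha^2}\langle\,,\rangle_\Sigma$ and then applies the lemma to the resulting local isometry---two phrasings of the same argument.
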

\begin{proof} %\textcolor{red}{DEMOSTRACION NUEVA. POR FAVOR, REVISAR COMPLETAMENTE}
From the proof of Theorem \ref{thm:GCone}, we already know from \eqref{luisA1} that the map $\Psi:(\Sigma^n,\widetilde{\langle,\rangle})\to (\mathbb{S}^n,\langle,\rangle_{\mathbb{S}^n})$ is a local isometry, where 
\[
\widetilde{\langle,\rangle}=\frac{1}{R(u)^2}\langle,\rangle_\Sigma \quad \text{, with } u=\psi_1.
\]
Recall also that the metric $\langle,\rangle_\Sigma$ is complete on $\Sigma$, by hypothesis. 

In the case $\alpha=0$, $R(u)=-1$. Therefore, $\widetilde{\langle,\rangle}=\langle,\rangle_\Sigma$ is a complete metric on $\Sigma$ and the map 
$\Psi:(\Sigma^n,\langle,\rangle_\Sigma)\to (\mathbb{S}^n,\langle,\rangle_{\mathbb{S}^n})$
is a local isometry which, by Lemma \ref{lem:coveringmap}, is in fact a covering map and hence a global isometry.

In the case where $0<\alpha<1$ and $\psi(\Sigma)\subset\Lambda^+_{\alpha,-}$ we have $\displaystyle{0<u<\frac{\sqrt{1-\alpha^2}}{\alpha}}$ and hence
$\displaystyle{-\sqrt{1-\alpha^2}<R(u)<0}$. In this case, the conformal metric $\widetilde{\langle,\rangle}$ on $\Sigma$ satisfies 
\[
\widetilde{\langle,\rangle}=\frac{1}{R(u)^2}\langle,\rangle_\Sigma>
\frac{1}{1-\alpha^2}\langle,\rangle_\Sigma,
\]
which implies that $\widetilde{\langle,\rangle}$ is also a complete metric on $\Sigma$. Then, in this case the map $\Psi:(\Sigma^n,\langle,\rangle_\Sigma)\to (\mathbb{S}^n,\langle,\rangle_{\mathbb{S}^n})$
is also a local isometry which, by Lemma \ref{lem:coveringmap}, is in fact a covering map and hence a global isometry.

Finally, in the case $0<\alpha<1$ and $\psi(\Sigma)\subset\Lambda^+_{\alpha,+}$ we have $\displaystyle{u>\frac{\sqrt{1-\alpha^2}}{\alpha}}$ and hence
$\displaystyle{R(u)=\alpha u-\sqrt{1-\alpha^2}>0}$.  Therefore, by \eqref{luisA2} the positive function $R(u)$ also satisfies
\[
R(u)(p)\leq C r(p) \log(r(p)), \qquad r(p)\gg1,
\]
and by Lemma \ref{lem:complete} the conformal metric $\widetilde{\langle,\rangle}$ is also complete on $\Sigma^n$, and using again Lemma \ref{lem:coveringmap} we conclude that $\Psi$ is a global diffeomorphism, as in the proof of our previous Theorem \ref{thm:02} and Theorem \ref{thm:01}. 
\end{proof}

\begin{corollary}
Let $\psi:  \Sigma^n \to \Lambda_\alpha^+ \subset \mathbb{S}_1^{n+2}$ be a codimension two compact spacelike  submanifold that factors through $ \Lambda_\alpha^+$. \textcolor{black}{In the case where $0<\alpha<1$, assume further that $\psi(\Sigma)$ is contained either in $\displaystyle{\Lambda_{\alpha,-}^+ = \{ x\in \Lambda_{\alpha}^+\mid 0<x_1<\frac{\sqrt{1-\alpha^2}}{\alpha} \}}$ or in $\displaystyle{\Lambda_{\alpha,+}^+ = \{ x\in \Lambda_{\alpha}^+\mid x_1>\frac{\sqrt{1-\alpha^2}}{\alpha} \}}.$} Then 
$\Sigma^n$ is conformally diffeomorphic to the round sphere $\mathbb{S}^{n}$.
\end{corollary}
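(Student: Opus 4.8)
The plan is to obtain this corollary as an immediate specialization of Proposition \ref{prop:luisA6} to the compact setting. The essential observation is that compactness subsumes both hypotheses needed in that proposition: a compact Riemannian manifold is complete by the Hopf--Rinow theorem, and every continuous function on it is bounded.

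First I would recall from Theorem \ref{thm:GCone} that the map $\Psi:\Sigma^n\to\mathbb{S}^n$ is a local diffeomorphism satisfying $\Psi^*(\langle\,,\rangle_{\mathbb{S}^n})=R(\psi_1)^{-2}\langle\,,\rangle_\Sigma$, with $R(\psi_1)=\alpha\,\psi_1-\sqrt{1-\alpha^2}$, so that it only remains to promote $\Psi$ to a global diffeomorphism. Since $\Sigma$ is compact it is complete, and hence the completeness hypothesis of Proposition \ref{prop:luisA6} holds automatically.

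Next I would split into the cases of that proposition. For $\alpha=0$ (case (a)) and for $0<\alpha<1$ with $\psi(\Sigma)\subset\Lambda_{\alpha,-}^+$ (case (b)), no growth condition is required: $R(\psi_1)$ is bounded away from zero, the conformal metric $\widetilde{\langle\,,\rangle}=R(\psi_1)^{-2}\langle\,,\rangle_\Sigma$ is complete, and Proposition \ref{prop:luisA6} already yields that $\Sigma$ is conformally diffeomorphic---indeed isometric when $\alpha=0$---to $\mathbb{S}^n$. The only case invoking a hypothesis on $u=\psi_1$ is $0<\alpha<1$ with $\psi(\Sigma)\subset\Lambda_{\alpha,+}^+$ (case (c)), which requires the bound \eqref{luisA2}. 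Here compactness does the work: the positive function $u=\psi_1$ is continuous on the compact manifold $\Sigma$, hence attains a maximum, so that $\sup_\Sigma u<+\infty$. This is precisely the sufficient condition recorded at the end of Proposition \ref{prop:luisA6}(c); consequently \eqref{luisA2} holds trivially and the same conclusion follows.

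There is no genuine analytic obstacle here, since the covering-map argument (via Lemma \ref{lem:coveringmap}) and the completeness of the conformal metric (via Lemma \ref{lem:complete}) were already carried out in the proof of Proposition \ref{prop:luisA6}; the corollary is essentially bookkeeping. The one point deserving a sentence of care is the verification that compactness furnishes the boundedness of $u$ used in case (c)---that is, that the growth hypothesis \eqref{luisA2} is vacuous for compact $\Sigma$---after which all three cases collapse to the single statement that $\Psi$ is a conformal diffeomorphism onto $\mathbb{S}^n$.
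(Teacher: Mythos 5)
Your proposal is correct and is essentially the paper's own (implicit) proof: the paper states this corollary without a separate argument, precisely because—as you observe—compactness gives completeness via Hopf--Rinow and renders the growth hypothesis \eqref{luisA2} automatic (indeed $\sup_\Sigma u<+\infty$), so all cases of Proposition \ref{prop:luisA6} apply at once. The only caveat worth recording is that the corollary's statement formally also allows $\alpha=1$, a case Proposition \ref{prop:luisA6} deliberately excludes; that case follows either from the identical argument applied directly to Theorem \ref{thm:GCone} (there $R(u)=u>0$, and compactness again makes the conformal metric $R(u)^{-2}\langle\,,\rangle_\Sigma$ complete) or from the citation to Corollary 4.1 of \cite{ACR2}.
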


\begin{example}
\label{ex:luisA3}
%\textcolor{red}{EJEMPLO NUEVO. POR FAVOR, REVISAR COMPLETAMENTE}
Given a positive smooth function $f: \mathbb{S}^{n}\to (0,\infty)$ we can construct an embedding $\psi_{f,\alpha} : \mathbb{S}^n\to \Lambda_\alpha^+ \subset \mathbb{S}_1^{n+2}$, with $\alpha=0$ or $\alpha=1$, by setting 
\[
\psi_{f, \alpha} (p) = 
(f(p), R(f(p)) p , \alpha + \sqrt{1-\alpha^2} f(p)) ,
\]
where $R(s)=\alpha s-\sqrt{ 1- \alpha^2}$. In other words,
\[
\psi_{f,0}(p)=(f(p),-p,f(p))\in\Lambda^+_0 \quad \text{and} \quad 
\psi_{f,1}(p)=(f(p),f(p)p,1)\in\Lambda^+_1.
\]
The case $\alpha=1$ corresponds to \cite[Example 4.1]{ACR2} and it satisfies 
\[
\langle,\rangle=\psi_{f,1}^* (\langle \,,\rangle) = f^2 \langle\,, \rangle_{\mathbb{S}^n},
\]
which means that $\psi_{f,1} $ determines a spacelike immersion of $\mathbb{S}^n$ into $\Lambda_{1}^+$ whose induced metric is conformal to the standard metric of the round sphere with conformal factor $f$.

The case $\alpha=0$ is new. In that case, For each  ${\bf v}\in T_p \mathbb{S}^n$ we have
\[
d(\psi_{f,0})_p ({\bf v}) = 
({\bf v}(f), -{\bf v}, {\bf v}(f))
\]
and 
\[
\langle
d(\psi_{f,0})_p ({\bf v}),d(\psi_{f,0})_p ({\bf w})
\rangle=\langle {\bf v}, {\bf w}\rangle_{\mathbb{S}^n}.
\]
Hence
\[
\langle,\rangle=\psi_{f,0}^*(\langle \,,\rangle)=\langle\,, \rangle_{\mathbb{S}^n},
\]
which means that $\psi_{f,0} $ determines a spacelike isometric immersion of the round sphere $\mathbb{S}^n$ into $\Lambda_{0}^+$. 
\end{example}

\begin{example}
\label{ex:luisA4}
%\textcolor{red}{EJEMPLO NUEVO. POR FAVOR, REVISAR COMPLETAMENTE}
Let $0<\alpha<1$. For a positive smooth function $$\displaystyle{f:\mathbb{S}^{n}\to \left(0,\frac{\sqrt{1-\alpha^2}}{\alpha}\right)}$$ we can construct an embedding $\psi_{f,\alpha} : \mathbb{S}^n\to \Lambda_{\alpha,-}^+ \subset \mathbb{S}_1^{n+2}$ by setting 
\[
\psi_{f, \alpha} (p) = 
(f(p), R(f(p)) p , \alpha + \sqrt{1-\alpha^2} f(p)) ,
\]
where $R(f)=\alpha f-\sqrt{ 1- \alpha^2}<0$. Observe that $\langle \psi_{f, \alpha} (p) , \psi_{f, \alpha} (p)   \rangle=1$ for every $p\in \mathbb{S}^n$. For each  ${\bf v} \in T_p \mathbb{S}^n$ we have
\[
d(\psi_{f, \alpha})_p ({\bf v}) = 
({\bf v}(f) , \alpha {\bf v}(f) p +R(f(p)) {\bf v},  \sqrt{1-\alpha^2} {\bf v}(f))
\]
and 
\[
\langle
d(\psi_{f, \alpha})_p ({\bf v}),d(\psi_{f, \alpha})_p ({\bf w}\rangle
= R(f(p))^2 \langle {\bf v}, {\bf w}\rangle_{\mathbb{S}^n }.
\]
Hence
\[
\langle \,,\rangle=\psi_{f,\alpha}^* (\langle \,,\rangle) = 
(R\circ f)^2 \langle\,, \rangle_{\mathbb{S}^n}=
(\alpha f-\sqrt{ 1- \alpha^2})^2 \langle\,, \rangle_{\mathbb{S}^n},
\]
which means that $\psi_{f,\alpha} $ determines a spacelike immersion of $\mathbb{S}^n$ into $\Lambda_{\alpha,-}^+$ whose induced metric is conformal to the standard metric of the round sphere, with conformal factor $-R(f)=\sqrt{ 1- \alpha^2}-\alpha f>0$.
\end{example}

\begin{example}
\label{ex:luisA5}
%\textcolor{red}{EJEMPLO NUEVO. POR FAVOR, REVISAR COMPLETAMENTE}
Let $0<\alpha<1$. Similarly as in Example \ref{ex:luisA4}, for a positive smooth function $$\displaystyle{f:\mathbb{S}^{n}\to \left(\frac{\sqrt{1-\alpha^2}}{\alpha},+\infty\right)}$$ we can construct an embedding $\psi_{f,\alpha} : \mathbb{S}^n\to \Lambda_{\alpha,+}^+ \subset \mathbb{S}_1^{n+2}$ by setting 
\[
\psi_{f, \alpha} (p) = 
(f(p), R(f(p)) p , \alpha + \sqrt{1-\alpha^2} f(p)) ,
\]
where $R(f)=\alpha f-\sqrt{ 1- \alpha^2}>0$. Computing as in Example \ref{ex:luisA4}, we can easily check that
\[
\langle \,,\rangle=\psi_{f,\alpha}^* (\langle \,,\rangle) = 
(R\circ f)^2 \langle\,, \rangle_{\mathbb{S}^n}=
(\alpha f-\sqrt{ 1- \alpha^2})^2 \langle\,, \rangle_{\mathbb{S}^n},
\]
which means that $\psi_{f,\alpha} $ determines a spacelike immersion of $\mathbb{S}^n$ into $\Lambda_{\alpha,+}^+$ whose induced metric is conformal to the standard metric of the round sphere, with conformal factor $R(f)=\alpha f-\sqrt{ 1- \alpha^2}>0$.
\end{example}

%\textcolor{red}{ENUNCIADOS Y DEMOSTRACIONES NUEVAS. POR FAVOR, REVISAR COMPLETAMENTE}

In our next result, and as a direct consequence of Proposition \ref{prop:luisA6} we obtain that every codimension two compact spacelike submanifold contained in $\Lambda^+_0$ is, up to a global isometry, as in Example \ref{ex:luisA3}. When $\alpha=1$, the corresponding result for the case of submanifolds contained in $\Lambda^+_1$ is Corollary 4.1 in \cite{ACR2}.
\begin{theorem}  
\label{thm:luisA7}
Let $\psi:\Sigma^n \to \Lambda_0^+ \subset \mathbb{S}_1^{n+2}$ be a codimension two compact spacelike submanifold that factors through $\Lambda_0^+$.  Then there exists a global isometry 
$\Psi:(\Sigma^n, \langle \,,\rangle) \to  (\mathbb{S}^n, \langle \,,\rangle_{\mathbb{S}^n})$  such that
\[
\Psi^* (\langle \,, \rangle)=\langle \, ,\rangle_{\mathbb{S}^{n}},
\]
and $\psi = \psi_{f,0} \circ \Psi$  where  $f = u\circ \Psi^{-1} : \mathbb{S}^{n} \to (0,+\infty)$, with $u=\psi_1=-\langle \psi, {\bf e}_1 \rangle>0$, and $\psi_{f,0}: \mathbb{S}^n \to \Lambda_0^+ \subset \mathbb{S}_1^{n+2}$ is the embedding 
\[
\psi_{f,0} (p) = (f(p), -p , f(p)). 
\]
\[
\xymatrix{
\Sigma^n  
\ar[r]^-{u}
& (0,+\infty)
\\
\mathbb{S}^n 
\ar[ru]_-{f}   
\ar[u]^-{\Psi^{-1}} 
} 
\qquad  
\qquad 
\xymatrix{
\Sigma^n  
\ar[r]^-{\psi} 
\ar[d]_-{\Psi}  & \Lambda_0^+\subset \mathbb{S}_1^{n+2} \\
 \mathbb{S}^n \ar[ru]_-{\psi_{f,\alpha}}   
} 
\]
In particular, the immersion $\psi$ is an embedding.
\end{theorem}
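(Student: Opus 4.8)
The plan is to specialize the construction in the proof of Theorem~\ref{thm:GCone} to the case $\alpha=0$ and then verify the factorization by a direct coordinate comparison. First I would note that when $\alpha=0$ the function $R(\psi_1)$ degenerates to the constant $-1$, so the map $\Psi\colon\Sigma^n\to\mathbb{S}^n$ of Theorem~\ref{thm:GCone} reduces to
\[
\Psi(p)=\bigl(-\psi_2(p),\ldots,-\psi_{n+2}(p)\bigr),
\]
and the conformal identity \eqref{luisA1} becomes $\Psi^*(\langle\,,\rangle_{\mathbb{S}^n})=\langle\,,\rangle_\Sigma$; that is, $\Psi$ is a local isometry with trivial conformal factor. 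Since $\Sigma$ is compact it is in particular complete, so Proposition~\ref{prop:luisA6}(a) applies and upgrades this same $\Psi$ to a global isometry onto $\mathbb{S}^n$. This is the step carrying the real content, and it rests on Lemma~\ref{lem:coveringmap} together with the simple connectedness of $\mathbb{S}^n$.

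Once $\Psi$ is a global isometry, its inverse $\Psi^{-1}$ is well defined and I would set $f=u\circ\Psi^{-1}$, which is smooth and strictly positive because $u=\psi_1>0$ on $\Lambda_0^+$. The remaining task is the pointwise identity $\psi=\psi_{f,0}\circ\Psi$, for which I would exploit the two constraints that $\psi(p)\in\Lambda_0^+$ imposes, namely $\psi_{n+3}(p)=\psi_1(p)=u(p)$ and $\sum_{i=2}^{n+2}\psi_i^2(p)=R(\psi_1)^2=1$. Writing $q=\Psi(p)$ one has $-q=(\psi_2(p),\ldots,\psi_{n+2}(p))$ and $f(q)=u(p)$, whence
\[
\psi_{f,0}(q)=\bigl(f(q),-q,f(q)\bigr)=\bigl(\psi_1(p),\psi_2(p),\ldots,\psi_{n+2}(p),\psi_{n+3}(p)\bigr)=\psi(p),
\]
matching the first, middle, and last blocks of coordinates in turn. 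Finally, since $\Psi$ is a diffeomorphism and $\psi_{f,0}$ is an embedding (as verified in Example~\ref{ex:luisA3}), the composition $\psi=\psi_{f,0}\circ\Psi$ is itself an embedding, which gives the concluding assertion.

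I expect no genuine obstacle: the content of the theorem is precisely the sharp consequence of the degeneration $R(\psi_1)\equiv-1$, which turns the conformal diffeomorphism of Theorem~\ref{thm:GCone} into an honest isometry, so that the function $u$ now plays the role of the ``radius'' function $f$ in the target embedding. The only point demanding care is bookkeeping of the sign in the definition of $\Psi$ -- its components are $-\psi_i$, which is exactly why $\psi_{f,0}$ carries $-p$ in its middle slot -- together with a consistent matching of all coordinate blocks.
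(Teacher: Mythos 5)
Your proposal is correct and follows essentially the same route as the paper: specializing the map $\Psi$ of Theorem \ref{thm:GCone} to $\alpha=0$ (where $R(\psi_1)\equiv-1$ makes $\Psi$ a local isometry), invoking Proposition \ref{prop:luisA6}(a) via Lemma \ref{lem:coveringmap} to get a global isometry, and then verifying $\psi=\psi_{f,0}\circ\Psi$ with $f=u\circ\Psi^{-1}$ by the same blockwise coordinate comparison. The sign bookkeeping and the use of the constraints $\psi_{n+3}=\psi_1$ and $\sum_{i=2}^{n+2}\psi_i^2=1$ are exactly as in the paper's argument.
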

\begin{proof}
We consider the map $\Psi$ as in the proof of Theorem \ref{thm:GCone} and Proposition \ref{prop:luisA6}, and recall that in this situation 
$\Psi:(\Sigma^n,\langle,\rangle_\Sigma)\to(\mathbb{S}^{n},\langle,\rangle_{\mathbb{S}^n})$ is a global isometry.
Let $f= u \circ \Psi^{-1}$. Then $f\circ \Psi = u$ and 
\begin{align*}
(\psi_{f,0} \circ \Psi)(p) &=
\left( f(\Psi(p)), -\Psi(p), f(\Psi(p)) \right)\\
&= (u(p), -\Psi(p), u(p))\\
&=( \psi_1(p), \psi_2(p), \ldots, \psi_{n+2}(p), \psi_1(p)) \\
&= \psi(p).
\end{align*}
\end{proof}

%\textcolor{red}{ENUNCIADOS Y DEMOSTRACIONES NUEVAS. POR FAVOR, REVISAR COMPLETAMENTE}

Finally, Proposition \ref{prop:luisA6} also provides us with the result corresponding to Theorem \ref{thm:luisA7} but for the remaining cases where $0<\alpha<1$.
\begin{theorem}  
Let $\psi:  \Sigma^n \to \Lambda_\alpha^+ \subset \mathbb{S}_1^{n+2}$ be a codimension two compact spacelike submanifold that factors through $\Lambda_\alpha^+$.  \textcolor{black}{Assume further that $\psi(\Sigma)$ is contained either in $\displaystyle{\Lambda_{\alpha,-}^+ = \{ x\in \Lambda_{\alpha}^+\mid 0<x_1<\frac{\sqrt{1-\alpha^2}}{\alpha} \}}$ or in $\displaystyle{\Lambda_{\alpha,+}^+ = \{ x\in \Lambda_{\alpha}^+\mid x_1>\frac{\sqrt{1-\alpha^2}}{\alpha} \}}$}. 
Then there exists a conformal diffeomorphism $\Psi: (\Sigma^n, \langle \,,\rangle) \to  (\mathbb{S}^n, \langle \,,\rangle_{\mathbb{S}^n }) $  such that
\[
\Psi^* (\langle \,, \rangle) = \frac{1}{R(u)^2} \langle \, ,\rangle_{\mathbb{S}^{n}},
\qquad
R(u) = \alpha \, u - \sqrt{ 1- \alpha^2},
\]
with $u=\psi_1=-\langle \psi, {\bf e}_1 \rangle$  and $\psi = \psi_{f,\alpha} \circ \Psi$  where  $f=u\circ\Psi^{-1} : \mathbb{S}^{n} \to (0,+\infty)$ and $\psi_{f,\alpha}: \mathbb{S}^n \to \Lambda_\alpha^+ \subset \mathbb{S}_1^{n+2}$ is the embedding 
\[
\psi_{f, \alpha} (p) = 
(f(p), R(f(p))\, p , \alpha + \sqrt{1-\alpha^2} f(p)). 
\]
\[
\xymatrix{
\Sigma^n  
\ar[r]^-{u}
& (0,+\infty)
\\
\mathbb{S}^n 
\ar[ru]_-{f}   
\ar[u]^-{\Psi^{-1}} 
} 
\qquad  
\qquad 
\xymatrix{
\Sigma^n  
\ar[r]^-{\psi} 
\ar[d]_-{\Psi}  & \Lambda_\alpha^+\subset \mathbb{S}_1^{n+2} \\
 \mathbb{S}^n \ar[ru]_-{\psi_{f,\alpha}}   
} 
\]
In particular, the immersion $\psi$ is an embedding.
Observe that when $\psi(\Sigma)\subset\Lambda_{\alpha,-}^+$ then  $\displaystyle{f=u\circ \Psi^{-1}:\mathbb{S}^{n}\to \left(0,\frac{\sqrt{1-\alpha^2}}{\alpha}\right)}$ (as in Example \ref{ex:luisA4}) and when $\psi(\Sigma)\subset\Lambda_{\alpha,+}^+$ then  $\displaystyle{f=u\circ \Psi^{-1}:\mathbb{S}^{n}\to \left(\frac{\sqrt{1-\alpha^2}}{\alpha},+\infty\right)}$ (as in Example \ref{ex:luisA5}).
\end{theorem}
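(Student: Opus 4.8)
The plan is to run the $0<\alpha<1$ analogue of the argument used for Theorem \ref{thm:luisA7}, the only difference being that $\Psi$ will now be a genuinely \emph{conformal} (rather than isometric) diffeomorphism, with conformal factor governed by $R(u)=\alpha u-\sqrt{1-\alpha^2}$ instead of by $u$ itself. First I would recall the map $\Psi:\Sigma^n\to\mathbb{S}^n$ constructed in the proof of Theorem \ref{thm:GCone}, namely
\[
\Psi(p)=\left(\frac{\psi_2(p)}{R(\psi_1)},\ldots,\frac{\psi_{n+2}(p)}{R(\psi_1)}\right),
\]
together with the identity \eqref{luisA1}, which gives $\Psi^*\langle\,,\rangle_{\mathbb{S}^n}=R(u)^{-2}\langle\,,\rangle_\Sigma$ and hence shows that $\Psi$ is a local isometry for the conformal metric $\widetilde{\langle\,,\rangle}=R(u)^{-2}\langle\,,\rangle_\Sigma$.

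Next I would promote $\Psi$ to a global diffeomorphism. Since $\Sigma$ is compact it is in particular complete and $u=\psi_1$ is bounded on $\Sigma$, so the growth condition \eqref{luisA2} required in the case $\psi(\Sigma)\subset\Lambda^+_{\alpha,+}$ holds automatically, while in the case $\psi(\Sigma)\subset\Lambda^+_{\alpha,-}$ no growth hypothesis is needed. Therefore Proposition \ref{prop:luisA6}(b)--(c) applies and, through Lemma \ref{lem:coveringmap} and the simple connectivity of $\mathbb{S}^n$ (here $n\geq 2$), yields that $\Psi$ is a global conformal diffeomorphism onto $\mathbb{S}^n$. The metric relation displayed in the statement is then merely a restatement of \eqref{luisA1}.

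Finally I would set $f=u\circ\Psi^{-1}:\mathbb{S}^n\to(0,+\infty)$, so that $f\circ\Psi=\psi_1$, and verify the factorization $\psi=\psi_{f,\alpha}\circ\Psi$ by direct substitution. Evaluating $\psi_{f,\alpha}$ at $\Psi(p)$ and using $R(f(\Psi(p)))=R(\psi_1(p))$ gives $R(\psi_1(p))\,\Psi(p)=(\psi_2(p),\ldots,\psi_{n+2}(p))$ straight from the definition of $\Psi$, while the defining relation $\psi_{n+3}=\alpha+\sqrt{1-\alpha^2}\,\psi_1$ of $\Lambda_\alpha^+$ reproduces the last coordinate; hence $\psi_{f,\alpha}(\Psi(p))=\psi(p)$. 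Since $\psi_{f,\alpha}$ is an embedding (as checked in Examples \ref{ex:luisA4} and \ref{ex:luisA5}) and $\Psi$ is a diffeomorphism, the immersion $\psi$ is an embedding.

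I do not anticipate a serious obstacle, as the computation is structurally identical to that of Theorem \ref{thm:luisA7}; the one point demanding care is the sign and non-vanishing of $R(u)$, which is negative on $\Lambda^+_{\alpha,-}$ and positive on $\Lambda^+_{\alpha,+}$. Keeping track of this sign is what guarantees that $\Psi$ is well defined (division by $R(\psi_1)\neq 0$), that the correct branch of $f$ lands in $\left(0,\tfrac{\sqrt{1-\alpha^2}}{\alpha}\right)$ or in $\left(\tfrac{\sqrt{1-\alpha^2}}{\alpha},+\infty\right)$ as in Examples \ref{ex:luisA4} and \ref{ex:luisA5}, and that the conformal factor is correctly identified as $R(f)^2$.
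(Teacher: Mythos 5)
Your proposal is correct and follows essentially the same route as the paper: reuse the map $\Psi$ and the conformal identity \eqref{luisA1} from Theorem \ref{thm:GCone}, invoke Proposition \ref{prop:luisA6} to upgrade $\Psi$ to a global conformal diffeomorphism onto $\mathbb{S}^n$, then set $f=u\circ\Psi^{-1}$ and verify $\psi=\psi_{f,\alpha}\circ\Psi$ coordinate by coordinate, the last coordinate coming from the defining relation $\psi_{n+3}=\alpha+\sqrt{1-\alpha^2}\,\psi_1$ of $\Lambda_\alpha^+$. Your explicit observation that compactness makes $u$ bounded and hence discharges the growth hypothesis \eqref{luisA2} in the $\Lambda^+_{\alpha,+}$ case is a detail the paper leaves implicit in its appeal to Proposition \ref{prop:luisA6}, but the argument is the same.
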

\begin{proof}
We consider $\Psi$ as in the proof of Theorem \ref{thm:GCone} and Proposition \ref{prop:luisA6} and recall that in this situation 
$\Psi:(\Sigma^n,\langle,\rangle_\Sigma)\to(\mathbb{S}^{n},\langle,\rangle_{\mathbb{S}^n})$ is a conformal diffeomorphism with 
\[
\Psi^*(\langle \,,\rangle_{\mathbb{S}^n}) = 
\frac{1}{R(u)^2} \langle \,,  \rangle_{\Sigma},
\]
with $u=\psi_1$. Let $f= u \circ \Psi^{-1}$. Then $f\circ \Psi = u$ and 
\begin{align*}
(\psi_{f, \alpha} \circ \Psi)(p) &=
\left( f(\Psi(p)), R(f(\Psi(p))) \, \Psi(p), \alpha+ \sqrt{1-\alpha^2} f(\Psi(p)) \right)\\
&= (u(p) , R(u) \Psi(p), \alpha+ \sqrt{1-\alpha^2} u(p))\\
&=
( u(p), \psi_2(p), \ldots, \psi_{n+2}(p) ,  \alpha+ \sqrt{1-\alpha^2} u(p)) \\
&= \psi(p).
\end{align*}
\end{proof}

\section*{Acknowledgements}
J. Mel\'endez acknowledges the support of PEAPDI 2025 grant, CBI-UAMI and SECIHTI SNII 165260. He wants to thank the hospitality of D. A. Solis and M. Navarro while visiting them at UADY. He is also very grateful to L.J. Al\'{\i}as and the Department of Mathematics of the University of Murcia for the hospitality and support. M. Navarro acknowledges the support of SECIHTI SNII 25997. D. A. Solis acknowledges the support of SECIHTI SNII 38368.

L.J. Al\'{\i}as and Didier A. Solis are  partially supported 
by the grant PID2021-124157NB-I00 funded by 
MCIN/AEI/10.13039/501100011033/ `ERDF A way of making Europe',
Spain.

\bibliographystyle{amsplain}

\section{Appendix}

The following technical lemma establishes a relation between the scalar curvatures of conformally related metrics. For the sake of completeness and for the reader's convenience, we include a detailed proof.

\begin{lemma} 
\label{lem:conformal}
Let $\Sigma^n$ be a Riemannian manifold endowed with conformally related metrics $\langle \, , \rangle$ and $\widetilde{\langle \, , \rangle} = \lambda^2 \langle \, , \rangle$. Let $E_i$ and $E_j $ be an $ \langle \, , \rangle$-orthonormal vectors. Then their sectional curvatures $\widetilde{K}$ and $K$, of the plane $\sigma$ generated by $E_i$ and $E_j$, are related by
\begin{align}
\label{eq:sect_curvatures}
\lambda^4 \widetilde{K}(\sigma)=&
 \lambda^2 K(\sigma)
 + 2 \left( (E_i(\lambda))^2 + (E_j(\lambda))^2\right) 
 - \lambda ( 
 \mathop\mathrm{Hess} \lambda (E_i,E_i)+ 
 \mathop\mathrm{Hess} \lambda (E_j,E_j)) \nonumber \\
 &- \| \nabla \lambda\|^2        
\end{align}
where  $\|  \cdot \|$, $\nabla$ and $\mathop\mathrm{Hess}$ refer to the norm, gradient and Hessian for the metric  $\langle \, , \rangle$.  In particular, the scalar curvatures are related by
\[
\lambda^2 \widetilde{\mathop\mathrm{Scal}} = \mathop\mathrm{Scal} -2(n-1) \frac{\Delta \lambda}{\lambda}- (n-1)(n-4) \frac{\|\nabla \lambda\|^2}{ \lambda^2}
\]
where the Laplacian operator $\Delta$ is with respect to the metric  $\langle \, , \rangle$. 
\end{lemma}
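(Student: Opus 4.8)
The plan is to follow the two-tier structure of the statement: first prove the pointwise sectional curvature identity \eqref{eq:sect_curvatures} for an arbitrary $\langle\,,\rangle$-orthonormal pair $E_i,E_j$, and then deduce the scalar curvature relation by tracing over a $\langle\,,\rangle$-orthonormal frame. Everything hinges on the transformation of the Levi-Civita connection under the conformal change, so I would begin there. Denoting by $\widetilde\nabla$ and $\nabla$ the connections of $\widetilde{\langle\,,\rangle}=\lambda^2\langle\,,\rangle$ and $\langle\,,\rangle$, the Koszul formula for $\widetilde{\langle\,,\rangle}$ compared with the one for $\langle\,,\rangle$ yields
\[
\widetilde\nabla_XY=\nabla_XY+\frac{X(\lambda)}{\lambda}Y+\frac{Y(\lambda)}{\lambda}X-\frac{\langle X,Y\rangle}{\lambda}\nabla\lambda,
\qquad X,Y\in\mathfrak{X}(\Sigma).
\]
It is convenient to put $\varphi=\log\lambda$, so that this reads $\widetilde\nabla_XY=\nabla_XY+(X\varphi)Y+(Y\varphi)X-\langle X,Y\rangle\nabla\varphi$, keeping the subsequent curvature computation as clean as possible.

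Next I would substitute this into $\widetilde R(X,Y)Z=\widetilde\nabla_X\widetilde\nabla_YZ-\widetilde\nabla_Y\widetilde\nabla_XZ-\widetilde\nabla_{[X,Y]}Z$ and expand, collecting the terms into the classical expression for $\widetilde R$ in terms of $R$, $\mathrm{Hess}\,\varphi$, $\nabla\varphi$ and $\|\nabla\varphi\|^2$. Evaluating on the $\langle\,,\rangle$-orthonormal pair and using $\widetilde{\langle E_i,E_i\rangle}\,\widetilde{\langle E_j,E_j\rangle}-\widetilde{\langle E_i,E_j\rangle}^2=\lambda^4$ together with $\widetilde{\langle\widetilde R(E_i,E_j)E_j,E_i\rangle}=\lambda^2\langle\widetilde R(E_i,E_j)E_j,E_i\rangle$, the sectional curvature $\widetilde K(\sigma)$ is found to satisfy
\[
\lambda^2\widetilde K(\sigma)=K(\sigma)-\mathrm{Hess}\,\varphi(E_i,E_i)-\mathrm{Hess}\,\varphi(E_j,E_j)+(E_i\varphi)^2+(E_j\varphi)^2-\|\nabla\varphi\|^2.
\]
To recover \eqref{eq:sect_curvatures} I would then convert back to $\lambda$ via $\mathrm{Hess}\,\varphi=\lambda^{-1}\mathrm{Hess}\,\lambda-\lambda^{-2}\,d\lambda\otimes d\lambda$, $(E_i\varphi)^2=\lambda^{-2}(E_i\lambda)^2$ and $\|\nabla\varphi\|^2=\lambda^{-2}\|\nabla\lambda\|^2$, and multiply through by $\lambda^2$; the $\lambda^{-2}\,d\lambda\otimes d\lambda$ part of the Hessian conversion merges with the $(E_i\varphi)^2$ terms to produce the coefficient $2$ in front of $(E_i\lambda)^2+(E_j\lambda)^2$, yielding \eqref{eq:sect_curvatures} exactly.

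Finally, for the scalar curvature I would divide \eqref{eq:sect_curvatures} by $\lambda^4$ and sum over all ordered pairs $i\neq j$ in a $\langle\,,\rangle$-orthonormal frame $\{E_1,\dots,E_n\}$. Since sectional curvature depends only on the plane $\sigma$, the $\widetilde{\langle\,,\rangle}$-orthonormal frame $\{\lambda^{-1}E_i\}$ spans the same planes, so that $\widetilde{\mathrm{Scal}}=\sum_{i\neq j}\widetilde K(E_i,E_j)$. Using
\[
\sum_{i\neq j}K(E_i,E_j)=\mathrm{Scal},\quad
\sum_{i\neq j}\mathrm{Hess}\,\lambda(E_i,E_i)=(n-1)\Delta\lambda,\quad
\sum_{i\neq j}(E_i\lambda)^2=(n-1)\|\nabla\lambda\|^2,
\]
and $\sum_{i\neq j}\|\nabla\lambda\|^2=n(n-1)\|\nabla\lambda\|^2$, the gradient terms assemble with coefficient $4(n-1)-n(n-1)=-(n-1)(n-4)$, and multiplying by $\lambda^2$ delivers the stated identity. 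The only laborious step is the curvature expansion of the second paragraph; the conversion $\varphi\mapsto\lambda$ and the final trace are routine bookkeeping, but the emergence of the factor $(n-4)$ in place of the more familiar $(n-2)$ is the one subtlety to watch, and it comes precisely from the single, unscaled $-\|\nabla\lambda\|^2$ term in \eqref{eq:sect_curvatures}.
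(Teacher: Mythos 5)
Your proposal is correct, and every formula you assert along the way matches the paper's, but you reach them by a genuinely different route. The paper does not derive the curvature transformation at all: it quotes the conformal-change formula for the full curvature tensor (Proposition 3.9 in Sakai's book), $\overline{R}=\lambda^2 R + h\odot\widetilde{\langle\,,\rangle}$ with $h=\mathrm{Hess}(\log\lambda)-d(\log\lambda)\otimes d(\log\lambda)+\tfrac12\|\nabla\log\lambda\|^2\langle\,,\rangle$, and its proof consists of evaluating the Kulkarni--Nomizu product on the pair $(E_i,E_j)$, which gives $-\lambda^2\left(h(E_i,E_i)+h(E_j,E_j)\right)$, followed by the same $\log\lambda\to\lambda$ conversion you perform. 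You instead derive the transformation of the Levi-Civita connection from the Koszul formula and expand $\widetilde{R}$ directly from its definition; this is self-contained (no external citation), but it pushes the real work into the curvature expansion that you describe rather than execute --- that expansion is precisely the content of the cited Sakai formula, so your plan is sound, just more laborious. Your intermediate identity $\lambda^2\widetilde K(\sigma)=K(\sigma)-\mathrm{Hess}\,\varphi(E_i,E_i)-\mathrm{Hess}\,\varphi(E_j,E_j)+(E_i\varphi)^2+(E_j\varphi)^2-\|\nabla\varphi\|^2$ with $\varphi=\log\lambda$ is exactly what the paper's evaluation of $h$ yields, and your conversion to \eqref{eq:sect_curvatures} is the same bookkeeping. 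On the scalar curvature you actually supply more detail than the paper, which dispatches the trace with ``in particular'': your sum over ordered pairs, with $\sum_{i\neq j}\mathrm{Hess}\,\lambda(E_i,E_i)=(n-1)\Delta\lambda$ and the coefficient count $4(n-1)-n(n-1)=-(n-1)(n-4)$, is correct, and your closing observation that the factor $(n-4)$ (rather than the familiar $(n-2)$ of the $\log\lambda$ formulation) stems from the single unscaled $-\|\nabla\lambda\|^2$ term in \eqref{eq:sect_curvatures} accurately pinpoints the one coefficient a reader might distrust.
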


\begin{proof}
We use the formula for conformal metrics (see Proposition 3.9 in \cite{sakai})
\begin{equation*}
\overline{R}=\lambda^2 R + \left(
\mathop\mathrm{Hess} (\log \lambda)  -  d(\log \lambda)  \otimes d(\log \lambda)  +\frac{1}{2}\| \nabla \log \lambda\|^2\langle \, , \rangle \right)\odot \overline{\langle \, , \rangle}
\end{equation*}
where $R$ and $\overline{R}$ denote the curvature tensors of $\langle \,,\rangle$
and $\lambda^2 \langle \,,\rangle$ respectively\footnote{Recall that, for the tensors $h$ ,$k$ of type $(0,2)$, the Kulkarni-Nomizu product $\odot$ is defined by 
\[
(h\odot k )(x,y,z,w)= h(x,z)k(y,w) + h(y,w) k(x,z) - h(x,w) k (y,z) -h(y,z) k(x,w).\]
}
Note that if we set 
\[
h:=\mathop\mathrm{Hess} (\log \lambda)  
-  d(\log \lambda)  \otimes d(\log \lambda)  
+\frac{1}{2}\| \nabla \log \lambda\|^2 \, \langle \, , \rangle
\]
 then
\[
(h\odot \widetilde{\langle \, , \rangle}) (E_i,E_j,E_j,E_i) = -\lambda^2 ( h(E_i,E_i) + h(E_j,E_j)).
\]
First, we  consider the terms of $h(E_i,E_i)$ separately, noting that
\[
\mathop\mathrm{Hess} (\log \lambda) (E_i,E_i) = \langle \nabla_{E_i} \nabla \log \lambda, E_i \rangle =\frac{1}{\lambda} \mathop\mathrm{Hess} \lambda(E_i,E_i)-\frac{1}{\lambda^2} (E_i(\lambda))^2
\]
and 
\[
 d(\log \lambda)  \otimes d(\log \lambda) (E_i,E_i) = \frac{(E_i(\lambda))^2}{\lambda^2},
 \qquad \quad
\frac{1}{2} \| \nabla \log \lambda\|^2= 
\frac{1}{2\lambda^2} \|\nabla \lambda \|^2.
\]
On the other hand, since the sectional curvatures satisfy
\[
K = \langle R(E_i,E_j)E_j, E_i \rangle
\qquad \text{and}
\qquad 
\lambda^4 \widetilde{K} = \langle \overline{R}(E_i,E_j)E_j, E_i \rangle
\]
we have, after a straightforward calculation
\begin{align*}
\lambda^4 \widetilde{K}  
=\lambda^2 K+2( (E_i(\lambda))^2+(E_j(\lambda))^2) -\lambda\left( \mathop\mathrm{Hess} \lambda(E_i,E_i)+\mathop\mathrm{Hess}\lambda(E_j,E_j) \right) - \|\nabla \lambda\|^2.   
\end{align*}
\end{proof}

\begin{remark}
For $n=2$, we obtain
\[
\lambda^2 \widetilde{\mathop\mathrm{Scal}} = \mathop\mathrm{Scal} -2 \frac{\Delta \lambda}{\lambda}+ 2 \frac{\|\nabla \lambda\|^2}{ \lambda^2} = \mathop\mathrm{Scal} - 2\Delta \log \lambda.
\] 

Since $\mathop\mathrm{Scal} =2K$,  the Gaussian curvatures $K$ and $\widetilde{K}$ are related by
\begin{equation}
    \label{eq:gauss_conformal}
    \lambda^2 \widetilde{K} = K- \Delta \log \lambda.
\end{equation}
\end{remark}

\end{document}